\documentclass[11 pt, reqno]{article}
\usepackage{amsmath} 
\usepackage{amssymb}
\usepackage{amsthm}
\usepackage{cite}
\usepackage{graphicx, color}
\usepackage{xspace}
\usepackage[a4paper, left=2cm, right=2cm, top=2 cm, bottom= 2 cm]{geometry}
\usepackage[colorlinks=true,hyperindex=true]{hyperref}
\usepackage{comment}

\newtheorem{ccounter}{ccounter}[section]
\newtheorem{thm}[ccounter]{Theorem}
\newtheorem{lem}[ccounter]{Lemma}
\newtheorem{cor}[ccounter]{Corollary}
\newtheorem{defn}[ccounter]{Definition}
\newtheorem{prop}[ccounter]{Proposition}
\newtheorem{ass}[ccounter]{Assumption}
\newtheorem{ex}[ccounter]{Example}

\numberwithin{equation}{section}

\def\cH{{\mathcal H}}

\def\cG{{\mathcal G}}

\newcommand{\La}{\Lambda}
\newcommand{\eps}{\varepsilon}

\newcommand{\be}{\begin{equation}}
\newcommand{\ee}{\end{equation}}
\newcommand{\ben}{\be\nonumber}
\newcommand{\ind}[1]{\textbf{1}_{#1}}

\newcommand{\ga}{{\gamma}}

\newcommand{\la}{\lambda}
\newcommand{\Om}{{\Omega}}
\newcommand{\om}{{\omega}}

\renewcommand{\th}{\theta}

\setlength{\unitlength}{1cm}

\renewcommand{\P}{\mathbb{P}}
\newcommand{\E}{\mathbb{E}}
\newcommand{\R}{\mathbb{R}}
\newcommand{\C}{\mathbb{C}}
\newcommand{\N}{\mathbb{N}}

\def\fb{\mathfrak{b}}
\def \fm{\mathfrak{m}}
\def \fw{\mathfrak{w}}
\def\frb{\mathfrak{b}}
\def\frc{\mathfrak{c}}
\def\gXY{\gamma^{X \boxplus Y}}
\def\hatgam{\hat{\gamma}}
\def\gamhat{\hat{\gamma}}
\def\rhosc{\rho_{\mathrm{sc}}}
\def\Gkap{\mathcal{G}_\kappa}
\def\F{\mathcal{F}}
\def\bbE{\mathbb{E}}


\newcommand{\abs}[1]{\lvert #1 \rvert}
\newcommand{\absa}[1]{\left\lvert #1 \right\rvert}
\newcommand{\norm}[1]{\lVert #1 \rVert}
\newcommand{\norma}[1]{\left\lVert #1 \right\rVert}


\DeclareMathOperator{\diag}{diag}
\DeclareMathOperator{\tr}{Tr}
\DeclareMathOperator{\var}{Var}
\DeclareMathOperator{\supp}{supp}

\DeclareMathOperator{\re}{Re}
\DeclareMathOperator{\im}{Im}

\DeclareMathOperator{\dd}{d}
\DeclareMathOperator{\ii}{i}


\def\bet{\begin{thm}}
	\def\eet{\end{thm}}
\def\bel{\begin{lem}}
	\def\eel{\end{lem}}
\def\bas{\begin{ass}}
	\def\eas{\end{ass}}
\def\bec{\begin{cor}}
	\def\eec{\end{cor}}
\def\bed{\begin{defn}}
	\def\eed{\end{defn}}
\def\bep{\begin{prop}}
	\def\eep{\end{prop}}
\def\beq{\begin{equation}}
\def\eeq{\end{equation}}
\def\bea{\begin{equation*}}
\def\eea{\end{equation*}}
\def\tr{\mathrm{tr}}
\def\bex{\begin{ex}}
	\def\eex{\end{ex}}

\def\1{\boldsymbol{1}}
\def\Im{\mathrm{Im}}
\def\Re{\mathrm{Re}}
\def\e{\mathrm{e}}
\def\i{\mathrm{i}}
\def\del{\partial}
\def\d{\mathrm{d}}
\def\eps{\varepsilon}
\def\om{\omega}
\def\fa{\mathfrak{a}}

\def\hatxi{\hat{\xi}}
\def\O{\mathcal{O}}

\def\G{\mathcal{G}}
\def\hatu{\hat{u}}
\def\hatB{\hat{B}}
\def\UB{\mathcal{U}^{\hat{B}}}

\def\A{\mathcal{A}}
\def\hatz{\hat{z}}

\def\fae{\mathfrak{a}}
\def\mfb{\mathfrak{b}}
\def \cx{\mathfrak{c}_\textsc{x}}


\title{Local spectral statistics of addition of random matrices}
\author{Ziliang Che \and Benjamin Landon}
\date{}


\begin{document}

\begin{table}
\centering
\begin{tabular}{c}
\multicolumn{1}{c}{\Large{\bf Local spectral statistics of the addition of random matrices}}\\
\\
\\
\end{tabular}
\begin{tabular}{c c  c}
Ziliang Che &   &  Benjamin Landon\\
\\
\multicolumn{3}{c}{ \small{Department of Mathematics} } \\
 \multicolumn{3}{c}{ \small{Harvard University} } \\
 \\
\small{zche@math.harvard.edu} & & \small{landon@math.harvard.edu}  \\
\\
\end{tabular}
\\
\begin{tabular}{c}
\multicolumn{1}{c}{\today}\\
\\
\end{tabular}

\begin{tabular}{p{15 cm}}
\small{{\bf Abstract:} We consider the local statistics of $H = V^* X V + U^* Y U$ where $V$ and $U$ are  independent Haar-distributed unitary matrices, and $X$ and $Y$ are deterministic real diagonal matrices.   In the bulk, we prove that the gap statistics and correlation functions coincide with the GUE in the limit when the matrix size $N \to \infty$ under mild assumptions on $X$ and $Y$.  Our method relies on running a carefully chosen diffusion on the unitary group and comparing the resulting eigenvalue process to Dyson Brownian motion.  Our method also applies to the case when $V$ and $U$ are drawn from the orthogonal group.  Our proof relies on the local law for $H$ proved in \cite{BES16,BES16a,BES16b} as well as the DBM convergence results of \cite{LY, fixed}. }
\end{tabular}

\end{table}

{
\hypersetup{linkcolor=black}
}

\newpage
\section{Introduction}
In this paper we study the spectrum of matrices formed by adding two large $N \times N$ `generic' Hermitian matrices,
\begin{align*}
A+B.
\end{align*}
A natural probabilistic way of interpreting  `generic' is to take a matrix with a general empirical eigenvalue measure and conjugate it by a random unitary matrix.  We are led to the model
\begin{align*}
H = V^* X V + U^* Y U,
\end{align*}
where $X$ and $Y$ are deterministic real diagonal $N \times N$ matrices and $U$ and $V$ are independently drawn from the Haar measure on the unitary group.


The limiting global eigenvalue density of $H=V^*XV+U^*YU$ was first obtained by Voicelescu in the influential work \cite{Vo91}, in which he proved that the normalized empirical distribution converges weakly to the free convolution of the limiting empirical laws of $X$ and $Y$. 
 This result was then subsequently  obtained via several different approaches in \cite{Bi98, Co03, PV00, Sp93}.  The first result going beyond weak convergence was that of Kargin \cite{Ka12} who showed that under suitable assumptions on $X$ and $Y$, that convergence holds not only at the global scale, but on the scale $( \log(N) )^{-\frac{1}{2}}$.   Kargin later improved this result to the scale $N^{-\frac{1}{7}}$ in \cite{Ka15}.  

  In a series of works, Bao, Erd{\"o}s and Schnelli \cite{BES16, BES16a, BES16b} established the important result that in the bulk of the spectrum, the empirical eigenvalue distribution of $H$ converges to a deterministic quantity down to the optimal scale $N^{-1 +\nu}$, using a specific decomposition of the Haar measure and a sophisticated analysis of the Green's function.  Moreover, by implementing a new fluctuation averaging mechanism, they were able to obtain the optimal error rate in \cite{BES16a}.  Results of this form in random matrix theory are known as {\it local laws}. 
  



The results of \cite{BES16,BES16a,BES16b} control the eigenvalue behaviour down to the scale $N^{-1+\nu} \gg N^{-1}$.  In the bulk of the spectrum the eigenvalue spacing is of order $N^{-1}$ and the behaviour of the eigenvalues at this scale remains unstudied.  In this paper we wish to investigate the local statistics of the eigenvalues of $H$ at the scale $N^{-1}$  by determining the limiting distribution of the eigenvalues gaps as well as the limiting correlation functions.  

One of the central tenets of random matrix theory is the Wigner-Dyson-Mehta universality conjecture.  This conjecture asserts that the local statistics for wide classes of random matrix ensembles exhibit universality, in that the local statistics do not depend on the underlying details of the matrix ensemble, but only on the symmetry class (real symmetric vs. complex Hermitian) of the ensemble.  In particular, in the limit $N \to \infty$, the local statistics should coincide with the Gaussian Orthogonal and Unitary ensembles (GOE/GUE) for which explicit formulas are known.

Wigner matrices are perhaps the most fundamental class of random matrix ensembles, and form a natural class on which to study the bulk universality conjecture.  A Wigner matrix is constructed by taking the entries to be independent (up to the Hermitian $H=H^*$ constraint) centered random variables with variance $1/N$.  Bulk universality for Wigner matrices was established in a series of works \cite{EKYY12,EKYY13, EPR10, ESY11, EY15, EYY12, wigfixed};  parallel results were established for special cases in \cite{TV10,TV11}. 

In addition to proving universality for Wigner matrices, the works \cite{EKYY12,EKYY13, EPR10, ESY11, EY15, EYY12, wigfixed} established universality for the generalized Wigner class as well as the adjacency matrices of certain classes of random graphs.  Moreover, these works established a robust three-step strategy to proving universality for random matrix ensembles.  

The success of this three-step strategy is seen in the recent progress in random matrix theory of proving universality for various matrix ensembles.  Going beyond the class of generalized Wigner matrices, universality has been proven for the adjacency matrices of sparse random graphs \cite{BHKY15,HLY15,AC15,BKY},  random matrices of general Wigner type \cite{ajanki1,ajanki2,ajanki3}, matrices with correlated entries \cite{AEK16,Ch16}, deformed Wigner ensembles \cite{schnelli1,schnelli2} as well as a class of random band matrices \cite{band}.  

Our main result is to prove bulk universality for $H$, under mild assumptions on $X$ and $Y$.  We prove that in the limit $N \to \infty$ the gap statistics and correlation functions coincide with those of the GUE when $V$ and $U$ are drawn from the unitary group, and those of the GOE when they are drawn from the orthogonal group.  

Previous works on universality have relied heavily on Dyson Brownian motion (DBM).  DBM is a stochastic process on random matrices which leaves the GOE/GUE invariant; Dyson's seminal calculation \cite{Dyson} shows that under this flow, the eigenvalues satisfy a closed system of stochastic differential equations.  DBM was first used to prove universality of Wigner matrices in the works \cite{localrelax1,ESY11}, by showing that the time to local equilibrium is $t \sim N^{-1}$ when the initial data of the process is a Wigner matix.  Later, this local equilibrium was established in the strong fixed energy sense in \cite{wigfixed} for Wigner matrices.  Recently, the works \cite{ES,LY,fixed} have gone beyond the Wigner class, and showed that the time to local equilibrium is order $N^{-1}$ for a wide class of initial data.  The role played by the works \cite{ES,LY,fixed} in the recent progress on bulk universality is that, after proving a local law for a given ensemble, the works \cite{ES,LY,fixed} immediately yield bulk universality for the original ensemble at the expense of adding a small Gaussian component.  This small Gaussian component is then removed using a perturbation argument exploiting the matrix structure based around either It{\^o}'s lemma \cite{QUE} or a Green's function comparison theorem.

In the ensemble considered here, the initial local law estimate is a consequence of the works \cite{BES16,BES16a,BES16b} and so one could attempt to proceed by applying \cite{ES,LY,fixed}.  
However, the perturbation methods usually used to remove the Gaussian component fail in the case considered here.  The matrix $H$ lacks the ``Wigner-type'' structure of previously considered models, and the addition of a Gaussian component is a singular perturbation which does not respect the structure of the ensemble.

Instead, we exploit the symmetry of our model, the translation invariance of the Haar measure.  We take $U(t)$ to be a diffusion process on the unitary group and define
\begin{align*}
H (t) := V^* X V + U (t)^* Y U(t).
\end{align*}
Note that by the translation invariance of the Haar measure, $H(t)$ has the same eigenvalue distribution as $H$ for any choice of $U(t)$ independent from $V$.  By carefully choosing the weights of the diffusion $U(t)$ in certain directions, we derive a stochastic differential equation for the eigenvalues,
\beq \label{eqn:pertdbm}
	\dd \la_i = \frac{\dd B_i}{\sqrt{N}} +\frac{\beta}{2N}\sum_{j\neq i} \frac{1+o(1)}{\la_i-\la_j}\dd t + o(1)\dd t
\eeq
which can be view as a perturbed version of the usual DBM process, starting from the same initial data,
\ben
	\dd \mu_i = \frac{\dd B_i}{\sqrt{N}}+ \frac{\beta}{2N}\sum_{j\neq i} \frac{1}{\mu_i-\mu_j} \dd t , \qquad \mu_i (0) = \lambda_i (0) .
\ee
The local law for $H$ provides a priori estimates for the error terms arising in \eqref{eqn:pertdbm}.  In particular, for bulk eigenvalues, the optimal estimates are provided by the results of \cite{BES16,BES16a,BES16b}.  However, our approach to analyzing \eqref{eqn:pertdbm} requires at least a weak global bound valid for all eigenvalues, for which the results of \cite{BES16,BES16a,BES16b} do not apply.  A crucial component of our work is establishing a local law as well as stability estimates down to a scale $N^{ -c}$ for some $c>0$, which is valid throughout the entire spectrum of $H$.

With these a priori estimates in hand we compare \eqref{eqn:pertdbm} to the usual DBM.  The work \cite{wigfixed} introduced the important idea of {\it coupling} two Dyson Brownian motions.  We use this idea and 
set the Brownian motions appearing in the two systems of SDEs equal to each other.  As observed in \cite{wigfixed}, the difference $\lambda_i - \mu_i$ then satisfies a discrete parabolic equation.  In our case there is a forcing term and we have zero initial data.  Using parabolic equation techniques, we are  able to prove that at later times $t$ the difference $\lambda_i (t) - \mu_i (t)$ is $o ( N^{-1})$.  Hence, the local eigenvalue statistics of $H$ can instead be computed from the usual DBM process started from an ensemble related to $H$.  The main result of \cite{fixed} then says that the local statistics of this DBM coincide with that of the corresponding Gaussian ensemble.

The well-posedness of Dyson Brownian motion is non-trivial and in fact $\beta=1$ is in some sense critical for this system.  For $\beta \geq 1$, the eigenvalues do not collide under the DBM flow, whereas for $\beta <1$ they do, and the system is not well-posed.  In the case $\beta =1 $ there is therefore difficulty in establishing that the $\lambda_i (t)$ coming from $H (t)$ satisfy the equation \eqref{eqn:pertdbm}, as the $o(1)$ appearing above the repulsive $1/ | \lambda_i - \lambda_j |$ interaction term in fact comes with a minus sign.  This means that effectively $\beta <1$, and therefore it is nontrivial to justify \eqref{eqn:pertdbm}.  We resolve this by adding a tiny Gaussian component to $X$ which results in level repulsion bounds, making the terms on the RHS of \eqref{eqn:pertdbm} integrable.  This allows us to prove that the $\lambda_i$ in fact are a strong solution to \eqref{eqn:pertdbm}.

We outline the rest of the paper. In Section \ref{sec: main}, we define the model, introduce the assumptions, then sketch the proof of the main theorem.  In Section \ref{sec: estimates} we prove some estimates that will be used later on.  In Section \ref{sec: DBM} we analysis the SDE of the eigenvalues within a short time and prove the main result. In Section \ref{sec: wpness} we prove the well-posedness of the SDE.

\vspace{5 pt}
\noindent{\bf Acknowledgements.} The authors would like to thank H.-T. Yau and P. Sosoe for useful and enlightening discussions. 

\section{Model and main results}\label{sec: main}

\subsection{Definition of model and assumptions}\label{subsec: ass}

We consider $H=V^*XV+U^*YU$ where $X=\diag(x_1,\cdots,x_N)$ and $Y=\diag(y_1,\cdots,y_N)$ are deterministic diagonal matrices while $V$ and $U$ are unitary matrices independently drawn from the unitary group equipped with the Haar measure. Denote by $\la_1\leq \cdots\leq \la_N$ the eigenvalues of $H$.

We assume that there is a positive universal constant $K>0$ such that 
\be
	\sup_{1\leq k\leq N}\abs{x_k}\vee\abs{y_k}\leq K\,.
\ee
Denote $\mu_{1,N}:=\frac{1}{N} \sum_k \delta_{x_k}$ and $\mu_{2,N}:= \frac{1}{N}\sum_k \delta_{y_k}$. We assume that, as $N\to\infty$, the discrete measures $\mu_{1,N}$ and $\mu_{2,N}$ converge weakly to probability measures $\mu_1$ and $\mu_2$, respectively, which are supported on the bounded interval $[-K, K]$.  We assume that $\mu_2$ has an continuous density, i.e., there is a continuous function $\rho_2$ such that
\ben
	\mu_2(\dd y)=\rho_2(y)\dd y\,.
\ee
For technical reasons, we require $\mu_2$ to behave `more or less' like the square root edge of the semicircle law. In particular, we assume that there are constants $\mathfrak{c}>0$, $\delta_0>0$ such that for any $\xi\in\supp \mu_2$ and $0 \leq h \leq \delta_0$,
\be\label{edge behavior}
\mu_2([\xi-h,\xi+h]) \geq h^{2-\mathfrak{c}}\,.
\ee
To illustrate the meaning of this assumption, a typical example is that, if $\mu_2$ is of the form $\mu_2(\dd x) \asymp \sqrt{[(a-x)(x-b)]_+}\dd x$ for some $a<b$, then $\mu_2$ satisfies the assumption with any $\mathfrak{c}\in(0,1/2)$ and small enough $\delta_0>0$. Moreover, we assume that either $\mu_1$ or $\mu_2$ has a bounded Stieltjes transform.

In addition to weak convergence, we impose stronger regularity of $\mu_Y$.  Denote the $k$-th $N$-quantile of $\mu_1$ by $x_k^\star$ and the $k$-th $N$-quantile of $\mu_2$ by $y^\star_k$, i.e.,
\be\label{def: quantile}
	x^\star_k = \inf\{s: \int_{-\infty}^{s} \mu_1(\dd x) = k/N\}\,,\quad y^\star_k = \inf\{s: \int_{-\infty}^{s} \mu_2(\dd y) = k/N\}\,.
\ee
One important observation is that the $y^\star_k$'s do not get too close to each other because of the boundedness of the density of $\mu_2$. In particular, for any $k,l\leq N$, we have
\be\label{quantile space}
	\abs{y^\star_k-y^\star_l}\geq \frac{\abs{k-l}}{\norm{\rho_2}_\infty N}\,.
\ee
We assume that $y_k$ is not too far from $y_k^\star$, in the sense that for any $c>0$, 
\be\label{reg y}
	\sup_{0\leq k\leq N} \abs{y_k-y^\star_k} \leq N^{-1+c}\,, \text{ for large enough $N$}\,.
\ee
This condition can be relaxed, for example, by allowing a small number of $y_k$'s to violate the above inequality, i.e., near the spectral edges of $Y$. However, in this paper we refrain from exploring the optimal condition, for the transparency of argument.  Condition \eqref{reg y} together with \eqref{quantile space} yields, for any $c>0$ and large enough $N$,
\be\label{y space}
	\abs{y_k-y_l}\geq \frac{\abs{k-l}-N^c}{\norm{\rho_2}_\infty N} \,.
\ee
This bound is useful when $\abs{k-l}> N^c$.

We also impose regularity of $\mu_X$, slightly stronger than weak convergence. We assume that there is a constant $\cx>0$ such that 
\beq \label{reg x}
\left| \frac{1}{N} \sum_{i=1}^N \frac{1}{ x_i - E - \i \eta } - \int \frac{ \d \mu_1 (x) } { x - E - \i \eta } \right| \leq N^{ - \cx }
\eeq
for $\eta \geq N^{ - \cx }$.


\begin{remark} The condition \eqref{reg y} is used only to prove \eqref{y space} as well as a polynomial speed of convergence of the Stieltjes transform of a matrix closely related to $Y$ for $\eta \geq N^{ - c}$ for some $c >0$ (c.f., Proposition \ref{prop: m1-fm1}).  The estimate \eqref{y space} often holds under weaker assumptions than \eqref{reg y} (i.e., near the spectral edges where eigenvalues have wider spacing) and the result of Proposition \ref{prop: m1-fm1} is easy to check in practice.  We have refrained from exploring optimal conditions on $Y$ for transparency of the argument.
\end{remark}

\subsection{Main results}
It is known that as $N\to\infty$, the empirical law $\mu_N:= \frac{1}{N} \sum_k \delta_{\la_k}$ converges to the free additive convolution of $\mu_1$ and $\mu_2$. We denote the free convolution of $\mu_1$ and $\mu_2$ by 
\be
	\mu:=\mu_1 \boxplus \mu_2\,.
\ee
A more precise definition will appear later. 
We denote its density by $\rho$ and its classical eigenvalue locations by $\gamma_i$.  We denote the $k$-point correlation functions of $H$ by $p^{(k)}_H$ and by $p^{(k)}_G$ those of the corresponding Gaussian ensemble (GOE for $\beta=1$ and GUE for $\beta=2$).  

\bet \label{thm:bu} Let $X=\diag\{x_1,\cdots,x_N\}$, $Y=\diag\{y_1,\cdots,y_N\}$ where $(x_k)$ and $(y_k)$ satisfy the assumptions in Section \ref{subsec: ass}. Let  $H=V^*XV+U^*YU$ where $V$ and $U$ are independently drawn from the Haar measure on the unitary group $ \mathcal{U}_N$ (or the orthogonal group $\mathcal{O}_N$). Let $I = (a, b)$ be an interval on which the density $\rho$ of $\mu$ is strictly positive.

Then, for each $E \in I$ we have bulk universality.  For any smooth test function $O$,
\begin{align}
\bigg| &\int O ( \alpha_1, \cdots \alpha_k ) p^{(k)}_H \left( E + \frac{ \alpha_1}{N \rho (E) } , \cdots ,  E + \frac{ \alpha_k}{N \rho (E) } \right) \d \alpha \notag\\
- &\int O ( \alpha_1, \cdots \alpha_k ) p^{(k)}_G \left( E' + \frac{ \alpha_1}{N \rhosc (E') } , \cdots ,  E' + \frac{ \alpha_k}{N \rhosc (E') } \right) \d \alpha \bigg| \leq N^{ - c}
\end{align}
for some $c>0$ and any $E' \in (-2, 2)$.
Let $i$ be an index such that $\gamma_i \in (a + \kappa, b - \kappa )$ for some fixed $\kappa >0$.  We then have gap universality at the index $i$.
\begin{align}
 \bigg|  &\bbE^{(H)} \left[ O ( N \rho (\gamma_i ) (\lambda_{i+1} - \lambda_i ), \cdots ,N \rho (\gamma_{i} ) (\lambda_{i+k} - \lambda_{i+k-1} )  \right]  \notag\\
 - &\bbE^{(G)} \left[ O ( N \rho (\gamma_{j, \mathrm{sc}} ) (\lambda_{j+1} - \lambda_j ), \cdots ,N \rho (\gamma_{j, \mathrm{sc}} ) (\lambda_{j+k} - \lambda_{j+k-1} )  \right] \bigg| \leq N^{-c}
\end{align}
where $\bbE{(H/G)}$ denotes expectation w.r.t. $H$ or the corresponding Gaussian ensemble, respectively.  Here $j$ is any index in $\kappa N \leq j \leq (1- \kappa ) N$ for $\kappa >0$ and $\gamma_{j, \mathrm{sc}}$ denote the classical eigenvalue locations of the semicircle law.
\eet

\subsection{Sketch of proof}\label{subsec: formal derivation}

Since the law of $VU^*$ is still the Haar measure and the eigenvalues of $H$ are invariant under conjugation by $U$, it is sufficient to prove Theorem \ref{thm:bu} for $U$ being any random unitary matrix independent of $V$.  The strategy is roughly as follows. We run a Brownian motion $U(t)$ on the unitary group with certain weights in different directions with initial value $U(0)=I$.   We take $U(t)$ to be independent from $V$. Then we define 
\be
	H(t):= V^*XV+U(t)^*YU(t)\,.
\ee 
We derive an SDE for the eigenvalues $(\la_1(t),\cdots,\la_N(t))$ of $H(t)$.  By a judicious choice of $U(t)$, the SDE turns out to be very similar to Dyson Brownian motion with $\beta=2$ (in the case where $V$ and $U$ are orthogonal matrices, we get an SDE similar to Dyson Brownian motion with $\beta=1$.)  A careful analysis of this SDE yields that at time 
\be\label{def: T}
	T=N^{-1+\mathfrak{b}}\,,
\ee
where $\mathfrak{b}>0$ is a small constant to be chosen (it will depend on the assumptions on $X$ and $Y$), the eigenvalue statistics of $H(T)$ coincide with those of DBM started with initial data $H(0)$ (to be more precise, it will turn out that we need to make a slight modification to the initial data $H(0)$ of the usual DBM process).   The main result of \cite{fixed} states that the local statistics of this process then coincide with the GUE and so we conclude that bulk universality holds for $H(T)$. This immediately implies the same result for $H(0)$ because the law of eigenvalues of $H(T)$ is the same as that of $H(0)$. In this subsection, we define the Brownian motion on the unitary group, then formally derive the dynamics of eigenvalues.  The well-posedness of the SDEs in concern will be handled in Section \ref{sec: wpness}.

The distribution of $(\la_1,\cdots,\la_N)$ is unaffected if we let $U$ be any random unitary matrix with a law independent from $V$. Therefore, without loss of generality, we consider $U(0)=I$ and let $U(t)$ solve the following SDE:

\be\label{eq: du}
\dd U(t) = \ii \dd W(t) U(t) -\tfrac{1}{2}A U(t)\dd t\,.
\ee
Here $W$ and $A$ are defined as follows.  $W=(W_{\alpha\beta})_{\alpha\neq \beta}$ is a family of independent complex-valued Brownian motions (up to the Hermitian constraint $W_{\alpha\beta}^* = W_{\beta\alpha}$) with quadratic variation process
 $$\langle W_{\alpha\beta}^*,W_{\alpha\beta}\rangle_t=N^{-1}\sigma_{\alpha\beta}^2 t$$ 
 where $\sigma_{\alpha\beta}$ are deterministic and to be chosen (they will later be chosen to be a function of the $y_k$'s). The matrix $A$ on the right hand side of \eqref{eq: du} is a deterministic diagonal matrix with diagonal entries given by 
\be\label{eq: A}
	A_{\alpha\alpha} = N^{-1}\sum_\beta \sigma_{\alpha\beta}^2\,.
\ee
  By standard results (e.g. Theorem H.6 in \cite{AGZ}), the SDE \eqref{eq: du} has a unique strong solution $U(t)$.  By differentiating $U(t)^*U(t)$ using It{\^ o}'s formula, one easily sees that the solution $U(t)$ stays on the unitary group.

We differentiate $H$ using It{\^o}'s formula to see

\be \label{eq: dH1}
\dd H = \ii U^* [Y, \dd W] U -U^*AYU\dd t + U^* \dd\langle W, YW\rangle_t U \,,
\ee
Here $[\cdot,\cdot]$ means the commutator of two matrices and $\langle\cdot,\cdot\rangle_t$ denotes the quadratic covariation of two processes.  Recalling that $\dd\langle W_{\alpha\beta}^*,W_{\alpha\beta}\rangle_t=N^{-1}\sigma_{\alpha\beta}^2 \dd t$, we have

\be	
\dd H = \ii U^* ( (y_\alpha-y_\beta)\dd W_{\alpha\beta}) U + U^* (\delta_{\alpha\beta}N^{-1}\sum_\gamma \sigma_{\alpha\gamma}^2(y_\gamma-y_\alpha)) U\dd t\,.
\ee 
Here we used the notation that $(a_{\alpha\beta})$ represents the $N$ by $N$ matrix whose $(\alpha,\beta)$-th entry equals $a_{\alpha\beta}$.  Denote 
\be\label{def: haty}
	\hat{y}_\alpha := N^{-1}\sum_\gamma \sigma_{\alpha\gamma}^2(y_\gamma-y_\alpha)\,,\quad\hat{Y}:= (\delta_{\alpha\beta} \hat{y}_\alpha)_{\alpha\beta}\,,\quad \th_\alpha :=\sum_\gamma \sigma_{\alpha\gamma}^2\,.
\ee 
Equation \eqref{eq: dH1} becomes

\be	\label{eq: dH2}
\dd H = \ii U^* ( (y_\alpha-y_\beta)\dd W_{\alpha\beta}) U + U^* \hat{Y} U\dd t\,.
\ee 
Let $0<\mathfrak{b}<\mathfrak{a}<1$ be two small constants such that $\mathfrak{b}\leq \mathfrak{a}/100$. Define
\ben\label{def: sig}
\sigma_{\alpha\beta}=
  \begin{cases}
  	\abs{y_\alpha-y_\beta}^{-1}, & \text{for } \abs{\alpha-\beta}\geq N^{\mathfrak{a}}\,; \\
  	0, & \text{for } \abs{\alpha-\beta}< N^{\mathfrak{a}}\,.
  \end{cases}
\ee
In view of the lower bound \eqref{y space}, we see that the deterministic diagonal matrix $A$ appearing in \eqref{eq: du} (and defined by \eqref{eq: A}) satisfies
\be\label{A bound}
A_{\alpha\alpha}\lesssim \frac{1}{N}\sum_{\abs{\alpha-\beta}\geq N^\mathfrak{a}} \frac{1}{\abs{\alpha-\beta}^2N^{-2}} \lesssim N^{1-\mathfrak{a}}\,.
\ee
As we will show later on, the above bound on $A_{\alpha\alpha}$ leads to the fact that $\norma{U(t)-I}\ll 1$ with high probability, whenever $t\ll N^{-1+\mathfrak{a}}$. We also have a bound for $\hat{Y}$,

\be\label{hatY bound}
	\norm{\hat{Y}} = \max_\alpha \abs{\hat{y}_\alpha} \lesssim \sum_{\abs{\alpha-\beta}\geq N^\mathfrak{a}} \frac{1}{\abs{\alpha-\beta}/N}\lesssim \log N\,.
\ee
To proceed, we introduce the notion of Hermitian Brownian motions:
\begin{defn}\label{def: HBM}
	An $N\times N$ matrix-valued stochastic process $B=(B_{\alpha\beta})_{1\leq \alpha,\beta\leq N}$ is called a Hermitian Brownian motion if 
	\begin{enumerate}
		\item $(B_{\alpha\beta})_{\alpha<\beta}$ are independent standard complex Brownian motions..
		\item $(B_{\alpha\alpha})_{1\leq \alpha\leq N}$ are independent standard real Brownian motions.
		\item $(B_{\alpha\beta})_{\alpha<\beta}$ and  $(B_{\alpha\alpha})_{1\leq \alpha\leq N}$ are independent from each other.
	\end{enumerate}
\end{defn}
Let $B=(B_{\alpha\beta})_{1\leq \alpha\beta \leq N}$ be a Hermitian Brownian motion such that 
\be\label{def: B}
	\tfrac{1}{\sqrt{N}}B_{\alpha\beta}=\ii(y_\alpha-y_\beta)W_{\alpha\beta}\,,\text{ for }\abs{\alpha-\beta}\geq N^{\mathfrak{a}}\,.
\ee
Therefore, \eqref{eq: dH1} becomes
\be\label{eq: dH3}
	\dd H = \frac{1}{\sqrt{N}}\dd \hat{B} + \frac{1}{N}U^*\hat{Y}U\dd t - \frac{1}{\sqrt{N}}U^*(\dd B_{\alpha\beta} \ind{\abs{\alpha-\beta}<N^\mathfrak{a}})U\,.
\ee
Here $\hat{B}(t): = \int _0^tU(s)^*\dd B(s) U(s) $.  It is easy to see that $\hat{B}$ is also a Hermitian Brownian motion.

For technical reasons, the drift term $U^*\hat{Y}U\dd t$  will produce error terms which are difficult to handle at the level of the eigenvalue dynamics.  We therefore consider an alternative process $\tilde{H}(t)$, defined by 
\be
	\tilde{H}(t):= H(t)+ (T-t)\hat{Y}\,.
\ee
It is easy to see that $\tilde{H}(t)$ is a process with initial value
\be
	\tilde{H}(0)= H(0)+T\hat{Y}
\ee
and satisfies the SDE, 
\be\label{eq: dtH}
	\dd \tilde{H}=  \frac{1}{\sqrt{N}}\dd \hat{B} +\frac{1}{N}(U^*\hat{Y}U-\hat{Y})\dd t -\frac{1}{N}U^*(\dd B_{\alpha\beta} \ind{\abs{\alpha-\beta}<N^\mathfrak{a}})U\,.
\ee
Since $H(T) = \tilde{H} (T)$ it will suffice to consider the latter process.  
The advantage of dealing with the $\tilde{H}(t)$ process instead of $H(t)$ is that the $\hat{Y}$ terms can be handled using the matrix estimate $||U(t) -  1|| \ll 1$ which we derive below.
For simplicity denote
\be
	\dd B_k:= \dd \hat{B}_{kk}\,.
\ee
Let $a_k:=(a_{\alpha k})_{1\leq a\leq N}(t)$ be the eigenvector associated to the $k$-th smallest eigenvalue of $\tilde{H}(t)$.  Let
\be
	w_{\beta k}:=\sum_{\alpha} U_{\beta\alpha} a_{\alpha k}\,,\qquad \gamma_{ij}:= \sum_{\abs{\alpha-\beta}<N^\mathfrak{a}} \abs{w_{\alpha i}}^2 \abs{w_{\beta j}}^2 \,.
\ee
We abuse notation and denote the eigenvalues of $\tilde{H}(t)$ by $(\la_1,\cdots,\la_N)$.  Formally applying the It{\^o} lemma we see that
\be\label{eq: dla}
\begin{split}
		\dd \la_i = &\frac{1}{\sqrt{N}}\dd B_i -\frac{1}{\sqrt{N}}\sum_{\abs{\alpha-\beta}<N^\mathfrak{a}}w_{\alpha i}^* \dd B_{\alpha\beta} w_{\beta i}\\ &+\frac{1}{N}\sum_{j\neq i} \frac{(1-\ga_{ij})\dd t}{\la_i-\la_j}+ \langle a_i, (U^*\hat{Y}U-\hat{Y}) a_i \rangle \dd t
\end{split}
\ee
Here $\langle \cdot ,\cdot \rangle$ without subscript $t$ denotes the inner product between vectors in $\C^N$, as opposed to the previous notation $\langle \cdot ,\cdot \rangle_t$ standing for the quadratic covariation process.

Unlike the SDEs for $H$ and $\tilde{H}$, the equation \eqref{eq: dla} is problematic because the drift term 
\be
\frac{1}{N}\sum_{j\neq i} \frac{1-\ga_{ij}}{\la_i-\la_j}\dd t
\ee
is quite singular.  In the usual DBM $\gamma_{ij} = 0$ and the well-posedness for $\beta \geq 1$ can be proven by standard methods, see Proposition 4.3.5 in \cite{AGZ}.   In this case, the effect of $\gamma_{ij} >0$ means that effectively  (at least in terms of eigenvalue collision) we have $\beta <1$. It is therefore non-trivial to justify the well-posedness of the equation.  We remark here that since we are later able to prove that $\gamma_{ij} = o(1)$, the $\beta=2$ case is technically simpler; however the $\beta=1$ case requires the well-posedness.  In Section \ref{sec: wpness}, we will prove the well-posedness of \eqref{eq: dla} and show that the solution of \eqref{eq: dla} gives the eigenvalue process of $\tilde{H}(t)$.  In Section 
\ref{sec: estimates}, we prove some estimates which ensure that the second and fourth terms are negligible and that $\ga_{ij}\ll 1$ with high probability.  This allows us to view the SDE \eqref{eq: dla} as

\ben
	\dd \la_i = \frac{1}{\sqrt{N}}\dd B_i +\frac{1}{N}\sum_{j\neq i} \frac{(1-o(1))\dd t}{\la_i-\la_j}+ \text{error terms}\,,
\ee
which resembles the Dyson Brownian motion.   In Section \ref{sec: DBM} we  compare this process to DBM with initial data $\tilde{H}(0)$, which will in turn prove bulk universality.

\subsection{Well-definedness of coefficients}

Note that in the above derivation, the $k$-th eigenvector $a_k$ of $\tilde{H}(t)$ is not well-defined when the $k$-th eigenvalue of $\tilde{H}(t)$ has multiplicity greater than 1.  To solve this issue, we add a very small Gaussian perturbation, and redefine $X$ as 

\be\label{def: tdX}
	X:= \diag\{x_1,\cdots,x_N\}+ \e^{-N} Q\,,
\ee
where $Q=(Q_{ij})$ is a Hermitian matrix draw from the GUE ensemble, i.e., the probability density of $Q$ equals
\be\label{def: pQ}
	p_Q(q)= \frac{1}{Z_N} \e^{-\frac{N}{2}\sum_{i,j}\abs{q_{ij}}^2}\,.
\ee

We have the following proposition, which indicates that the eigenvalues of $X$ are almost surely distinct, and are well approximated by $x_1,\cdots,x_N$.  In this subsection, an $N$-dependent constant $c_N$ may change from line to line, but only changes for finite many times.

\begin{prop}\label{prop: tdX}
	Let $P$ be an $N\times N$ Hermitian matrix.  Let $\tilde{P}$ be given by
	\be
		\tilde{P} =P+\e^{-N}Q\,,
	\ee
	where $Q$ has the same distribution as in \eqref{def: pQ}, independent from $P$.
	Let ${\alpha_1}\leq\cdots\leq{\alpha_N}$ be the eigenvalues of $\tilde{P}$, and $\ga_1\leq \cdots\leq \ga_N$ be the eigenvalues of $P$.  
	
	Then, ${\alpha_1},\cdots,{\alpha_N}$ are almost surely distinct.  We have the following estimates,
	\be\label{ineq: rep}
		\begin{split}
		\E \sum_{i\neq j} \abs{\alpha_i-\alpha_j}^{-1}&<c_N\psi(N,P)\,,\\
		\quad \P [\min_{i\neq j} \abs{\alpha_i-\alpha_j} \leq \delta] &\leq c_N\psi(N,P)\delta^2\,, \forall \delta\in(0,1)\,.
		\end{split}
	\ee
	where, 
	\be
	\psi(N,P):= \exp \left(\e^{2N}\sum_{kl}\abs{P_{kl}}^2\right).
	\ee
	 Moreover, the following estimate holds:
	\be\label{tdX bd}
			\P[\max_{1\leq k\leq N}\abs{\alpha_k-\ga_k}\geq \e^{-N/2}]\leq \e^{-\e^{N/2}}\,.
	\ee 
\end{prop}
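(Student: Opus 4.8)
The plan is to exploit the smoothing effect of adding a small GUE matrix $\e^{-N}Q$: conditionally on $P$, the matrix $\tilde P$ is a GUE matrix centered at $P$ with variance $\e^{-2N}/N$ per entry, so its joint eigenvalue density is absolutely continuous with an explicit Gaussian-type form, and the Vandermonde factor $\prod_{i<j}(\alpha_i-\alpha_j)^2$ controls the level repulsion. First I would write the (conditional) joint eigenvalue density of $\tilde P$. Since $\e^{-N}Q$ has density $\propto \exp(-\tfrac{N}{2}\e^{2N}\tr R^2)$ on Hermitian matrices $R$, the matrix $\tilde P = P + R$ has density $\propto \exp(-\tfrac{N}{2}\e^{2N}\tr(\tilde P - P)^2)$. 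Integrating out the ``angular'' (eigenvector) variables via the Harish-Chandra--Itzykson--Zuber integral, or more crudely just bounding $\exp(-\tfrac{N}{2}\e^{2N}\tr(\tilde P-P)^2) \le \exp(N\e^{2N}\tr P^2)\exp(-\tfrac{N}{4}\e^{2N}\sum_i\alpha_i^2)$ after completing the square and using $\tr \tilde P^2 = \sum_i \alpha_i^2$, one obtains a pointwise upper bound on the conditional density of $(\alpha_1,\dots,\alpha_N)$ of the form
\be
 f(\alpha) \le c_N\, \psi(N,P)\, \prod_{i<j}(\alpha_i-\alpha_j)^2 \prod_i \e^{-\frac{N}{4}\e^{2N}\alpha_i^2},
\ee
where $c_N$ collects the normalization constant of GUE (which is an explicit $N$-dependent quantity), and $\psi(N,P) = \exp(\e^{2N}\sum_{kl}|P_{kl}|^2)$ absorbs the cross term $\exp(N\e^{2N}\tr P^2)$ up to harmless constants. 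Almost-sure distinctness of the $\alpha_i$ is then immediate, since the set where two coincide has Lebesgue measure zero and $f$ is a genuine density.

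Next I would bound $\E\sum_{i\ne j}|\alpha_i-\alpha_j|^{-1}$ and $\P[\min_{i\ne j}|\alpha_i-\alpha_j|\le\delta]$ using the density bound. For the first, it suffices by symmetry to bound $\binom{N}{2}$ copies of $\E|\alpha_1-\alpha_2|^{-1}$; the Vandermonde factor contains $(\alpha_1-\alpha_2)^2$, so integrating $|\alpha_1-\alpha_2|^{-1}(\alpha_1-\alpha_2)^2 = |\alpha_1-\alpha_2|$ against the remaining (Gaussian-weighted, still Vandermonde-bounded) measure in the other variables converges and is bounded by an $N$-dependent constant times $\psi(N,P)$. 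For the minimum gap, partition into events $\{|\alpha_i-\alpha_j|\le\delta\}$, union-bound over the $O(N^2)$ pairs, and on each such event integrate the density: the factor $(\alpha_i-\alpha_j)^2 \le \delta^2$ on the region of integration over $\alpha_i - \alpha_j \in [-\delta,\delta]$, and integrating out the remaining variables and the ``center of mass'' of the pair against the Gaussian-Vandermonde bound gives a finite $N$-dependent constant; collecting, $\P[\min_{i\ne j}|\alpha_i-\alpha_j|\le\delta]\le c_N\psi(N,P)\delta^2$. I would absorb all purely $N$-dependent prefactors (GUE normalization, the $\binom N2$ and $N^2$ combinatorial factors, the Gaussian moment integrals) into $c_N$, which is allowed since by hypothesis it may change finitely many times.

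Finally, for the eigenvalue-location estimate \eqref{tdX bd}, I would use Weyl's inequality: $|\alpha_k - \ga_k| \le \|\tilde P - P\| = \e^{-N}\|Q\|$ for all $k$ simultaneously. It then remains to show $\P[\e^{-N}\|Q\|\ge\e^{-N/2}] = \P[\|Q\|\ge\e^{N/2}]\le \e^{-\e^{N/2}}$. Since $Q$ is GUE with entries of variance $1/N$, $\|Q\|$ concentrates around $2$ and has Gaussian tails; a standard bound (e.g.\ a union bound over an $\eps$-net of the sphere combined with the Gaussian tail of $\langle v, Qv\rangle$, or directly the Gaussian concentration of $\|Q\|$ as a $1$-Lipschitz function of the entries) gives $\P[\|Q\|\ge t]\le \e^{-cNt^2}$ for $t$ large, which at $t=\e^{N/2}$ is far smaller than $\e^{-\e^{N/2}}$.

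The main obstacle is the first step: getting a clean pointwise upper bound on the conditional eigenvalue density with the right $\psi(N,P)$ dependence and harmless $c_N$. The subtlety is that one cannot directly diagonalize $P$ and reduce to a standard computation because $P$ and $\tilde P$ need not be simultaneously diagonalizable; completing the square in $\tr(\tilde P - P)^2 = \tr\tilde P^2 - 2\tr(P\tilde P) + \tr P^2$ and bounding $|\tr(P\tilde P)| \le \tfrac14\e^{-2N}\cdot(\text{something})^{-1}\|P\|_{HS}^2 + (\text{rest})$ via Cauchy--Schwarz, so that the eigenvector dependence drops out after the HCIZ integration (which integrates to $1$ over the unitary group in the relevant normalization) or is bounded trivially, is where care is needed; everything downstream is a routine integral against a Gaussian-weighted Vandermonde measure.
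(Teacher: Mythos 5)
Your proposal is correct and follows essentially the same route as the paper: write the conditional Gaussian density of $\tilde P$, pass to eigenvalue coordinates where the Jacobian supplies the Vandermonde factor $\prod_{i<j}(\alpha_i-\alpha_j)^2$, eliminate the eigenvector dependence of the cross term $\tr(P\tilde P)$ by a Cauchy--Schwarz/completing-the-square bound so that only $\exp(c\,\e^{2N}\tr P^2)$ survives, and then read off the repulsion estimates from the resulting density bound, with \eqref{tdX bd} following from Weyl's inequality and a crude tail bound on $\|Q\|$. The only deviations are cosmetic: you conclude the minimal-gap bound by a union bound over pairs and direct integration of the $(\alpha_i-\alpha_j)^2\le\delta^2$ factor rather than the paper's reparametrization in gap coordinates, and you bound $\|Q\|$ by a net/concentration argument rather than the paper's Frobenius-norm exponential moment, both of which yield the stated (non-optimal) estimates equally well.
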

\begin{remark}
	In this proposition, the constant $\psi(N,P)$ is far from optimal and $c_N$ is not explicitly given. However, they are sufficient for the purpose of proving the well-posedness of \eqref{eq: dla}; the $N$-dependence is of no importance and one just needs some weak uniformity of the constants in $P$.  In Section \ref{sec: wpness} we will use Proposition \ref{prop: tdX} for fixed $N$. 
	
	Additionally, the above proposition holds for $P$ a symmetric matrix and $Q$ a GOE matrix.   In fact, in the GUE case we get $\delta^3$ for the second estimate of \eqref{ineq: rep} and $\delta^2$ for the GOE case.
\end{remark}
\begin{proof}
	It is sufficient to prove the proposition for any diagonal matrix $P$, because the law of $Q$ and the quantity $\psi(N,P)$ are invariant under conjugation by unitary matrices.
	
	Let $\mathcal{H}_N$ denotes the space of $N\times N$ Hermitian matrices. Note that  $\mathcal{H}_N$ can be parametrized by $(w_{ij})\in\R^{N\times N}$, such that $(w_{ij})$ represents the Hermitian matrix whose upper triangular  	
	part is  $(w_{ij}+\ii w_{ji}\ind{i\neq j})_{1\leq i\leq j\leq N}$.  Hence $\mathcal{H}_N$ naturally inherits the Lebesgue measure on $\R^{N\times N}$.  
	
	For brevity denote 
	\be
		\sigma_N:= e^{-N}\,.
	\ee
	Since $\tilde{P}$ has a Gaussian component, the probability measure of $\tilde{P}$ has a smooth density with respect to Lebesgue measure on $\mathcal{H}_N$, with the explicit formula
	\be
		p_{\tilde{P}}(w) = \frac{1}{Z_N} \exp\left({-\frac{1}{2\sigma_N^2} \sum_{i,j} \abs{w_{ij}-\delta_{ij}P_{ii}}^2}\right)\,.
	\ee
	Note that $Z_N$ does not depend on $P$.  We want to parametrize $\mathcal{H}_N$ by new coordinates $(\la,u)$ such that \ben
		\la=(\la_1,\cdots,\la_N)\in\R^N_\leq := \{(x_i)_{1\leq i\leq N}\in \R^N: x_1\leq \cdots \leq x_N\}
	\ee
	parametrizes the eigenvalues of any Hermitian matrix. For this purpose , we look at the spectral decomposition for any Hermitian matrix $M$:
	\be
		M= U^*\La U\,,
	\ee
	where  $\La$ is a real diagonal matrix and $U$ is a unitary matrix with non-negative real diagonal entries.  Note that $U$ is uniquely determined by its strict upper triangular part. Therefore, we can define $U(u)$ to be the unitary matrix determined by its strict upper triangular part $u\in\C^{N(N-1)/2}$.  Let $\Sigma\subset \C^{N(N-1)/2}$ be the compact domain where the map $u\mapsto U(u)$ is well defined. In this way, we have defined a map $T:\R^N_\leq \times \Sigma \to \mathcal{H}_N$ through
	\be
		T(\la,u)=U(u)^*\La(\la)U(u)\,,
	\ee
	where $\La(\la):=\diag(\la_1,\cdots,\la_N)$.  By (the proof of) Theorem 2.5.2 in [AGZ], the Jacobian $JT$ of the map $T$ satisfies
	\be
		JT = \prod_{i\neq j}(\la_i-\la_j)^2g(u)\,,
	\ee
	where $g(u)$ is a integrable function on $\Sigma$ . 
	
	Therefore, the probability density of $\tilde{P}$ in the new coordinates $(\la,u)$ equals
	\be
		p_{\tilde{P}}(\la,u)= \frac{1}{Z_N} \exp\left[{\frac{-1}{2\sigma_N^2}\left(\sum_k \la_k^2 -2 \sum_{k,l} \la_k \abs{U(u)_{kl}}^2 P_{ll} +\sum_k P_{kk}^2\right)}\right]\prod_{i\neq j}(\la_i-\la_j)^2g(u) \,.
	\ee
	By the simple observation $2\sum_{k,l} \la_k \abs{U(u)_{kl}}^2 P_{ll}\leq \frac{1}{2}\sum_k \la_k^2+2\sum_kP_{kk}^2$, we have
	\be
		\begin{split}
		p_{\tilde{P}}(\la,u)&\leq c_N \exp\left[{\frac{-1}{2\sigma_N^2}\left(\frac{1}{2}\sum_k \la_k^2-2\sum_kP_{kk}^2\right)}\right]\prod_{i\neq j}(\la_i-\la_j)^2g(u) \\
			& = c_N \exp \left(\frac{1}{\sigma_N^2}\sum_kP_{kk}^2\right) \exp\left(\frac{-1}{4\sigma_N^2}\sum_k \la_k^2\right)\prod_{i\neq j}(\la_i-\la_j)^2g(u)
		\end{split} \,.
	\ee
	Integrating over $u$, we get the following bound for the marginal density of eigenvalues,
	\be
		p(\la)\leq c_N \psi(N,P)\exp\left(\frac{-1}{4\sigma_N^2}\sum_k \la_k^2\right)\prod_{i\neq j}(\la_i-\la_j)^2\,.
	\ee
	Here $\psi(N,P):= \exp \left(\frac{1}{\sigma_N^2}\sum_kP_{kk}^2\right)$.	It immediately follows that the eigenvalues $(\alpha_1,\cdots,\alpha_N)$ of $\tilde{P}$ satisfy 
	\be
	\begin{split}
		\E \sum_{i\neq j} \abs{\alpha_i-\alpha_j}^{-1}&\leq c_N\psi(N,P) \int_{\R^N_\leq}\exp\left(\frac{-1}{5\sigma_N^2}\sum_k \la_k^2\right) \dd \la \leq c_N\psi(N,P)\,. \\
	\end{split}
	\ee
	For the second part of \eqref{ineq: rep}, we use another parametrization $\beta=(\beta_1,\cdots,\beta_N)$ given by
	\be
		\beta_k:=\la_k-\la_{k-1}\,,\text{ for } 1<k\leq N\,;\quad \beta_1:=\la_1\,.
	\ee
	Since $\la\mapsto\beta$ is a linear map, the Jacobian is a constant depending on $N$. The density in terms of $\beta$ satisfies
	\ben
		p(\beta)\leq c_N\psi(N,P) \exp\left[\frac{-1}{4\sigma_N^2}\sum_k \left(\sum_{1\leq l\leq k} \beta_l\right)^2\right]\prod_{i\neq j} \left(\sum_{i<l\leq j} \beta_l\right)^2\,.
	\ee
	Now we fix an $m\geq 2$ and look at the marginal density of $\beta_m$ when $\beta_m<1$.  We use the elementary inequality $(a+b)^2\geq a^2/2-b^2$ to get
	\ben	
		\left(\sum_{1\leq l\leq k} \beta_l\right)^2\geq \frac{1}{2}\left(\sum_{1\leq l\leq k, l\neq m} \beta_l\right)^2-\beta_m^2\geq  \frac{1}{2}\left(\sum_{1\leq l\leq k, l\neq m} \beta_l\right)^2-1\,.
	\ee
	Also,
	\ben
		\sum_{i<l\leq j} \beta_l\leq \sum_{i<l\leq j,l\neq m} \beta_l+1\,.
	\ee
	Therefore, 
	\be
		p(\beta)\leq c_N\psi(N,P) \exp\left[\frac{-1}{4\sigma_N^2}\sum_k \left(\sum_{\substack{1\leq l\leq k,\\l\neq m}} \beta_l\right)^2\right]\prod_{\substack{i\neq j, \\(i,j)\neq (m-1,m)}} \left(\sum_{\substack{1\leq l\leq k,\\l\neq m}} \beta_l+1\right)^2 \beta_m^2\,.
	\ee
	Integrating out all the variables except for $\beta_m$, we have
	\be
		p(\beta_m) \leq  c_N\psi(N,P) \beta_m^2\,.
	\ee
	Therefore, $\P[\la_m-\la_{m-1}\leq \delta] \leq c_N\psi(N,P) \delta^3$.  Summing over $1<m\leq N$ concludes the second part of \eqref{ineq: rep}.

	To prove the bound \eqref{tdX bd}, we denote the Frobenius norm of $Q$ by $\norm{Q}_F:=\sqrt{\sum_{i,j}\abs{Q_{ij}}^2}$ and $\norm{Q}$ the operator norm of $Q$. We have a trivial inequality $\norm{Q}\leq \norm{Q}_F$. By definition of $Q$ we have
	\be
		\E \e^{\norm{Q}^2/4}\leq \E \e^{\norm{Q}_F^2/4} = \int_{\R^{N\times N}} (2\pi)^{-N^2/2}\e^{-\tfrac{1}{4}\sum_{1\leq i,j\leq N}x_{ij}^2}\dd \vec{x} = 2^{N^2/2}\,.
	\ee
	By Chebyshev's ienquality we get a crude bound on $\norm{Q}$:
	\be
		\P[\norm{Q}\geq \e^{N/2}]\leq 2^{N^2/2} \e^{-\e^{N}/4}\leq \e^{-\e^{N/2}}\,,
	\ee
	when $N$ is large enough.  Therefore, $\norm{\tilde{P}-P}\leq \e^{-N/2}$ with probability $1-\e^{-\e^{N/2}}$.  On the event where $\norm{\tilde{P}-P}\leq \e^{-N/2}$, we have by Weyl's inequality, 
	\be
		\abs{\alpha_k-\ga_k}\leq \e^{-N/2}\,,\text{ for all }1\leq k\leq N\,.
	\ee 
\end{proof}

The proposition (in particular, the estimate \eqref{tdX bd}) implies that in order to prove Theorem \ref{thm:bu} where $X=\diag\{x_1,\cdots,x_N\}$, it is sufficient to prove the same result where $X$ is redefined in \eqref{def: tdX}. 

\begin{thm}\label{thm:bu1}
Let $X$ be defined in \eqref{def: tdX} and $Y=\{y_1,\cdots,y_N\}$ where $(x_k)$ and $(y_k)$ satisfy the assumptions in Section \ref{subsec: ass}.  Let $H=V^*XV+U^*YU$ where $V$ and $U$ are independently drawn from the Haar measure on the unitary group $\mathcal{U}_N$ (or the orthogoanl group $\mathcal{O}_N$).  Then, the conclusions of Theorem \ref{thm:bu} hold for $H$.

\end{thm}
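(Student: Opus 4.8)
The plan is to follow the perturbed-DBM strategy sketched in Section~\ref{subsec: formal derivation}. By the translation invariance of the Haar measure we may take $U(t)$ to be the strong solution of~\eqref{eq: du} with the weights $\sigma_{\alpha\beta}$ fixed there and $U(0)=I$, independent of $V$, and work with the auxiliary process $\tilde H(t)=H(t)+(T-t)\hat Y$, $T=N^{-1+\mathfrak b}$. Since $\tilde H(T)=H(T)$ and $H(T)$ has the same eigenvalue distribution as $H(0)=H$, it suffices to prove bulk universality for $\tilde H(T)$. To do this we would show that on $[0,T]$ the eigenvalue SDE~\eqref{eq: dla} is, up to errors that integrate to $o(N^{-1})$, Dyson Brownian motion (DBM) with $\beta=2$ in the unitary case (respectively $\beta=1$ in the orthogonal case), couple it to a genuine DBM, and invoke the universality result of~\cite{fixed}.

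The first block of work (Section~\ref{sec: estimates}) is to collect the a priori estimates that make this comparison possible, valid with probability at least $1-N^{-D}$ for any $D>0$ and uniformly for $t\le T$: (i) $\norm{U(t)-I}\ll1$, which follows from the drift bound~\eqref{A bound}, the SDE~\eqref{eq: du}, and the constraint $\mathfrak b\le\mathfrak a/100$; (ii) the optimal bulk local law and rigidity for $H$ from~\cite{BES16,BES16a,BES16b}, \emph{together with} a weaker local law and stability estimate on scale $N^{-c}$ valid throughout $\supp\mu$ --- this global bound is a genuinely new ingredient, needed because the analysis of~\eqref{eq: dla} requires at least crude control at the spectral edges, where~\cite{BES16,BES16a,BES16b} do not apply --- and all of these transfer from $H$ to $\tilde H(t)$ because $\norm{(T-t)\hat Y}\lesssim N^{-1+\mathfrak b}\log N$ by~\eqref{hatY bound}; (iii) eigenvector delocalization for $\tilde H(t)$, which combined with (i) gives $\max_{i,j}\gamma_{ij}\lesssim N^{\mathfrak a-1+o(1)}=o(1)$ and shows that in~\eqref{eq: dla} the martingale correction $\tfrac1{\sqrt N}\sum_{\abs{\alpha-\beta}<N^{\mathfrak a}}w_{\alpha i}^*\,\dd B_{\alpha\beta}\,w_{\beta i}$, the drift $\bka{a_i,(U^*\hat Y U-\hat Y)a_i}$, and the $\gamma_{ij}$-piece of the repulsion each integrate to $o(N^{-1})$ over $[0,T]$.

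Next I would establish the well-posedness of~\eqref{eq: dla} (Section~\ref{sec: wpness}) and identify its solution with the eigenvalue process of $\tilde H(t)$; I expect this to be the main obstacle. Because $\gamma_{ij}>0$, the repulsion $\tfrac1N\sum_{j\ne i}\tfrac{1-\gamma_{ij}}{\lambda_i-\lambda_j}$ is effectively subcritical ($\beta<1$), so the classical well-posedness argument for DBM fails and one cannot a priori exclude eigenvalue collisions. The remedy is the tiny Gaussian component $\e^{-N}Q$ added to $X$ in~\eqref{def: tdX}: conditioning on $V$ and on the path $U(\cdot)$ and applying Proposition~\ref{prop: tdX} for fixed $N$, the level-repulsion bound~\eqref{ineq: rep} gives $\int_0^T\sum_{i\ne j}\abs{\lambda_i(t)-\lambda_j(t)}^{-1}\,\dd t<\infty$ almost surely, so the drift of~\eqref{eq: dla} is a.s.\ locally integrable. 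A Picard iteration combined with a stopping-time argument then yields a unique strong solution, and differentiating the spectral decomposition of $\tilde H(t)$ with It\^o's formula away from the null set of collision times identifies it with the eigenvalue process. For $\beta=2$ the bound $\gamma_{ij}=o(1)$ makes this step simpler, but for $\beta=1$ the level-repulsion input seems unavoidable.

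Finally (Section~\ref{sec: DBM}) I would couple the eigenvalues of $\tilde H(t)$ to the reference process $\dd\mu_i=\tfrac1{\sqrt N}\dd B_i+\tfrac{\beta}{2N}\sum_{j\ne i}(\mu_i-\mu_j)^{-1}\,\dd t$ driven by the \emph{same} Brownian motions, with $\mu_i(0)=\lambda_i(0)$ the eigenvalues of $\tilde H(0)=H(0)+T\hat Y$. Following~\cite{wigfixed,fixed}, $v_i:=\lambda_i-\mu_i$ solves the discrete parabolic equation $\partial_t v_i=\sum_{j\ne i}\mathcal B_{ij}(t)(v_j-v_i)+\xi_i(t)$ with $v_i(0)=0$, where $\mathcal B_{ij}(t)=\tfrac{\beta}{2N}\big((\lambda_i-\lambda_j)(\mu_i-\mu_j)\big)^{-1}\ge0$ and $\xi_i$ collects the errors from the previous step. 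Using the rigidity estimates to control the coefficients $\mathcal B_{ij}$ and the finite-speed-of-propagation and H\"older-continuity heat-kernel estimates of~\cite{fixed} for the associated (random, time-dependent) generator, a Duhamel representation yields $\sup_{i\in\mathrm{bulk}}\abs{\lambda_i(T)-\mu_i(T)}=o(N^{-1})$, which is enough to match gap statistics and $k$-point correlation functions on scale $N^{-1}$. Since $\tilde H(0)$ is bounded with limiting spectral measure $\mu_1\boxplus\mu_2$ (the $T\hat Y$ term being an $O(N^{-1+\mathfrak b}\log N)$ perturbation), whose density inherits the square-root-type regularity from~\eqref{edge behavior}, the main theorem of~\cite{fixed} applies to $\mu(\cdot)$ at time $T$ and identifies its local statistics with those of the GUE (respectively GOE). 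Transporting this through the coupling gives bulk universality for $\tilde H(T)=H(T)$, hence for $H$, which is the assertion of Theorem~\ref{thm:bu1}.
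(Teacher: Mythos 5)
Your roadmap matches the paper's (run $U(t)$, pass to $\tilde H(t)$, compare the eigenvalue SDE to DBM, invoke \cite{fixed}, use the $\e^{-N}Q$ component for well-posedness), but there is a genuine gap at the central analytic step. You claim that "the $\gamma_{ij}$-piece of the repulsion integrates to $o(N^{-1})$ over $[0,T]$" and later treat it as part of a forcing $\xi_i$ controlled through a Duhamel representation. No available estimate supports this: the term $\frac{1}{N}\sum_{j}\frac{\gamma_{ij}}{\lambda_i-\lambda_j}$ is singular in the gaps, and for neighbouring indices one would need something like $\int_0^T \frac{\dd t}{|\lambda_i-\lambda_{i+1}|}\ll N^{1-\mathfrak a}$, i.e.\ a quantitative dynamical level-repulsion bound. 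The only repulsion inputs in reach are Proposition \ref{prop: tdX}, whose constants $c_N\psi(N,P)$ are exponentially bad in $N$ and are used solely for fixed-$N$ well-posedness, and the weak estimate of Lemma \ref{lem:lr}, which gives only $N^{10}$. Because of this, the paper never bounds the $\gamma_{ij}$ term pathwise; instead it keeps it inside the parabolic structure: it first removes the martingale correction and the $Z_i$ drift by a regularized maximum principle (the $\frac{1}{\lambda}\log\sum_i\e^{\lambda u_i}$ functional of Lemma \ref{lem:lamz}, with the $N^{-c_2}$ regularization needed for $\beta=1$), then interpolates in $\alpha$ between the coupled processes, cuts to short range, and runs an $\ell^2$ energy estimate in which the forcing is symmetrized as $\frac{1}{N^2}\sum\frac{(v_i-v_j)\hat\gamma_{ij}}{\hat z_i-\hat z_j}$ and absorbed into the Dirichlet form $\langle v,\hat B v\rangle$, leaving only $\frac{1}{N^2}\sum|\hat\gamma_{ij}|^2$ — which requires no level repulsion at all, only the bulk bound $\hat\gamma_{ij}\le N^{\mathfrak a+\eps}/N$. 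This cancellation mechanism is the key idea your sketch is missing, and without it (or a genuinely new level-repulsion estimate) the coupling step does not close.

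A secondary overstatement: you assert $\max_{i,j}\gamma_{ij}\lesssim N^{\mathfrak a-1+o(1)}$ globally from eigenvector delocalization. The paper proves the strong bound $\gamma_{ij}\lesssim N^{\mathfrak a+\nu}/N$ only for bulk indices (Corollary \ref{cor: wbd}); away from the bulk, under the weak edge assumption \eqref{edge behavior}, it only obtains $|w_{\alpha k}|^2\le N^{-1/p}$ and hence $\gamma_{ij}\le N^{-2/p+\mathfrak a}$ (Theorem \ref{thm: edge deloc}). This weaker global bound is in fact all that is needed (it is what permits the definition of $\hat\gamma_{ij}$ and the stopping times), but your argument should not rely on global delocalization, which is not established and may fail at the edges.
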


\section{Some estimates}\label{sec: estimates}

In this section, we prove some estimates for the coefficients in equation \eqref{eq: dla} and \eqref{eq: dtH}, as well as the initial data $(\la_k(0))_{1\leq k\leq N}$ and $\tilde{H}(0)$.  

\subsection{Estimate of $U(t)-I$}\label{subsec: u-i}
In this subsection we prove that with high probability, $\sup_{0\leq t\leq T}\norm{U(t)-I}\ll 1$. For the reader's convenience, we recall that $0<\mathfrak{b}\leq\frac{\mathfrak{a}}{100}$ and $ \mathfrak{a} \leq \frac{1}{100}$ and  $T$ is defined by
\be\label{def: T 1}
	T=N^{-1+\mathfrak{b}}\,,
\ee
and that $U(t)$ is the unique strong solution to the SDE:
\be
	\dd U(t)=\ii \dd W (t) U(t) -\frac{1}{2}A U(t)\dd t\,;\quad U(0)=I\,.
\ee
The main theorem of this subsection is the following: 
\begin{thm}\label{thm: Ut-I} For  $\mathfrak{b}, \mathfrak{a}$ and $U$ as above, we have the estimate,
	\be	
	\P[\sup_{0\leq t\leq T} \norm{U(t)-I}>N^{-10\mathfrak{b}}]\leq \e^{-N^{10\mathfrak{b}}}\,.
	\ee
\end{thm}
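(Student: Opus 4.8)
The plan is to reduce the statement, via the variation-of-constants formula for \eqref{eq: du}, to operator-norm bounds on the pure Brownian part $W(t)$ and on a single It{\^o} integral, and then to obtain those from matrix concentration inequalities together with the a priori estimate \eqref{A bound}. Set $R(t):=U(t)-I$. Integrating \eqref{eq: du} and substituting $U=I+R$ gives
\[
R(t)=\ii\,W(t)-\tfrac12\int_0^t A\,\dd s+\ii\int_0^t\dd W(s)\,R(s)-\tfrac12\int_0^t A\,R(s)\,\dd s.
\]
Since each $U(s)$ is unitary we have the a priori bound $\norm{R(s)}\le 2$, and by \eqref{A bound} and \eqref{def: T 1} one has $T\norm A=T\max_\alpha A_{\alpha\alpha}\lesssim N^{\mathfrak{b}-\mathfrak{a}}$, which is $\le\tfrac12$ for large $N$. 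Hence the fourth term has norm $\le\tfrac12 T\norm A\,\sup_{s\le T}\norm{R(s)}$ and the second has norm $\le\tfrac12 T\norm A$; rearranging,
\[
\sup_{0\le t\le T}\norm{R(t)}\le 4\Big(\sup_{0\le t\le T}\norm{W(t)}+\sup_{0\le t\le T}\norma{\int_0^t\dd W(s)\,R(s)}+N^{\mathfrak{b}-\mathfrak{a}}\Big).
\]
Since $\mathfrak{b}\le\mathfrak{a}/100$ forces $\mathfrak{b}-\mathfrak{a}\le-99\mathfrak{b}$, it suffices to prove that each of the two random suprema is $\le N^{-11\mathfrak{b}}$ outside an event whose probability has exponent larger than $N^{10\mathfrak{b}}$; the bound $\norm{R(t)}\le N^{-10\mathfrak{b}}$ then follows for large $N$.

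To bound $\sup_{t\le T}\norm{W(t)}$ I would argue as follows. For fixed $t$, $W(t)$ is a Gaussian Hermitian matrix with independent entries (up to $W_{\alpha\beta}^*=W_{\beta\alpha}$) of variance $tN^{-1}\sigma_{\alpha\beta}^2$; writing $W(t)=\sqrt t\sum_k g_k A_k$ with $g_k$ i.i.d.\ standard Gaussian and $A_k$ the associated fixed Hermitian matrices, one computes $\sum_k A_k^2=A$. The matrix Khintchine inequality then gives $\E\norm{W(t)}\lesssim\sqrt{\log N}\,(t\norm A)^{1/2}\lesssim\sqrt{\log N}\,N^{(\mathfrak{b}-\mathfrak{a})/2}$, while $g\mapsto\norm{W(t)}$ is $(t\norm A)^{1/2}$-Lipschitz, so Gaussian concentration gives $\P(\norm{W(t)}\ge\E\norm{W(t)}+u)\le\exp(-u^2/(2t\norm A))$. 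Taking $u\asymp N^{-11\mathfrak{b}}$ and using $\mathfrak{a}\ge 100\mathfrak{b}$ yields a pointwise bound with exponent $\asymp N^{77\mathfrak{b}}$. To promote this to the supremum over $t\in[0,T]$ I would union-bound over a polynomially fine time grid $t_j=jTN^{-C}$ (which costs only a factor $N^C$) and control the increments $\sup_{t\in[t_j,t_{j+1}]}\norm{W(t)-W(t_j)}\le\sup_t\norm{W(t)-W(t_j)}_{\mathrm{HS}}$: the square of the latter is a nonnegative submartingale with expectation $\lesssim N^{1+\mathfrak{b}-\mathfrak{a}-C}$ at $t_{j+1}$, so Doob's $L^p$-inequality with $p$ a fixed power of $N$ plus a Bernstein bound for the resulting weighted $\chi^2$ makes these increments negligible once $C$ is chosen large enough.

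For the stochastic term $L(t):=\ii\int_0^t\dd W(s)\,R(s)$, which is a continuous matrix-valued martingale, the key point is that, using only $\norm{R(s)}\le 2$, its row predictable quadratic variation is the diagonal matrix with entries $N^{-1}\int_0^t\sum_\beta\sigma_{\alpha\beta}^2\norm{e_\beta^*R(s)}^2\,\dd s\le 4tA_{\alpha\alpha}$, and its column quadratic variation is $\int_0^t R(s)^*A\,R(s)\,\dd s$, of norm $\le 4t\norm A$; both are $\lesssim N^{\mathfrak{b}-\mathfrak{a}}$, the same small parameter that governs $W$. The non-commutative Burkholder--Davis--Gundy inequality, combined with Doob's inequality, then gives $\E\sup_{t\le T}\norm{L(t)}^{2p}\lesssim(Cp\log N)^p N^{p(\mathfrak{b}-\mathfrak{a})}$ for every integer $p$, and Markov's inequality with $p$ an appropriate power of $N$ yields $\sup_{t\le T}\norm{L(t)}\le N^{-11\mathfrak{b}}$ with the required probability, again using $\mathfrak{a}\ge 100\mathfrak{b}$. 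Feeding the two bounds into the displayed inequality for $\sup_t\norm{R(t)}$ completes the argument.

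I expect the main obstacle to be exactly this pair of estimates: obtaining genuine \emph{operator-norm} control (the Hilbert--Schmidt norm of $W(T)$ is already $\gg 1$, so it is of no use here), \emph{uniformly} in $t\le T$, and with a stretched-exponential tail. This is where the random-matrix input is essential, through the bound \eqref{A bound} on the row sums $A_{\alpha\alpha}=N^{-1}\sum_\beta\sigma_{\alpha\beta}^2$: it is this bound, via $T=N^{-1+\mathfrak{b}}$ and $\mathfrak{b}\le\mathfrak{a}/100$, that makes the controlling parameter $T\max_\alpha A_{\alpha\alpha}\lesssim N^{\mathfrak{b}-\mathfrak{a}}$ polynomially small with room to spare. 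A secondary subtlety is that $L(t)$ is not a Gaussian functional, so Gaussian concentration does not apply directly and one must instead use the matrix martingale moment inequality, with careful bookkeeping of both the row and the column quadratic variations as indicated above.
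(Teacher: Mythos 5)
Your argument is correct in outline, but it takes a genuinely different route from the paper. The paper does not split off the pure noise: it bounds the full martingale $M(t)=\ii\int_0^t\dd W(s)\,U(s)$ directly (using only $\norm{U}=1$), by computing $\dd\,\tfrac1N\tr\exp\bigl(\theta M(t)^*M(t)\bigr)$ with It{\^o}'s formula, controlling the resulting quadratic forms $\mathcal{Q},\hat{\mathcal{Q}}$ through a singular-value/Riesz--Thorin interpolation lemma that yields the bound $N^{1-\mathfrak{a}}\tr K(\theta)$, closing a Gronwall inequality for the exponential moment (plus a BDG step to get the supremum in $t$), and finishing with Chebyshev at $\theta=N^{(\mathfrak{a}-\mathfrak{b})/2}$; in effect it hand-rolls a matrix Bernstein bound for this particular martingale, self-contained within the paper. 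You instead write $U-I=\ii W(t)+\ii\int_0^t\dd W\,R-\tfrac12\int_0^t A U\,\dd s$ and outsource the two random pieces to off-the-shelf tools: noncommutative Khintchine plus Gaussian concentration for $W(t)$, and a noncommutative BDG/matrix-martingale moment bound for $L(t)=\ii\int_0^t\dd W\,R$. Both proofs run on exactly the same smallness parameter $T\norm{A}\lesssim N^{\mathfrak{b}-\mathfrak{a}}$ from \eqref{A bound} and \eqref{def: T 1}, and your bracket computations (row bracket $\leq 4tA_{\alpha\alpha}$ on the diagonal, column bracket $\int_0^t R^*AR\,\dd s$) are right, so the numerology closes with ample room, just as you say. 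What your route buys is modularity and brevity, at the cost of citing heavier machinery; what the paper's buys is self-containedness and a single argument covering the whole martingale (your split of $M$ into $W+L$ is harmless but unnecessary, since the martingale tool alone handles $\int\dd W\,U$ with $\norm{U}=1$).

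Two steps deserve tightening before this would be complete. First, the inequality $\E\sup_{t\leq T}\norm{L(t)}^{2p}\lesssim (Cp\log N)^pN^{p(\mathfrak{b}-\mathfrak{a})}$ is not a standard quotable statement as written: the Pisier--Xu/Junge--Xu noncommutative BDG inequalities give Schatten-$S_{2p}$ control with dimension-free constants of polynomial growth in $p$, and one must then pay a factor $N^{1/2p}$ (equivalently take $p\gtrsim\log N$) to pass to the operator norm, and invoke Doob for the supremum via the submartingale $\norm{L(t)}$. Since you take $p=N^{c}$ with $c$ a small multiple of $\mathfrak{b}$, these losses are absorbed even if the BDG constant grows like $p$ rather than $\sqrt p$, so the gap is fillable --- either by a careful citation or by an It{\^o}--Gronwall computation on $\tr(L^*L)^p$, which is essentially a power-function version of the paper's trace-exponential argument; a continuous-time matrix Freedman inequality would give the stretched-exponential tail even more directly. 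Second, the grid argument for $\sup_{t\leq T}\norm{W(t)}$ (Doob plus a Bernstein bound for a weighted chi-square) is sketchy as stated, but since all quantities there are Gaussian the increment control over intervals of length $TN^{-C}$ is elementary, so this is only a matter of writing it out.
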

Before proving the theorem, we introduce some notation.  We denote the martingale part of $U(t)$ by
\be	
M(t):=\ii \int_0^t \dd W(t) U(t)\,.
\ee
Therefore we can write $
	U(t)-I=M(t)+ \int_0^t\frac{1}{2}AU(s)\dd s$.
Hence,
\be\label{Ut-I}
	\sup_{0\leq t\leq T}\norm{U(t)-I}\leq \sup_{0\leq t\leq T}\norm{M(t)}+ \frac{T}{2}\norm{A}\,.
\ee
In view of the bound \eqref{A bound} on $A$ and the definition \eqref{def: T 1} of $T$, the second term above is $\mathcal{O}(N^{-\mathfrak{a}+\mathfrak{b}})$. In order to bound the operator norm of $M(t)$, we define 
\ben	
	K_t(\tau):= \exp (\tau M(t)^*M(t))\,,\text{ for } \tau\geq 0,0\leq t \leq T\,.
\ee
For simplicity of notations we omit the dependence of $K$ on $t$.  We shall estimate $\E\tfrac{1}{N} \tr K(\theta)$, which is the exponential moment of the empirical measure of $M(t)^*M(t)$, with parameter $\theta>0$ to be chosen. For matrices $A,B\in\C^{N\times N}$, we define the quadratic forms
\be\label{def: Q}
	\mathcal{Q}(A,B):=  \frac{1}{N}\sum_{i,j} A_{ii}B_{jj}\sigma_{ij}^2\,,\quad \hat {\mathcal{Q}}(A,B):=  \frac{1}{N}\sum_{i,j} A_{ij}B_{ij}\sigma_{ij}^2\,.
\ee
Using It{\^o}'s formula, we find
\be\label{eq: dexp}
\begin{split}
	\dd \frac{1}{N}\tr K(\theta)= & \frac{\theta }{N}\tr(K(\theta)U^* A U)\dd t  + \frac{\theta}{N} \int_0^\theta \left\{\mathcal{Q}(-MK(\tau)U^*, MK(\theta-\tau)U^*)\right.\\
	&+ \mathcal{Q}(UK(\tau)U^*, MK(\theta-\tau)M^*)+ \mathcal{Q}(MK(\tau)M^*, UK(\theta-\tau)U^*) \\
	&+ \left.\mathcal{Q}(-UK(\tau)M^*, UK(\theta-\tau)M^*)\right\}\dd \tau \dd t +\dd R\,.
\end{split}
\ee
Here $\dd R$ is a martingale term, whose quadratic variation process $\langle R\rangle_t$ satisfies
\be
	\dd \langle R\rangle_t= \dfrac{\theta^2}{N^2} \hat{\mathcal{Q}}(-MK(\theta)U^*, MK(\theta)U^*) \dd t\,.
\ee
We require a bound for the quadratic forms $\mathcal{Q}$ and $\hat{\mathcal{Q}}$. The $\ell^r$ norm of an $N$-dimensional vector is defined by,
\be
	\norm{v}_r:=\left(\frac{1}{N}\sum_i \abs{v_i}^r\right)^{\frac{1}{r}}\,, \text{ for } r\in[1,+\infty)\,,v\in\C^N\,.
\ee
For $r=\infty$, we denote $\norm{v}_\infty:= \max_i\abs{v_i}$.  For any matrix $Q$ we define an $N$-dimensional vector
\be
	{s}(Q):=(s_1(Q),\cdots,s_N(Q))\,,
\ee
where $s_1(Q)\geq \cdots\geq s_N(Q)$ are singular values of $Q$.

\begin{lem}
	Let $A$ and $B$ be $N \times N$ square matrices.  Let $s(A)=(s_1(A), \dots, s_N(A))$ and $s(B)=(s_1(B),\dots,s_N(B))$ be the singular values of $A$ and $B$ in decreasing order. Then, for $1\leq r,r' \leq \infty$ satisfying $r^{-1}+r'^{-1}=1$, we have
	
	\be
	\abs{\mathcal{Q}(A,B)}\vee \abs{\hat{\mathcal{Q}}(A,B)}\lesssim N^{2-\mathfrak{a}}\norm{s(A)}_r\norm{s(B)}_{r'}\,.
	\ee
	
\end{lem}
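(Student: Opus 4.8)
The plan is to bound the quadratic forms $\mathcal{Q}(A,B)$ and $\hat{\mathcal{Q}}(A,B)$ by peeling off the factor $\sigma_{ij}^2$, which is uniformly bounded in an appropriate sense, and then reducing to a bilinear estimate that can be handled by von Neumann's trace inequality (or equivalently by the standard duality between Schatten norms). First I would record the key input on the weights: from the definition of $\sigma_{\alpha\beta}$ and the spacing bound \eqref{y space}, for $\abs{\alpha-\beta}\geq N^{\mathfrak{a}}$ we have $\sigma_{\alpha\beta}^2 = \abs{y_\alpha-y_\beta}^{-2} \lesssim N^{-2-\mathfrak{a}} \cdot \abs{\alpha - \beta}^{-2} \cdot N^{\cdots}$; more precisely $\sigma_{\alpha\beta}^2 \lesssim N^2 \abs{\alpha-\beta}^{-2}$ when $\abs{\alpha-\beta}>N^{\mathfrak{c}'}$ and one must be slightly careful for $N^{\mathfrak{a}}\leq\abs{\alpha-\beta}\leq N^{2\mathfrak{c}'}$ where one still gets $\sigma_{\alpha\beta}^2 \lesssim N^{2-2\mathfrak{a}}$. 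In all cases, summing the weights over a row gives $\sum_\beta \sigma_{\alpha\beta}^2 \lesssim N^{2-\mathfrak{a}}$, which will be the source of the $N^{2-\mathfrak{a}}$ factor.

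Next I would treat $\hat{\mathcal{Q}}$ first since it is the cleaner of the two: write
\[
\abs{\hat{\mathcal{Q}}(A,B)} = \Bigl| \frac{1}{N}\sum_{i,j} A_{ij}B_{ij}\sigma_{ij}^2 \Bigr| \leq \frac{1}{N}\Bigl(\max_{i,j}\sigma_{ij}^2 \cdot \#\{(i,j)\}\Bigr)^{1/2}\cdots,
\]
but a cleaner route is to note that $\hat{\mathcal{Q}}(A,B) = \frac{1}{N}\tr(A^\top (\Sigma\circ \bar{B}))$ where $\Sigma$ is the matrix of weights and $\circ$ is the Hadamard product; then apply $\abs{\tr(CD)}\leq \sum_k s_k(C)s_k(D)\leq \norm{s(C)}_{r}\norm{s(D)}_{r'}$ (the discrete Hölder inequality after von Neumann's inequality, with the $\tfrac1N$-normalized $\ell^r$ norms absorbing the $N$ prefactors). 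The Hadamard factor costs a bound on the Schatten norms of $\Sigma\circ B$ in terms of those of $B$: using that $\Sigma$ has all entries bounded by its max and row/column sums bounded by $N^{2-\mathfrak{a}}$, one gets $s_1(\Sigma\circ B)\lesssim N^{2-\mathfrak{a}} s_1(B)$ and more generally control of the full singular value vector via the Schur test / interpolation, yielding the claimed $N^{2-\mathfrak{a}}\norm{s(A)}_r\norm{s(B)}_{r'}$. For $\mathcal{Q}(A,B) = \frac1N\sum_{i,j}A_{ii}B_{jj}\sigma_{ij}^2$, I would write it as $\frac1N \langle a, \Sigma b\rangle$ where $a = (A_{ii})$, $b=(B_{jj})$ are the diagonal vectors, so $\abs{\mathcal{Q}(A,B)}\leq \frac1N \norm{\Sigma}_{\ell^r\to\ell^r}\cdot(\text{something})$; since $\norm{\Sigma}_{op}$ and all its Schatten norms are controlled by the uniform row/column sum bound $N^{2-\mathfrak{a}}$ (Schur test), and since $\abs{A_{ii}}\leq s_1(A)$, $\abs{B_{jj}}\leq s_1(B)$ while more refined $\ell^r$ control of the diagonal follows from majorization of the diagonal by the singular values, one again lands on $N^{2-\mathfrak{a}}\norm{s(A)}_r\norm{s(B)}_{r'}$.

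The main obstacle I anticipate is getting the dependence on $r$ uniform across the whole range $1\leq r\leq\infty$ — in particular, converting the elementary $r=\infty$ bound ($\abs{A_{ii}},\abs{B_{jj}}\leq s_1$) and the $r=2$ bound (Frobenius/Hilbert–Schmidt, where everything is a clean Cauchy–Schwarz) into the full Hölder-dual family without losing powers of $N$. The right tool is Schatten-norm interpolation (Riesz–Thorin on the Hadamard-multiplier operator $B\mapsto \Sigma\circ B$, or on the linear map $\mathrm{diag}\mapsto \Sigma\,\mathrm{diag}$) together with the fact, noted above, that the weight matrix $\Sigma$ is bounded as an operator on every $\ell^r$ by $N^{2-\mathfrak{a}}$ thanks to its uniform row and column sums. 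Once that operator bound is in place the two endpoint cases pin down the constant, and the interior follows by interpolation; the bookkeeping of the $\tfrac1N$ normalizations in the definition of $\norm{\cdot}_r$ is the only remaining fussy point, and it is routine.
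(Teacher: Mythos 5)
Your proposal is correct in substance, but it takes a genuinely different route from the paper. The paper expands $A$ and $B$ in their singular value decompositions, so that both $\mathcal{Q}$ and $\hat{\mathcal{Q}}$ become bilinear forms $\tfrac1N\sum_{k,l}s_k(A)\,\hat\sigma^2_{kl}\,s_l(B)$ in the singular values with new, vector-conjugated kernels $\hat\sigma^2,\tilde\sigma^2$; it then bounds these kernels on $\ell^r$ by Riesz--Thorin, using only the row-sum bound $\sup_\alpha\sum_\beta\sigma^2_{\alpha\beta}\lesssim N^{2-\mathfrak{a}}$ (and, for $\hat{\mathcal{Q}}$, the $\ell^2\to\ell^2$ bound on $\sigma^2$ itself). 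You instead avoid the SVD expansion: for $\mathcal{Q}$ you treat $\tfrac1N\langle \mathrm{diag}(A),\Sigma\,\mathrm{diag}(B)\rangle$ via the Schur test plus $\ell^1$--$\ell^\infty$ interpolation on $\Sigma$, combined with the weak majorization of $(|A_{ii}|)$ by $s(A)$ to convert diagonal $\ell^r$ norms into Schatten data; for $\hat{\mathcal{Q}}$ you use von Neumann's trace inequality together with a Schatten-class Hadamard-multiplier bound $\norm{s(\Sigma\circ B)}_{r'}\lesssim \norm{\Sigma}\,\norm{s(B)}_{r'}$, obtained by interpolating the $S_1/S_\infty$ endpoints. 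Both routes rest on exactly the same quantitative input (the row-sum bound, coming from \eqref{y space} and the cutoff $\abs{\alpha-\beta}\geq N^{\mathfrak{a}}$) and the same interpolation principle; the paper's version is more self-contained (nothing beyond H{\"o}lder and Riesz--Thorin on $\ell^r$), whereas yours imports standard matrix-analysis facts but gives a somewhat cleaner structural picture. Two points of loose language you should fix before writing this up: the phrase ``all its Schatten norms are controlled by $N^{2-\mathfrak{a}}$'' is wrong as stated --- what you need (and what is true) is that the $\ell^r\to\ell^r$ operator norms of $\Sigma$ are bounded by the maximal row/column sum for every $1\leq r\leq\infty$; and the $S_\infty$ endpoint of the Hadamard-multiplier bound is not the Schur test but the submultiplicativity $\norm{\Sigma\circ B}\leq\norm{\Sigma}\,\norm{B}$ of the spectral norm under Hadamard products (e.g.\ via the embedding of $\Sigma\circ B$ as a compression of $\Sigma\otimes B$), after which duality and interpolation give the $S_{r'}$ bound; the normalization bookkeeping you flag indeed works out.
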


\begin{proof} Let the singular value decompositions of $A$ and $B$ be given by
	\be
		A_{ij}= \sum_k s_k(A)u_{ki}v_{kj}\,,\quad B_{ij}=\sum_k s_k(B)\hat{u}_{ki}\hat{v}_{kj}\,.
	\ee
	Therefore, 
	\be	
	\frac{1}{N}\sum_{i,j} A_{ii}B_{jj}\sigma^2_{ij} =\frac{1}{N} \sum_{i,j,k,l} s_k(A)u_{ki}v_{ki}\sigma^2_{ij} s_l(B)\hat u_{lj} \hat v_{lj}\,.
	\ee
	Denote $\hat \sigma_{kl}^2 :=\sum_{i,j}u_{ki}v_{ki}\sigma^2_{ij} \hat u_{lj} \hat v_{lj}$ and $\hat{\sigma}^2:=(\hat{\sigma}_{kl}^2)_{1\leq k,l\leq N}$. Then, by H{\"o}lder's inequality, 
	\be	
\absa{	\frac{1}{N}\sum_{i,j} A_{ii}B_{jj}\sigma^2_{ij}} =\absa{\frac{1}{N} \sum_{k,l} s_k(A)\hat{\sigma}_{kl}s_l(B)}\leq \norm{\hat{\sigma}^2 s(A)}_r\norm{s(B)}_{r'}\,.
	\ee
	It is sufficient to prove that $\norm{\hat \sigma^2}_{l^r\rightarrow l^r} \lesssim N^{2-\mathfrak{a}}$.  By Riesz-Thorin theorem, it is sufficient to prove for $r=1$ and $r=\infty$.  For $r=1$, we have
	\begin{equation}
	\norm{\hat \sigma^2}_{l^1\rightarrow l^1} = \sup_k \sum_l \absa{\hat{\sigma}_{kl}}^2\leq \sup_{i}\sum_j \sigma_{ij}^2 \lesssim N^{2-\mathfrak{a}}\,.
	\end{equation}
	Obviously this bound also holds for $(\hat{\sigma}^2)^\top$ in place of $\hat{\sigma}^2$, and we therefore get the $\norm{\hat \sigma^2}_{l^\infty\rightarrow l^\infty}$ bound by duality,
	\begin{equation}
	\norm{\hat \sigma^2}_{l^\infty\rightarrow l^\infty} = \norm{(\hat{\sigma^2})^\top}_{l^1\rightarrow l^1}\lesssim N^{2-\mathfrak{a}}\,.
	\end{equation}
	This concludes the proof of $\absa{	\frac{1}{N}\sum_{i,j} A_{ii}B_{jj}\sigma^2_{ij}} \lesssim N^{2-\mathfrak{a}}\norm{s(A)}_r\norm{s(B)}_{r'}$.   
	
	In order to prove a bound for $\abs{\hat{\mathcal{Q}}(A,B)}$, we assume the same spectral decompositions of $A$ and $B$, and write
	\be
		\abs{\hat{\mathcal{Q}}(A,B)}= \frac{1}{N} \sum_{i,j,k,l} s_k(A)u_{ki}v_{kj}\sigma^2_{ij} s_l(B)\hat u_{li} \hat v_{lj}\,.
	\ee
	The question reduces to estimating the $l^r\to l^r$ norm of the matrix $(\tilde{\sigma}^2_{kl})_{k,l}$ given by
	\be
		\tilde{\sigma}^2_{kl}:= \sum_{i,j} u_{ki}v_{kj}\sigma^2_{ij} \hat u_{li} \hat v_{lj}\,.
	\ee
	Again, by Riesz-Thorin Theorem, it is sufficient to prove the $l^1\to l^1$ norm and $l^\infty \to l^\infty$ norm. It is easy to see that
	\be
		\norm{\tilde{\sigma}^2}_{l^1\to l^1} \leq \sup_{l}\sum_{k} \tilde{\sigma}_{lk}^2\leq \norm{\sigma^2_{ij}}_{l^2\to l^2}\,,
	\ee
	as well as $\norm{\tilde{\sigma}^2}_{l^\infty\to l^\infty}\leq  \norm{\sigma^2_{ij}}_{l^2\to l^2}$.  Again, by Riesz-Thorin interpolation, we have
	\ben
		\norm{\sigma^2_{ij}}_{l^2\to l^2}\lesssim \sqrt{\norm{\sigma^2_{ij}}_{l^1\to l^1}\norm{\sigma^2_{ij}}_{l^\infty \to l^\infty}}\lesssim \sup_{i}\sum_j \sigma_{ij}^2 \lesssim N^{2-\mathfrak{a}}\,.
	\ee
	Hence we have
	\ben
		\abs{\hat{\mathcal{Q}}(A,B)}\leq \norm{\tilde{\sigma}^2}_{l^r\to l^r}\norm{s(A)}_r\norm{s(B)}_{r'}\lesssim N^{2-\mathfrak{a}}\norm{s(A)}_r\norm{s(B)}_{r'}\,.
	\ee
	This concludes the proof.
\end{proof}

The lemma enables us to estimate the right hand side of equation \eqref{eq: dexp}, which is the key to proving Theorem \ref{thm: Ut-I}.

\begin{proof}[Proof of Theorem \ref{thm: Ut-I}]
We start with estimating the right hand side of equation \eqref{eq: dexp}.  By the above lemma, the first term $\mathcal{Q}(-MK(s)U^*, MK(\theta-s)U^*)$ in the integrand satisfies
\be\label{ineq: QMK}
	\abs{\mathcal{Q}(-MK(\tau)U^*, MK(\theta-\tau)U^*)}\lesssim N^{2-\mathfrak{a}} \norm{s(MK(\tau))}_r \norm{s(MK(\theta -\tau))}_{r'}\,.
\ee
Here we have used the fact that a matrix's singular values are invariant under multiplication of a unitary matrix.  Assume that the singular decomposition of $M$ is 
\ben
	M= U_1SU_2\,.
\ee
A simple observation is that $MK(\tau)$ has singular decomposition
\ben
	MK(\tau) = U_1 S \e^{\tau S^2} U_2\,.
\ee
Note that $M(t)= U(t)-I-\int_0^t \frac{1}{2}AU(t)\dd t$ implies a crude bound $\norm{M}\leq 3$, and so $\norm{S}\leq 3$.  Therefore, the $k$-th singular value of $MK(\tau)$ satisfies
\ben
	s_k(MK(\tau)) \leq 3\e^{\tau s_k(M)^2}=3s_k(K(\tau))\,.
\ee
Going back to \eqref{ineq: QMK}, we see that
\be
	\abs{\mathcal{Q}(-MK(\tau)U^*, MK(\theta-\tau)U^*)}\lesssim N^{2-\mathfrak{a}} \norm{s(K(\tau))}_r \norm{s(K(\theta -\tau))}_{r'}\,.
\ee
Now we choose $r=\theta/\tau$ and $r'=\theta/(\theta-\tau)$, then $\norm{s(K(\tau))}_r= \norm{s( K(\theta))}_1^{1/r}$ and $\norm{s(K(\theta -\tau))}_{r'}=\norm{s(K(\theta))}_1^{1/r'}$.  The above inequality yields
\be
	\abs{\mathcal{Q}(-MK(\tau)U^*, MK(\theta-\tau)U^*)}\lesssim N^{2-\mathfrak{a}} \norm{s(K(\theta))}_1 = N^{1-\mathfrak{a}}\tr K(\theta) \,.
\ee
By similar arguments, we get the same bound for each term in the integrand on the right hand side of \eqref{eq: dexp}. Therefore, \eqref{eq: dexp} yields
\ben
	\frac{1}{N} \tr K(\theta)\lesssim \int_0^t\frac{\theta^2}{N}\left(\tr(K(\theta)U^*AU)+ N^{1-\mathfrak{a}}\tr K(\theta)\right)\dd s+\int_0^t\dd R\,.
\ee 
By \eqref{A bound} we have $\tr(K(\theta)U^*AU)\leq N^{1-\mathfrak{a}} \tr(K(\theta))$.  Therefore, the above inequality yields
\be\label{ineq: trK}
	\frac{1}{N} \tr K(\theta)\lesssim {\theta^2}N^{1-\mathfrak{a}} \int_0^t \frac{1}{N}\tr K(\theta)\dd s+\int_0^t \dd R\,.
\ee 
If we take the expectation of the above inequality, the martingale term vanishes and we derive,
\ben
\E \frac{1}{N} \tr K(\theta)\lesssim {\theta^2}N^{1-\mathfrak{a}} \int_0^t \E\frac{1}{N}\tr K(\theta)\dd s\,.
\ee 
Hence we obtain by Gronwall's inequality,
\ben
	\E \frac{1}{N} \tr K(\theta) \leq \e^{c\theta^2 N^{1-\mathfrak{a}}t}\,.
\ee
In order to obtain an estimate that holds for all time we return to the martingale term and bound its quadratic variation:
\ben
	\langle R\rangle_t \leq \int_0^t\theta^2 N^{-\mathfrak{a}} \tr K(2\theta) \dd s\,.
\ee
Denote $R^*(t)= \sup_{0\leq s\leq t} \abs{R(s)}$. By the BDG inequality we have, for any $\theta \leq N^{(\mathfrak{a}-\mathfrak{b})/2} $,
\ben
	\E R^*(t) \lesssim \E \sqrt {\int_0^t\theta^2 N^{-\mathfrak{a}} \tr K(2\theta) \dd s}\lesssim  \sqrt{\theta^2N^{1-\mathfrak{a}}\int_0^t\e^{c\theta^2 N^{1-\mathfrak{a}}s} \dd s} \leq \theta N^{-(\mathfrak{a}-\mathfrak{b})/2}\,,
\ee
which is bounded by a constant. Denote 
\ben
Q(\theta, t):= \sup_{0\leq s \leq t} \frac{1}{N} \tr K(\theta,s),.
\ee
Therefore, \eqref{ineq: trK} yields
\be\label{ineq: EQ}
	\E Q(\theta,t)\lesssim\theta^2N^{1-\mathfrak{a}}\int_0^t \E Q(\theta,s)\dd s +1 \,.
\ee
Applying Gronwall's inequality, we have
\ben
\E Q(\theta,T)\leq \e^{c\theta^2 N^{1-\mathfrak{a}} T}=\e^{c\theta^2 N^{\mathfrak{b}-\mathfrak{a}}}\,,
\ee
By Chebyshev's inequality, for any $t\in[0,T]$ and $\delta>0$
\ben
	\P[\sup_{0\leq t\leq T}\norm{M(t)}>\delta]\leq \P \left[ Q(\theta,T)> \frac{1}{N}\e^{\theta\delta^2}\right]\leq N\e^{c\theta^2N^{\mathfrak{b}-\mathfrak{a}}-\theta\delta^2}\,.
\ee
Take $\theta=N^{(\mathfrak{a}-\mathfrak{b})/2}$, $\delta=\theta^{-1/3}$. The above estimate gives
\ben
	\P[\sup_{0\leq t\leq T}\norm{M(t)}>N^{(\mathfrak{b}-\mathfrak{a})/6}]\leq \e^{c-\theta^{1/3}}\leq \e^{-N^{(\mathfrak{a}-\mathfrak{b})/7}}\,,\text{ for $N$ large enough.}
\ee
By the assumption of the theorem, $\mathfrak{b} \leq \mathfrak{a}/10$.  Hence, 
\ben
\P[\sup_{0\leq t\leq T}\norm{M(t)}>N^{-11\mathfrak{b}}]\leq \e^{-N^{10\mathfrak{b}}}\,,\text{ for $N$ large enough.}
\ee
This estimate together with \eqref{Ut-I} concludes the proof of the  estimate in the theorem.

\end{proof}

Note that for any $t_0\in[0,T]$, the process 
\be\label{def: hatU}
	\hat{U}(t):= U(t)U(t_0)^*
\ee
 satisfies the same SDE as $U(t)$ does,
\be
	\dd \hat{U}(t)=\ii \dd W(t) \hat{U}(t) -\frac{1}{2}A\hat{U}(t)\dd t\,.
\ee
Using the same argument, we can actually show a bound for $U(t)-U(t_0)$, for any $t\in[t_0,T]$.

\begin{thm}\label{thm: Us-Ut}
	For $N$ large enough we have the following. For any $0\leq t_0\leq t\leq T$, $\abs{t-t_0}\leq 1/N$, 
	\ben
	\P\left[\sup_{t_0\leq s\leq t} \norm{\hat{U}(s)-\hat{U}(t_0)}\geq ( {N(t-t_0)})^{1/4}\right]\leq \e^{-N^{\mathfrak{a}/3}}\,.
	\ee
	Also, for any $0\leq t_0\leq t\leq T$, $\abs{t-t_0}\leq \vartheta<1/N$, we have
	\ben
	\P\left[\sup_{t_0\leq s\leq t} \norm{\hat{U}(s)-\hat{U}(t_0)}\geq \vartheta^{9/20}\right]\leq \e^{-c_N\vartheta^{-1/10}}\,.
	\ee
\end{thm}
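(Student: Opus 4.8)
The plan is to observe that the whole statement follows by running the proof of Theorem~\ref{thm: Ut-I} on the interval $[t_0,t]$ instead of $[0,T]$. As recorded in \eqref{def: hatU}, the process $\hat U(t)=U(t)U(t_0)^*$ solves the same SDE as $U$, with initial datum $\hat U(t_0)=I$. Writing $\hat M(s):=\ii\int_{t_0}^s\dd W(r)\,\hat U(r)$ for its martingale part, one has $\hat U(s)-\hat U(t_0)=\hat M(s)-\int_{t_0}^s\tfrac12 A\,\hat U(r)\,\dd r$, hence
\[
\sup_{t_0\le s\le t}\norm{\hat U(s)-\hat U(t_0)}\ \le\ \sup_{t_0\le s\le t}\norm{\hat M(s)}\ +\ \tfrac{t-t_0}{2}\norm{A}.
\]
By \eqref{A bound} the deterministic drift is $\lesssim(t-t_0)N^{1-\mathfrak{a}}=N^{-\mathfrak{a}}\,(N(t-t_0))$, which for $t-t_0\le 1/N$ is negligible compared with both claimed thresholds; so everything reduces to estimating $\sup_{t_0\le s\le t}\norm{\hat M(s)}$.

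For that I would repeat the exponential-moment argument verbatim. With $\hat K_s(\tau):=\exp(\tau\,\hat M(s)^*\hat M(s))$, the It\^o identity \eqref{eq: dexp} and the quadratic-form bound of the singular-value lemma above carry over line for line with $(M,K,R)$ replaced by $(\hat M,\hat K,\hat R)$ and $[0,t]$ by $[t_0,t]$. Taking expectations (the martingale term has zero mean) and using $\tfrac1N\tr\hat K_{t_0}(\theta)=1$, Gronwall gives, for each fixed $s$,
\[
\E\,\tfrac1N\tr\hat K_s(\theta)\ \le\ \exp\!\big(c\,\theta^2 N^{1-\mathfrak{a}}(s-t_0)\big),\qquad \theta\ge1.
\]
Feeding this back into the BDG bound for $\langle\hat R\rangle$, exactly as in the proof of Theorem~\ref{thm: Ut-I}, upgrades it to $\E\sup_{t_0\le s\le t}\tfrac1N\tr\hat K_s(\theta)\lesssim\exp(c'\theta^2 N^{1-\mathfrak{a}}(t-t_0))$, valid as soon as $\theta^2 N^{1-\mathfrak{a}}(t-t_0)\lesssim1$. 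The essential difference from Theorem~\ref{thm: Ut-I} is that here $t-t_0\le 1/N\ll T=N^{-1+\mathfrak{b}}$, so this admissible range for $\theta$ is much larger — it reaches order $(N^{1-\mathfrak{a}}(t-t_0))^{-1/2}$, which is at least $\sim N^{\mathfrak{a}/2}$ — and that is precisely what makes the finer thresholds attainable.

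Then, as in the last display of the proof of Theorem~\ref{thm: Ut-I}, $\tfrac1N\tr\hat K_s(\theta)\ge\tfrac1N\e^{\theta\norm{\hat M(s)}^2}$ and Chebyshev give
\[
\P\Big[\sup_{t_0\le s\le t}\norm{\hat M(s)}>\delta\Big]\ \lesssim\ N\exp\!\big(c'\theta^2 N^{1-\mathfrak{a}}(t-t_0)-\theta\delta^2\big).
\]
For the first assertion I put $s^*:=t-t_0\le1/N$, take $\theta$ a small constant multiple of $(N^{1-\mathfrak{a}}s^*)^{-1/2}$ (admissible, and $\ge1$ for large $N$) and $\delta=\tfrac12(Ns^*)^{1/4}$; then $\theta^2 N^{1-\mathfrak{a}}s^*\asymp1$ while $\theta\delta^2\asymp N^{\mathfrak{a}/2}$, so the right side is $\lesssim N\e^{O(1)-cN^{\mathfrak{a}/2}}\le\e^{-N^{\mathfrak{a}/3}}$; adding back the drift ($\lesssim N^{-\mathfrak{a}}(Ns^*)^{1/4}\le\tfrac12(Ns^*)^{1/4}$, using $Ns^*\le1$) closes it. For the second assertion one makes the analogous choice with $\delta$ a small multiple of $\vartheta^{9/20}$ and $\theta$ near its admissible maximum, using $t-t_0\le\vartheta$; carrying out the exponent arithmetic and absorbing the polynomial-in-$N$ losses into $c_N$ gives $\P[\,\cdot\,]\le\e^{-c_N\vartheta^{-1/10}}$. (For very small $\vartheta$ one may alternatively use $\norm{\hat M}\le\norm{\hat M}_F$ together with Markov applied to a high moment of $\norm{\hat M(t)}_F^2$, whose expectation is $\lesssim N^{2-\mathfrak{a}}(t-t_0)$, to get the same stretched-exponential form.)

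The step I expect to be the main obstacle is exactly the control of the auxiliary martingale $\hat R$ in the It\^o expansion of $\tr\hat K(\theta)$: one cannot take the supremum inside the exponential-moment bound directly, and the standard remedy (prove the fixed-time bound, then bootstrap via BDG) forces the constraint $\theta\lesssim(N^{1-\mathfrak{a}}(t-t_0))^{-1/2}$. One must then check that this admissible range still accommodates the $(\theta,\delta)$ pair needed for each threshold — immediate for the first, and only after some exponent bookkeeping (where the freedom to let $c_N$ depend on $N$ is essential) for the second.
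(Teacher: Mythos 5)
Your treatment of the first estimate is correct and is essentially the paper's own argument: pass to $\hat U(s)=U(s)U(t_0)^*$, rerun the exponential-moment/Gronwall/BDG scheme of Theorem \ref{thm: Ut-I} on $[t_0,t]$ to get $\E \exp(\theta \sup_{t_0\le s\le t}\norm{\hat M(s)}^2)\le N\e^{c\theta^2N^{1-\mathfrak{a}}(t-t_0)}$, then Chebyshev with $\theta\asymp (N^{1-\mathfrak{a}}(t-t_0))^{-1/2}$ and $\delta\asymp (N(t-t_0))^{1/4}$, so that $\theta\delta^2\asymp N^{\mathfrak{a}/2}$; this is exactly the choice made in the paper, and your drift bookkeeping for this threshold is fine.

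For the second estimate your main route does not close, and the step you deferred ("exponent bookkeeping") is precisely where it fails. With $\theta$ at the admissible maximum $(N^{1-\mathfrak{a}}\vartheta)^{-1/2}$ and $\delta$ a multiple of $\vartheta^{9/20}$, the gain in the Chebyshev exponent is $\theta\delta^2\asymp N^{(\mathfrak{a}-1)/2}\vartheta^{2/5}$, which tends to $0$ as $\vartheta\to0$ with $N$ fixed — it does not produce anything like $\vartheta^{-1/10}$, and no choice of $c_N$ repairs a quantity that degenerates as $\vartheta\to0$. Even ignoring the admissibility cap and optimizing $c\theta^2N^{1-\mathfrak{a}}\vartheta-\theta\vartheta^{9/10}$ over all $\theta>0$ (the optimum is at $\theta\asymp\vartheta^{-1/10}/N^{1-\mathfrak{a}}$) only yields $\exp(-c\,\vartheta^{4/5}/N^{1-\mathfrak{a}})$, which does not even tend to zero as $\vartheta\to0$; so the quadratic-in-$\theta$ bound \eqref{ineq: expmoment}, used through Chebyshev in the way you describe, cannot give $\e^{-c_N\vartheta^{-1/10}}$. (The paper's own proof also runs Chebyshev on \eqref{ineq: expmoment} and "optimizes in $\theta$", so your plan mirrors its route, but the arithmetic you assert is not what comes out of that optimization.) By contrast, your parenthetical fallback does work and should be the actual argument for this part: the bracket of each entry of $\hat M$ is bounded deterministically by $C N^{1-\mathfrak{a}}\vartheta$ (unitarity of $\hat U$ plus the bound on the $\sigma_{\alpha\beta}$), so BDG and Minkowski give $\E\norm{\hat M}_F^{2p}\le (CpN^{q}\vartheta)^p$ for some fixed $q$, and Markov with $p\asymp (N^{q}\vartheta^{1/10})^{-1}$ gives $\P[\sup_{t_0\le s\le t}\norm{\hat M(s)}\ge \tfrac12\vartheta^{9/20}]\le \e^{-c_N\vartheta^{-1/10}}$, which is the claimed form since the constant is allowed to depend on $N$. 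Note also that for this threshold the drift term $\lesssim N^{1-\mathfrak{a}}\vartheta\log N$ is smaller than $\tfrac12\vartheta^{9/20}$ only once $\vartheta$ is small depending on $N$ (since $\mathfrak{a}<9/20$, it is not negligible for $\vartheta$ near $1/N$), so your blanket claim that the drift is negligible "for both thresholds" needs this qualification; this is harmless in the fixed-$N$, $\vartheta\to0$ regime where the second estimate is actually used.
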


\begin{proof}
	Let $\hat{U}(t)$ be defined as in \eqref{def: hatU}. Denote
	\be
		\hat{M}(t):= \int_{t_0}^t \ii \dd W(t) \hat{U}(t)\,.
	\ee
	 Define $\hat{Q}(\theta,t):=\sup_{t_0\leq s\leq t} \frac{1}{N}\tr \exp\left( \theta \hat{M}^*\hat{M}\right)$. Since $(\hat{U}(t_0+s),\hat{M}(t_0+s))_{s\geq 0}$ has the same distribution as $(U(s),M(s))$ does, the argument in the proof of Theorem \ref{thm: Ut-I} holds for $(\hat{U},\hat{M},\hat{Q})$ in place of $(U,M,Q)$, up to \eqref{ineq: EQ}.  Therefore, according to Gronwall's inequality,
	 \ben
	 \E Q(\theta,t)\leq \e^{c\theta^2 N^{1-\mathfrak{a}} (t-t_0)}\,,
	 \ee
	 In other words,
	 \be\label{ineq: expmoment}
		\E  \exp (\theta \sup_{t_0\leq s\leq t}\norm{\hat{M}}^2) \leq N\e^{c\theta^2N^{1-\mathfrak{a}}(t-t_0)}\,.
	\ee
	Take $\theta=N^{\mathfrak{a}/2}(N(t-t_0))^{-1/2}$, we have
	\ben
		\P\left[\sup_{t_0\leq s\leq t} \norm{\hat{M}}\geq\frac{1}{2}( {N(t-t_0)})^{1/4}\right]\leq N\e^{c-N^{\mathfrak{a}/2}}\,.
	\ee
	On the other hand, note that $\hat{U}(s)-\hat{U}(t_0) = \hat{M}(s) - \int_{t_0}^{s} \frac{1}{2}A \hat{U}(t)\dd t$, and that $\norm{\int_{t_0}^{s} \frac{1}{2}A \hat{U}(t)\dd t}\lesssim \abs{s-t_0} N^{1-\mathfrak{a}}\log N$. Therefore, for $\abs{t-t_0}\leq N^{-2}$, we have
	\ben
		\P\left[\sup_{t_0\leq s\leq t} \norm{\hat{U}(s)-\hat{U}(t_0)}\geq( {N(t-t_0)})^{1/4}\right]\leq N\e^{c-N^{\mathfrak{a}/2}}\,.
	\ee
	Note that the number on the right hand side does not depend on $t_0$ or $t$. 	Therefore, for $N$ large enough, we have, for any $0\leq t_0\leq t\leq T$ such that $\abs{t-t_0}\leq 1/N$, 
	\ben
		\P\left[\sup_{t_0\leq s\leq t} \norm{\hat{U}(s)-\hat{U}(t_0)}\geq ( {N(t-t_0)})^{1/4}\right]\leq \e^{-N^{\mathfrak{a}/3}}\,.
	\ee
	This concludes the first estimate in the theorem.

	 To prove the second estimate, we use the inequality \eqref{ineq: expmoment} with Chebyshev inequality to see
	 \ben
	 \P[\sup_{t_0\leq s\leq t}\norm{\hat{M}} \geq \frac{1}{2}\vartheta^{9/20}] \leq N\e^{c_N\vartheta\theta^2- \theta\vartheta^{9/10}}\,.
	 \ee
	 Optimizing in $\theta$, we find 
	 	\ben
	 	 \P[\sup_{t_0\leq s\leq t}\norm{\hat{M}} \geq \frac{1}{2}\vartheta^{9/20}] \leq N\e^{-c_N\vartheta^{-1/5}}\,.
	 	\ee
	 On the other hand, note that $\hat{U}(s)-\hat{U}(t_0) = \hat{M}(s)- \int_{t_0}^{s} \frac{1}{2}A \hat{U}(t)\dd t$, and that $\norm{\int_{t_0}^{s} \frac{1}{2}A \hat{U}(t)\dd t}\lesssim N^{1-\mathfrak{a}}\log N\vartheta\leq \frac{1}{2}\vartheta^{9/20}$. Therefore
	 	 	\ben
	 	 	\P[\sup_{t_0\leq s\leq t}\norm{\hat{U}(s)-\hat{U}(t_0)} \geq \vartheta^{9/20}] \leq N\e^{-c_N\vartheta^{-1/5}}\,.
	 	 	\ee
	 	 The second estimate in the theorem follows, after absorbing $N$ in the exponential. 
\end{proof}

\subsection{Local law near the edges      }\label{subsec: local law edge}
In this subsection, we prove some estimates on the quantities $(w_{\alpha k})$ and $(\ga_{ij})$ that appear in the coefficients in \eqref{eq: dla}.  
Recall that 
\ben
w_{\beta k}=\sum_{\alpha} U_{\beta\alpha}a_{\alpha k}\,,
\ee
and that $(a_{\alpha k})_{1\leq \alpha \leq N}$ is the $k$-th eigenvector of 
\ben
\tilde{H}(t):=V^* XV+U(t)^* Y U(t)+(T-t)\hat{Y}\,.
\ee
Therefore, $(w_{\beta k})_{1\leq \beta\leq N}$ is the $k$-th eigenvector of
\ben 
\cH(t):=U(t)V^*XVU(t)^* + Y + (T-t) U(t)\hat{Y}U(t)^*\,.
\ee
Denote $\bar{V}(t):= VU(t)^*$.  Then we can write
\ben 
\cH(t)=\bar{V}(t)^*X\bar{V}(t) + Y + (T-t) U(t)\hat{Y}U(t)^*\,.
\ee
In order to get upper bounds for $\abs{w_{\alpha k}}$, we look at the Green's function defined by
\ben
\cG(z,t):= (\cH(t)-z)^{-1}\,,\forall z\in\C^+\,.
\ee
An important observation is that for any $t\geq 0$, the matrix  $\bar{V}(t)$ is Haar-distributed on the unitary group, and independent from $U(t)$. Recall that in the last subsection we proved $\norm{U(t)-I}\ll 1$, therefore, the last term of $\cH(t)$ is approximately $(T-t)\hat{Y}$.  We write
\be\label{eq: cH1}
\cH(t)=\bar{V}(t)X\bar{V}(t)+Y+(T-t)\hat{Y} + (T-t)(U(t)\hat{Y}U(t)^*-\hat{Y})\,.
\ee
In view of \eqref{hatY bound} and Theorem \ref{thm: Ut-I}, the last term above satisfies
\ben
\P\left[\sup_{0\leq t\leq T}\norm{(T-t)(U(t)\hat{Y}U(t)^*-\hat{Y})}\geq N^{-1-8\mathfrak{b}}\right]\leq \e^{-N^{10\mathfrak{b}}}\,.
\ee
Therefore, instead of $\mathcal{H}(t)$, we consider the matrix
\ben
\hat{\cH}(t):=\bar{V}(t)^* X\bar{V}(t)+ Y+ (T-t)\hat{Y}\,,
\ee
where $\bar{V}(t):= V U(t)^*$. We define as before,
\ben
	\cG(z,t):= (\cH(t)-z)^{-1}\,,\qquad \hat{\cG}(z,t):= (\hat{\cH}(t)-z)^{-1}\,.
\ee
Our strategy is to first prove an upper bound for $\hat{\cG}_{ii}(z,t)$, and then derive from this an upper bound for $\cG_{ii}(z,t)$, which will give upper bounds for $w_{\alpha k}$ and $\ga_{ij}$.  Since $\norm{X}\leq K+1$ except for on an event with probability $\e^{-\e^{N/2}}$, we will simply assume $\norm{X}\leq K+1$ in this subsection, and this will not effect the conclusions of any theorem in this subsection.

For brevity we denote
\be\label{def: barY}
	\bar{Y}(t):= Y+(T-t)\hat{Y}\,,
\ee
so that $\hat{\cH}(t)= \bar{V}(t)^* X \bar{V}(t)+\bar{Y}(t)$. For reasons that we will see later on, we also need to bound $\mathsf{G}(z,t):= (\mathsf{H}(t)-z)^{-1}$ where
\ben
	 \mathsf{H}(t):=X+\bar{V}(t)\bar Y(t)\bar{V}(t)^*\,.
\ee
In order to bound $\hat{\cG}_{ii}(z,t)$, we use the following concentration estimate on the unitary group, which is a consequence of the Gromov-Milman theorem.  We consider the metric on the unitary group $\mathcal{U}_N$ that is induced by the Frobenius norm $|| \cdot ||_F$ on $\C^{N\times N}$.  We will consider Lipschitz functions on $\mathcal{U}_N$ where the Lipschitz constant is defined with respect to $|| \cdot ||_F$.

\begin{prop}\label{prop: Gromov}
	Let $g:\mathcal{U}_N\to \C$ be a Lipschitz function with Lipschitz constant $L$. Let $\P$ denote the (normalized) Haar measure on $\mathcal{U}_N$ and $\E$ denote the expectation with respect to the Haar measure.   Assume $\E[g] =0$.  Then, there exist $c>0$, $c_1>0$ that do not depend on $N$ such that
	\ben
		\E \exp(tg)\leq \exp(ct^2L^2/N)\,,\forall t\in\R\,.
	\ee
	As a consequence,
	\ben
		\P[g\geq \delta]\leq \exp(-c_1 N\delta^2/L^2)\,,\forall \delta>0\,.
	\ee
\end{prop}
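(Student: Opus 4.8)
The plan is to deduce this from the standard concentration of measure phenomenon on the unitary group, which in turn follows from the Gromov--Milman / Ricci curvature bound for $\mathcal{U}_N$. First I would recall that $\mathcal{U}_N$, equipped with the Riemannian metric induced by the Frobenius (Hilbert--Schmidt) inner product $\langle A, B\rangle = \operatorname{Re}\tr(A^*B)$, is a compact Lie group whose Ricci curvature is bounded below by $cN$ for a universal constant $c>0$; this is a classical computation (see, e.g., the discussion of the Bakry--\'Emery criterion / the concentration on $\SU_N$ in Anderson--Guionnet--Zeitouni \cite{AGZ}, Section 4.4, or Meckes' monograph on the eigenvalues of random matrices). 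The key point is that the geodesic metric $d_g$ on $\mathcal{U}_N$ is comparable to the restriction of the Frobenius metric, with $\|U-U'\|_F \leq d_g(U,U') \leq \frac{\pi}{2}\|U-U'\|_F$, so a function that is $L$-Lipschitz for $\|\cdot\|_F$ is $L$-Lipschitz for $d_g$ as well (the constant $\pi/2$ only helps).

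The main step is then to invoke the log-Sobolev inequality on $\mathcal{U}_N$: because $\operatorname{Ric} \geq cN$, the normalized Haar measure $\P$ satisfies a logarithmic Sobolev inequality with constant $\kappa = O(1/N)$, i.e. $\operatorname{Ent}_{\P}(f^2) \leq \frac{2}{cN}\int |\nabla f|_g^2 \, d\P$. By the Herbst argument, any $1$-Lipschitz (w.r.t. $d_g$) function $h$ with $\E[h]=0$ satisfies $\E e^{th} \leq e^{t^2/(2cN)}$ for all $t\in\R$. Applying this to $h = g/L$ (which is $1$-Lipschitz for $d_g$ since $g$ is $L$-Lipschitz for $\|\cdot\|_F$ and $d_g \geq \|\cdot\|_F$) gives $\E \exp(t g/L) \leq \exp(t^2/(2cN))$, hence $\E\exp(sg) \leq \exp(c' s^2 L^2 / N)$ after relabeling $s = t/L$ and setting $c' = 1/(2c)$; this is the first display with the stated form of the constant. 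The tail bound is then the routine exponential Chebyshev / Herbst optimization: $\P[g \geq \delta] \leq e^{-s\delta}\,\E e^{sg} \leq \exp(c' s^2 L^2/N - s\delta)$, and choosing $s = \delta N/(2c'L^2)$ yields $\P[g\geq\delta] \leq \exp(-c_1 N\delta^2/L^2)$ with $c_1 = 1/(4c')$.

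The only genuine subtlety — and the step I would be most careful about — is the curvature lower bound and the accompanying log-Sobolev constant: on $\U_N$ itself (as opposed to $\SU_N$) there is a flat central $\U_1$ direction, so strictly one should either work on $\SU_N$ and handle the determinant separately, or observe that the central direction is one-dimensional and contributes only an $O(1)$ (dimension-independent) correction to the concentration estimate, which is harmless here since we only claim a bound of the form $e^{ct^2L^2/N}$ with $c$ independent of $N$. I would state this as a black box citing \cite{AGZ} (Theorem H-ish / Corollary 4.4.28 and surrounding material on concentration for Haar measure), since the proposition as stated asks only for the existence of $N$-independent constants $c, c_1$, which is exactly what the standard concentration inequality on $\mathcal{U}_N$ provides. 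No further model-specific input is needed.
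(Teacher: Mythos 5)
The paper does not actually prove this proposition: it is invoked as a known consequence of the Gromov--Milman concentration phenomenon for Haar measure, so your task was to supply the standard argument, and your log-Sobolev-plus-Herbst route (with the Frobenius-vs-geodesic metric comparison and the exponential Chebyshev step) is indeed the standard proof and matches what the paper implicitly cites. The one point that needs repair is your treatment of the center of $\mathcal{U}_N$. You correctly note that Bakry--\'Emery fails on $\mathcal{U}_N$ because the Ricci curvature vanishes in the central $\U(1)$ direction, but your second proposed fix --- that this one-dimensional flat direction ``contributes only an $O(1)$ correction to the concentration estimate, which is harmless'' --- is not harmless and is not correct as stated. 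If the central circle genuinely entered with an $N$-independent log-Sobolev constant, tensorization would only give an $O(1)$ constant for $\mathcal{U}_N$, i.e.\ a bound of the form $\P[g\geq\delta]\leq \exp(-c\,\delta^2/L^2)$ with no factor of $N$; in the paper's application (Corollary \ref{cor: Gromov}, with $\delta = N^{\nu}/(\sqrt{N}\eta^2)$ and $L\sim \eta^{-2}$) this would give an exponent of order $N^{2\nu-1}$, which is vacuous, so the factor $N$ in the exponent is exactly what cannot be given up.

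The correct repair is your first alternative, made quantitative: a Haar unitary can be written as $U = \e^{\ii\theta}V$ with $\theta$ uniform on an arc of length $2\pi/N$ and $V$ Haar on $\SU(N)$, independent of $\theta$. Along this short arc the Frobenius displacement is only $|\theta-\theta'|\sqrt{N}\leq 2\pi/\sqrt{N}$, so the central factor satisfies a log-Sobolev inequality with constant $O(1/N)$ after composing with the multiplication map, while $\SU(N)$ has constant $O(1/N)$ by Bakry--\'Emery; tensorization then yields a log-Sobolev constant of order $1/N$ for Haar measure on all of $\mathcal{U}_N$ (this is carried out, e.g., in Meckes' monograph on the classical compact groups, and is the content of the concentration results in \cite{AGZ} that the paper has in mind). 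With that substitution your Herbst argument goes through verbatim and gives the stated $N$-independent constants $c, c_1$.
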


\begin{cor}\label{cor: Gromov}For any $1\leq i\leq N$, we have the following concentration inequality for $\hat{\cG}_{ii}(z,t)$:
	\ben
	\P\left[\absa{\hat{\cG}_{ii}(z,t)-\E\hat{\cG}_{ii}(z,t)}\geq \frac{N^\nu}{\sqrt{N}\eta^2}\right]\leq \exp(-cN^{\nu})\,.
	\ee
The same estimate holds for $(\hat{\cG}(z,t)\bar{V}^*(t)X\bar{V}(t))_{ii}$, $\mathsf{G}_{ii}(z,t)$ and $(\mathsf{G}(z,t)\bar{V}(t)Y(t)\bar{V}^*(t))_{ii}$.
\end{cor}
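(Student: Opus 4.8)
The plan is to realize each of the four quantities as a Lipschitz function of the Haar-distributed unitary matrix $\bar V(t)=VU(t)^*$ and then to invoke Proposition \ref{prop: Gromov}. First I would fix $z$ with $\eta:=\Im z>0$, fix the time $t$, and fix a realization of the Gaussian matrix $Q$ (recall that $\norm X\leq K+1$ is assumed throughout this subsection, so $X$ may be treated as a deterministic Hermitian matrix of norm $\leq K+1$), and then condition on $U(t)$. Since $V$ is Haar on $\mathcal{U}_N$ and independent of $U(t)$ and of $Q$, right-invariance of the Haar measure shows that, conditionally, $\bar V(t)$ is again Haar-distributed, while $X$ and $\bar Y(t)=Y+(T-t)\hat Y$ are now deterministic. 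Thus $\hat\cG_{ii}(z,t)$, $(\hat\cG(z,t)\bar V^*(t)X\bar V(t))_{ii}$, $\mathsf G_{ii}(z,t)$ and $(\mathsf G(z,t)\bar V(t)Y\bar V^*(t))_{ii}$ become deterministic functions of $\bar V(t)\in\mathcal{U}_N$, and it remains only to estimate their Lipschitz constants in the Frobenius metric.

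The Lipschitz estimates rest on three elementary observations: $|M_{ii}|\leq\norm M$ for any matrix $M$; the resolvent identity $R'-R''=R'(\cH''-\cH')R''$ together with $\norm{R'},\norm{R''}\leq\eta^{-1}$; and the perturbation inequality $\norm{V_1^*WV_1-V_2^*WV_2}\leq 2\norm W\,\norm{V_1-V_2}_F$ valid for any Hermitian $W$ and unitaries $V_1,V_2$ (write the increment as $(V_1-V_2)^*WV_1+V_2^*W(V_1-V_2)$ and use $\norm{\cdot}\leq\norm{\cdot}_F$ with $\norm{V_1}=\norm{V_2}=1$). For $\hat\cG_{ii}$, since $\hat\cH(t)=\bar V^*X\bar V+\bar Y(t)$ with $\bar Y(t)$ deterministic, these give at once a Lipschitz constant $L\leq 2(K+1)\eta^{-2}$. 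For the product-type quantity $\hat\cG\bar V^*X\bar V$ I would split the increment as $(\hat\cG'-\hat\cG'')\,\bar V_1^*X\bar V_1+\hat\cG''\,(\bar V_1^*X\bar V_1-\bar V_2^*X\bar V_2)$ and bound each piece using $\norm{\hat\cG\bar V^*X\bar V}\leq(K+1)\eta^{-1}$ and the estimates above, again reaching $L\lesssim\eta^{-2}$ (after absorbing the $K$-dependence and using $\eta\leq1$). The quantities $\mathsf G_{ii}$ and $\mathsf G\bar V Y\bar V^*$ are treated identically, with the roles of the conjugated and non-conjugated matrices exchanged, using $\norm{\bar Y(t)}\leq K+T\norm{\hat Y}\leq K+1$ (by \eqref{hatY bound}) together with $\norm Y\leq K$; again $L\lesssim\eta^{-2}$.

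With the Lipschitz bounds in hand, I would apply Proposition \ref{prop: Gromov} to the real and imaginary parts of each centered quantity (each having mean zero under the Haar measure and Lipschitz constant $O(\eta^{-2})$), with the choice $\delta=N^\nu/(\sqrt N\eta^2)$. The exponent produced is $-c_1N\delta^2/L^2$, which is of order $-N^{2\nu}$ and hence $\leq-cN^\nu$ since $\nu>0$; the union bound over real and imaginary parts costs only a constant, absorbed for $N$ large. Because the resulting bounds do not depend on $U(t)$ or on $Q$, they hold unconditionally, with $\E$ denoting the (conditional, equivalently Haar) mean. The only real obstacle is the bookkeeping in the Lipschitz estimates for the product-type quantities $\hat\cG\bar V^*X\bar V$ and $\mathsf G\bar V Y\bar V^*$: one must carry the factor $2$ coming from differentiating the conjugation, and—crucially—ensure that it is $\norm{V_1-V_2}_F$ rather than the operator norm that appears on the right, so that the Lipschitz constant matches the metric in Proposition \ref{prop: Gromov} and the concentration scale comes out exactly as $N^\nu/(\sqrt N\eta^2)$.
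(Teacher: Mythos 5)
Your proposal is correct and follows essentially the same route as the paper: realize each quantity as a Lipschitz function of the Haar unitary $\bar V(t)$ (with $X$, $\bar Y(t)$ frozen), bound the Lipschitz constant by $O(\eta^{-2})$ via the conjugation estimate and the resolvent identity, and apply Proposition \ref{prop: Gromov} with $\delta=N^{\nu}/(\sqrt{N}\eta^{2})$. The only differences are presentational — you spell out the conditioning on $U(t)$ and $Q$, the increment splitting for the product-type quantities, and the real/imaginary-part reduction, which the paper leaves as ``similar arguments.''
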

\begin{proof}
	Note that the map $\Om\mapsto (\Om^* X \Om-z)^{-1}$ is the composite of two maps: $\Om\mapsto \Om^* X \Om$ and $Q\mapsto(Q-z)^{-1}$.  The former one is Lipschitz with constant $2(K+1)$ by the simple observation
	\ben	
		\norm{\Om_1^*X\Om_1-\Om_2^* X\Om_2}_F\leq 2\norm{X}\norm{\Om_1-\Om_2}_F\,,\forall \Om_1,\Om_2\in\mathcal{U}_N\,.
	\ee
	The latter map's Lipschitz constant can be seen by
	\ben
	\begin{split}
		\norm{(Q_1-z)^{-1}-(Q_2-z)^{-1}}_F&\leq \norm{(Q_1-z)^{-1}}\norm{Q_1-Q_2}_F\norm{(Q_2-z)^{-1}}\\
		&\leq \eta^{-2}\norm{Q_1-Q_2}_F\,.
	\end{split}
	\ee
	for any Hermitian matrices $Q_1$ and $Q_2$.   Therefore, the Lipschitz constant for the map $\Om\mapsto (\Om^* X \Om-z)^{-1}$ is 
	\ben
		L\leq 2(K+1)\eta^{-2}\,.
	\ee
	Taking the $(i,i)$-th component, we see that the map 
	\ben
		\Om\mapsto (\Om^* X \Om-z)^{-1}_{ii}
	\ee
	also has Lipschitz constant $2(K+1)\eta^{-2}$. By Proposition \ref{prop: Gromov} we have
	\ben
		\P\left[\absa{\hat{\cG}_{ii}(z,t)-\E\hat{\cG}_{ii}(z,t)}\geq \frac{N^\nu}{\sqrt{N}\eta^2}\right]\leq \exp(-cN^{\nu})\,.
	\ee
	By similar arguments it is easy to get the same bounds for $(\hat{\cG}(z,t)\bar{V}^*(t)X\bar{V}(t))_{ii}$, $\mathsf{G}_{ii}(z,t)$  
	and $(\mathsf{G}(z,t)\bar{V}(t) \allowbreak Y(t)\bar{V}^*(t))_{ii}$.
\end{proof}

The estimates in Corollary \ref{cor: Gromov} are useful when $\eta\geq N^{-1/4+\nu/2}$. Hence, on a domain where $\eta\geq N^{-1/4+\nu/2}$, the quantities $\hat{\mathcal{G}}_{ii}$ are concentrated around deterministic functions of $z$ and $t$. To figure out what the deterministic functions are, we look at the following system of equations.  Here we denote $X=:\diag(X_1,\cdots,X_N)$, $\bar Y(t)=:\diag(\bar Y_1(t),\cdots,\bar Y_N(t))$.  We omit the dependence of $Y_k(t)$ on $t$ for brevity.
\be\label{eq: sce1}
\left\{
	\begin{split}
		(\hat{\cG}\bar{V}^*X\bar{V})_{ii}+ \hat{\cG}_{ii}\bar{Y}_i-z\hat{\cG}_{ii}=1\\
		\mathsf{G}_{ii}X_{i}+ (\mathsf{G}\bar{V}\bar{Y}\bar{V}^*)_{ii}-z\mathsf{G}_{ii}=1\\
	\tr (\hat{\cG}\bar{V}^*X\bar{V}) +\tr(\hat{\cG}\bar{Y}) -z\tr \hat{\cG}=N
	\end{split}
\right.
\ee
Note that $\mathsf{G}$ is equal to $\hat{\cG}$ up to conjugation by the unitary matrix $\bar{V}(t)$, and hence $\tr (\hat{\cG}\bar{V}^*X\bar{V})=\tr(\mathsf{G}X)$. Rearranging and averaging over $i$, we have
\be\label{eq: sce2}
\left\{
\begin{split}
	\frac{1}{N}\tr \hat{\cG}& = \frac{1}{N}\sum_i\left(-z+\bar{Y}_i +\frac{(\hat{\cG}\bar{V}^*X\bar{V})_{ii}}{\hat{\cG}_{ii}}\right)^{-1}\\
	 \frac{1}{N}\tr {\mathsf{G}}&=  \frac{1}{N}\sum_i\left(-z+X_i+\frac{(\mathsf{G}\bar{V}\bar{Y}\bar{V}^*)_{ii}}{\mathsf{G}_{ii}}\right)^{-1}\\
	\frac{N}{\tr \hat{\cG}}&=\frac{\tr (\mathsf{G}X)}{\tr \hat{\cG}} +\frac{\tr(\hat{\cG}\bar{Y})}{\tr \hat{\cG}} -z
\end{split}
\right.
\ee
Denote $\mathsf{m}:=  \frac{1}{N}\tr \hat{\cG}$, $\mathsf{w}_1:= \frac{\tr (\mathsf{G}X)}{\tr \hat{\cG}}$ and $\mathsf{w}_2:= \frac{\tr(\hat{\cG}\bar{Y})}{\tr \hat{\cG}}$. We want to replace $\tfrac{(\hat{\cG}\bar{V}^*X\bar{V})_{ii}}{\hat{\cG}_{ii}}$ and $\tfrac{(\mathsf{G}\bar{V}\bar{Y}\bar{V}^*)_{ii}}{\mathsf{G}_{ii}}$ by $\mathsf{w}_1$ and $\mathsf{w}_2$, respectively. We need to control
\be\label{eq: w1diff}
	\frac{(\hat{\cG}\bar{V}^*X\bar{V})_{ii}}{\hat{\cG}_{ii}}-\mathsf{w}_1= \frac{\sum_j ((\hat{\cG}\bar{V}^*X\bar{V})_{ii}\hat{\cG}_{jj}- \hat{\cG}_{ii}(\bar{V}^*X\bar{V}\hat{\cG})_{jj}) }{\hat{\cG}_{ii}\tr \hat{\cG}}\,.
\ee
The numerator on the right hand side has $0$ expectation, because by Proposition 3.2 in \cite{PV00}, for any $i$ and $j$,
\ben
	\E[(\hat{\cG}\bar{V}^*X\bar{V})_{ii}\hat{\cG}_{jj}- \hat{\cG}_{ii}(\bar{V}^*X\bar{V}\hat{\cG})_{jj}]=0\,.
\ee
By the simple observation that for any random variables $\xi_1$ and $\xi_2$,
\ben
\begin{split}
	\absa{\xi_1\xi_2-\E[\xi_1\xi_2] }&= \absa{\xi_1(\xi_2-\E\xi_2)+(\xi_1-\E\xi_1)\E\xi_2 -\text{cov}(\xi_1,\xi_2) }\\
	&\leq \norm{\xi_1}_\infty\vee\norm{\xi_2}_\infty (\abs{\xi_1-\E\xi_1}+\abs{\xi_2-\E\xi_2})+\sqrt{\var\xi_1\var\xi_2}\,,
\end{split}
\ee
and in view of Proposition \ref{cor: Gromov}, we have, with probability $1-\exp(-cN^\nu)$,
\be\label{eq: pastur1}
	(\hat{\cG}\bar{V}^*X\bar{V})_{ii}\hat{\cG}_{jj}- \hat{\cG}_{ii}(\bar{V}^*X\bar{V}\hat{\cG})_{jj}= \mathcal{O}\left(\frac{N^\nu}{\sqrt{N}\eta^3}\right)\, ,
\ee
for $\eta \geq N^{-1/4}$.  
We want to divide both sides by $\hat{\cG}_{ii}\tr \hat{\cG}$, and therefore require a lower bound on $\abs{\hat{\cG}_{ii}\tr\hat{\cG}}$.  It is sufficient to get a lower bound on $\im \hat{\cG}_{ii}$.

\begin{prop}\label{prop: imcG}
	For $\abs{z}\leq 4K$, there is a universal constant $c>0$ such that 
	\ben
		\im\hat{\cG}_{ii}\geq c\eta\,.
	\ee	
\end{prop}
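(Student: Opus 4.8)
The plan is to exploit the explicit diagonal form of $\hat{\cH}(t)=\bar V(t)^* X \bar V(t) + \bar Y(t)$ together with the general matrix identity $\Im \cG_{ii} = \eta \, (\cG^* \cG)_{ii} = \eta \sum_k |\cG_{ik}|^2$, which holds for the resolvent of any Hermitian matrix at $z = E + \ii \eta$. Thus $\Im \hat{\cG}_{ii}(z,t) = \eta \sum_k |\hat{\cG}_{ik}(z,t)|^2 \geq \eta \, |\hat{\cG}_{ii}(z,t)|^2$, so it suffices to produce a lower bound on $|\hat{\cG}_{ii}(z,t)|$ that is bounded below by a universal constant, uniformly in $i$, $t$, and $z$ with $|z| \le 4K$.

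To bound $|\hat{\cG}_{ii}|$ from below I would use the first equation of the self-consistent system \eqref{eq: sce1}, which rearranges to
\be
\hat{\cG}_{ii} = \frac{1}{ -z + \bar Y_i + (\hat{\cG}\bar V^* X \bar V)_{ii}/\hat{\cG}_{ii} } .
\ee
The numerator of the fraction appearing here, $(\hat{\cG}\bar V^* X \bar V)_{ii}/\hat{\cG}_{ii}$, has imaginary part with a definite sign: writing $\hat{\cH}(t) - z = \bar V^* X \bar V + \bar Y_i - z$ on the relevant coordinate and using that $\bar V^* X \bar V$ is Hermitian with $\|X\| \le K+1$ and $\bar Y(t)$ is real diagonal, one checks (this is the standard Schur-complement computation for the $(i,i)$ resolvent entry) that the self-energy term $(\hat{\cG}\bar V^* X \bar V)_{ii}/\hat{\cG}_{ii}$ can be written as $(\bar V^* X \bar V)_{ii} - (\bar V^* X \bar V)_{i\cdot}\,(\hat{\cH}^{(i)} - z)^{-1}\,(\bar V^* X \bar V)_{\cdot i}$, where $\hat{\cH}^{(i)}$ is the $i$-th minor; hence its imaginary part equals $-\eta \, \|(\hat{\cH}^{(i)} - z)^{-1} (\bar V^* X \bar V)_{\cdot i}\|^2 \le 0$. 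Consequently the denominator $-z + \bar Y_i + (\hat{\cG}\bar V^* X \bar V)_{ii}/\hat{\cG}_{ii}$ has imaginary part $\le -\eta$ in absolute value contribution from $-z$, but more usefully its modulus is controlled: $\Re$ part is real and the full denominator $D_i := \hat{\cG}_{ii}^{-1}$ satisfies $\Im D_i = -\eta - \eta\|\cdots\|^2 \le -\eta$, while the real part is bounded by $|E| + K + (K+1) + |\Re(\text{self-energy})| \lesssim K$ using the a priori bound $\|X\| \le K+1$, $\|\bar Y(t)\| \lesssim K$ (from $\|Y\|\le K$, $\|\hat Y\|\lesssim \log N$, $T \hat Y$ of size $N^{-1+\mathfrak b}\log N$), and the trivial bound on the real part of the self-energy in terms of $\|X\|^2/\eta$... which is the point where care is needed.

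The main obstacle is precisely controlling $|D_i| = |\hat{\cG}_{ii}|^{-1}$ from above by a universal constant (rather than something growing in $1/\eta$), since the naive bound on $\Re D_i$ is only $O(1/\eta)$. The resolution: one should instead argue directly with the identity $\Im D_i = -\eta\,(1 + \|(\hat{\cH}^{(i)}-z)^{-1}(\bar V^* X\bar V)_{\cdot i}\|^2)$ and the companion identity $\Re D_i + \text{(real stuff bounded by }cK) = -\text{(real part of quadratic form)}$; combining $|\hat{\cG}_{ii}| = 1/|D_i|$ with $\Im\hat{\cG}_{ii} = \Im(1/D_i) = -\Im D_i / |D_i|^2 = \eta(1 + \|\cdots\|^2)/|D_i|^2$, and noting $\eta(1+\|\cdots\|^2) \ge \eta |D_i|^2/(\text{something})$ would close the loop if $|D_i|$ is comparable to $1 + \|\cdots\|$, which is exactly the Ward-identity bookkeeping. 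The cleanest route is: $\Im \hat{\cG}_{ii} = \eta(\hat{\cG}^*\hat{\cG})_{ii}$ and simultaneously $1 = |\hat{\cG}_{ii} D_i| \le |\hat{\cG}_{ii}|\,(|E| + cK + |\Re \Sigma_i| + (\eta + \eta\|\cdots\|^2))$ where $\Sigma_i$ is the self-energy; bounding $|\Re\Sigma_i| \le \|X\|\,\|(\bar V^*X\bar V)_{\cdot i}\| \cdot \|(\hat{\cH}^{(i)}-z)^{-1}(\bar V^*X\bar V)_{\cdot i}\| \le (K+1)^{3/2}\,\|(\hat{\cH}^{(i)}-z)^{-1}(\bar V^*X\bar V)_{\cdot i}\|$ by Cauchy–Schwarz, and then using $\eta \|(\hat{\cH}^{(i)}-z)^{-1}(\bar V^*X\bar V)_{\cdot i}\|^2 = \Im\Sigma_i$-type control to absorb it. I would present this as: set $\ell := \|(\hat{\cH}^{(i)}-z)^{-1}(\bar V^*X\bar V)_{\cdot i}\|$, so $|D_i| \le C_K(1 + \ell)$ while $\Im\hat{\cG}_{ii} = \eta(1+\ell^2)/|D_i|^2 \ge \eta(1+\ell^2)/(C_K^2(1+\ell)^2) \ge \eta/(2C_K^2)$, which is the desired bound with $c = 1/(2C_K^2)$. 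The genuinely delicate step is justifying the Schur-complement expression for $(\hat{\cG}\bar V^*X\bar V)_{ii}/\hat{\cG}_{ii}$ and the uniform-in-$z$ bound $\|\bar Y(t)\| \le cK$; both are routine given the earlier estimates, so overall this is a short Ward-identity/Schur-complement argument once the algebra is set up correctly.
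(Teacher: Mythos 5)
Your final argument is correct, but it takes a genuinely different and heavier route than the paper. The paper's proof is two lines: writing the spectral decomposition $\hat{\cH} = \sum_k \gamma_k \, \omega_k \omega_k^*$, one has $\im\hat{\cG}_{ii} = \sum_k |\omega_{ik}|^2 \eta / |\gamma_k - z|^2 \geq \eta\,(\norm{\hat{\cH}} + |z|)^{-2}$, using only $\sum_k |\omega_{ik}|^2 = 1$ and the deterministic norm bounds $\norm{X} \leq K+1$, $\norm{\bar{Y}(t)} \lesssim K$, $|z| \leq 4K$. You instead go through the Schur complement $\hat{\cG}_{ii}^{-1} = D_i = \hat{\cH}_{ii} - z - v^*(\hat{\cH}^{(i)}-z)^{-1}v$ with $v$ the off-diagonal column, and your key bookkeeping is sound: $\im D_i = -\eta(1+\ell^2)$ with $\ell = \lVert(\hat{\cH}^{(i)}-z)^{-1}v\rVert$, while Cauchy--Schwarz gives $|D_i| \leq C_K(1+\ell)$, so $\im\hat{\cG}_{ii} = \eta(1+\ell^2)/|D_i|^2 \geq \eta/(2C_K^2)$ -- this correctly tames the $O(1/\eta)$-sized real part of the self-energy against its imaginary part, which you rightly identified as the only delicate point. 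What each approach buys: the paper's argument is shorter, needs no minors, and makes it transparent that the constant depends only on the spectral radius of $\hat{\cH}$ and $|z|$; your self-energy formulation is the standard local-law machinery and would generalize to situations where one wants entrywise information beyond a crude lower bound, but here it is more than is needed, and the opening Ward-identity reduction ($\im\hat{\cG}_{ii} \geq \eta|\hat{\cG}_{ii}|^2$) in your write-up is ultimately not used and could be dropped, as could the exploratory middle portion; only your final paragraph constitutes the proof.
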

\begin{proof}
	Assume that $\hat{\cH}$ has spectral decomposition $\hat{\cH}_{ij}=\om_{ik}^* \ga_k\om_{jk} $. Then,
	\be
		\im\hat{\cG}_{ii}=  \sum_k \frac{\abs{\om_{ik}}^2\eta}{\abs{\ga_k-z}^2}\geq \absa{\norm{\hat{\cH}}+\abs{z}}^{-2}\eta \sum_k \abs{\om_{ik}}^2\geq c\eta\,.
	\ee
\end{proof}
Using this proposition, we divide both sides of \eqref{eq: pastur1} by $\hat{\cG}_{ii}\tr \hat{\cG}$. Recalling \eqref{eq: w1diff}, we have, with probability $1-\exp(-cN^\nu)$,
\be\label{eq: w1diff1}
	\absa{\frac{(\hat{\cG}\bar{V}^*X\bar{V})_{ii}}{\hat{\cG}_{ii}}-\mathsf{w}_1} = \mathcal{O}\left(\frac{N^\nu}{\sqrt{N}\eta^5}\right)\,.
\ee
A similar bound holds for $\mathsf{w}_2$. This enables us to rewrite \eqref{eq: sce2} as
\be\label{eq: sce3}
\left\{
\begin{split}
	 \mathsf{m}& = \frac{1}{N}\sum_i\left(-z+\bar{Y}_i +\mathsf{w}_1\right)^{-1}+\mathcal{R}\\
	 \mathsf{m}&=\frac{1}{N}\sum_i\left(-z+X_i+\mathsf{w}_2\right)^{-1}+\mathsf{R}\\
	\dfrac{1}{\mathsf{m}}&=\mathsf{w}_1+\mathsf{w}_2 -z
\end{split}
\right.
\ee
where $\abs{\mathcal{R}}+\abs{\mathsf{R}} = \mathcal{O}\left(\frac{N^\nu}{\sqrt{N}\eta^7}\right)$ with probability $1-2N\exp(-cN^\nu)$.  Now let $\mathfrak{m}_1$ and $\mathfrak{m}_2$ be the Stieltjes transform of $\frac{1}{N}\sum_i \delta_{X_i}$ and $\frac{1}{N}\sum_i \delta_{\bar{Y}_i}$ respectively. Consider a system of equations
\be\label{eq: sce4}
\left\{
\begin{split}
	\mathfrak{m}& = \mathfrak{m}_2(z-\mathfrak{w}_1)\\
	\mathfrak{m}&=\mathfrak{m}_1(z-\mathfrak{w}_2)\\
	\dfrac{1}{\mathfrak{m}}&=\mathfrak{w}_1+\mathfrak{w}_2 -z
\end{split}
\right.
\ee
By Theorem 4.1 in \cite{BB07}, for any $z\in\C^+$, the system above has a unique solution $(\mathfrak{w}_1,\mathfrak{w_2},\mathfrak{m})\in\C^-\times\C^-\times\C^+$ that depends holomorphically on $z$.

\begin{thm}\label{thm: edge law}
For any $\nu>0$, $t\in[0,T]$, $\eta=\im z\in [N^{-1/40},1]$, $\abs{z}\leq 4K$, the following holds with probability $1-\exp(-cN^\nu)$, when $N$ is large enough.
\ben
	\absa{\hat{\cG}_{ii} - \frac{1}{-z+\bar{Y}_i +\mathfrak{w}_1}} = \mathcal{O}\left(\frac{N^\nu}{\sqrt{N}\eta^{15}}\right)\,.
\ee
\end{thm}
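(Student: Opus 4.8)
The plan is to combine the exact resolvent identities \eqref{eq: sce1}, the concentration and Pastur--Vasilchuk inputs that already produced \eqref{eq: sce3}, and a quantitative stability analysis of the subordination system \eqref{eq: sce4}. Rearranging the first line of \eqref{eq: sce1} gives the exact identity $\hat{\cG}_{ii}^{-1}=-z+\bar Y_i+(\hat{\cG}\bar V^*X\bar V)_{ii}/\hat{\cG}_{ii}$, and \eqref{eq: w1diff1} replaces the ratio on the right by $\mathsf{w}_1$ at the cost of $\mathcal O(N^\nu/(\sqrt N\eta^5))$ on an event of probability $1-\exp(-cN^\nu)$. Hence it suffices to show that the random quantity $\mathsf{w}_1$ lies within $\mathcal O(N^\nu/(\sqrt N\eta^{q}))$ of the deterministic subordination function $\mathfrak{w}_1$ for some fixed $q$; then, since $|\hat{\cG}_{ii}|\le\eta^{-1}$ (operator norm of the resolvent) and $\im(-z+\bar Y_i+\mathfrak{w}_1)=-\eta+\im\mathfrak{w}_1\le-\eta$ because $\mathfrak{w}_1\in\C^-$ (so both $|\hat{\cG}_{ii}^{-1}|$ and $|-z+\bar Y_i+\mathfrak{w}_1|$ are $\ge\eta$), the elementary bound $|a^{-1}-b^{-1}|=|a-b|/(|a|\,|b|)$ converts this into the claimed estimate on $\hat{\cG}_{ii}$, the powers of $\eta^{-1}$ accumulating to at most $15$ after relabelling $\nu$.

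For the comparison $\mathsf{w}_1\approx\mathfrak{w}_1$ I would argue as follows. By \eqref{eq: sce3}, on an event of probability $1-2N\exp(-cN^\nu)$ the triple $(\mathsf{w}_1,\mathsf{w}_2,\mathsf{m})$ solves \eqref{eq: sce4} up to additive errors of size $\mathcal O(N^\nu/(\sqrt N\eta^7))$, while $(\mathfrak{w}_1,\mathfrak{w}_2,\mathfrak{m})$ is the exact solution supplied by \cite{BB07}. First I would record the a priori inputs: $\im\mathsf{m}=N^{-1}\sum_i\im\hat{\cG}_{ii}\ge c\eta$ from Proposition \ref{prop: imcG}, together with $|\mathsf{m}|\le\eta^{-1}$ and the crude bounds $|\mathsf{w}_1|,|\mathsf{w}_2|\lesssim\eta^{-2}$ from their definitions via $|\tr\hat{\cG}|\ge|\im\tr\hat{\cG}|\ge cN\eta$, and the analogous facts $\im\mathfrak{m}\gtrsim\eta$, $\mathfrak{w}_1,\mathfrak{w}_2\in\C^-$, $|\mathfrak{w}_j|\lesssim\eta^{-1}$ for the deterministic solution. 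Linearising \eqref{eq: sce4} around $(\mathfrak{w}_1,\mathfrak{w}_2,\mathfrak{m})$, the difference $\delta=(\mathsf{w}_1-\mathfrak{w}_1,\mathsf{w}_2-\mathfrak{w}_2,\mathsf{m}-\mathfrak{m})$ satisfies $(I-D)\delta=(\mathcal R,\mathsf R,0)+\mathcal O(|\delta|^2)$, with $D$ built from $\mathfrak{m}_1'(z-\mathfrak{w}_2)=-N^{-1}\sum_i(X_i-z+\mathfrak{w}_2)^{-2}$ and $\mathfrak{m}_2'(z-\mathfrak{w}_1)$. The key step is to show $I-D$ is invertible with $\|(I-D)^{-1}\|\lesssim\eta^{-p_1}$ for some fixed $p_1$; this is the quantitative stability of the subordination equations, and it is where the assumption \eqref{edge behavior} enters in an essential way: as in \cite{BES16,BES16b}, \eqref{edge behavior} forces the density $\rho$ of $\mu=\mu_1\boxplus\mu_2$ to be bounded and Hölder continuous up to the spectral edges, ruling out the near-resonances that would otherwise make $\|(I-D)^{-1}\|$ grow faster than polynomially in $\eta^{-1}$, uniformly for $\im z\ge N^{-1/40}$. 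The quadratic error is then absorbed because the right-hand side is smaller than a fixed power of $\eta$ (the range $\eta\ge N^{-1/40}$ leaves room), giving $|\delta|\lesssim N^\nu/(\sqrt N\eta^{7+p_1})$, and in particular the required bound on $\mathsf{w}_1-\mathfrak{w}_1$.

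One subtlety is that \eqref{eq: sce4} can have several solutions, so before linearising one must know that $(\mathsf{w}_1,\mathsf{w}_2,\mathsf{m})$ lies in the basin of $(\mathfrak{w}_1,\mathfrak{w}_2,\mathfrak{m})$; I would handle this by the standard continuity argument in $\eta$. At $\eta=1$ the system is $\mathcal O(1)$-stable and $\mathsf{m}=N^{-1}\tr\hat{\cG}$, being a bounded Lipschitz function of the Haar matrix $\bar V$, concentrates around the deterministic prediction by Corollary \ref{cor: Gromov}; one then decreases $\eta$ along a grid of spacing $N^{-10}$, propagating the estimate between adjacent grid points using $|\partial_z\hat{\cG}_{ii}|\le\eta^{-2}$ and $|\partial_z\mathfrak{w}_1|\lesssim\eta^{-2}$, and re-closing it at each scale via \eqref{eq: sce3}. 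The whole argument runs on the intersection of the polynomially many events where \eqref{eq: w1diff1}, \eqref{eq: sce3} and the concentration estimates hold at the grid points, whose complement has probability $\le\exp(-c'N^\nu)$; a routine grid-to-all-$z$ argument, again using Lipschitz continuity in $z$, then yields the estimate for all $z$ with $\im z\in[N^{-1/40},1]$ and $|z|\le4K$.

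I expect the main obstacle to be the stability bound $\|(I-D)^{-1}\|\lesssim\eta^{-p_1}$ near the edges of $\mu$, i.e.\ establishing that the subordination system degrades only polynomially in $\eta^{-1}$ as $z$ approaches the spectral edge; this is precisely the point at which the square-root-type non-degeneracy assumption \eqref{edge behavior} on $\mu_2$ is indispensable, since without some control on how fast $\mu_2$ can vanish the free convolution $\mu$ could develop singular edge behaviour and the linearised operator could become nearly non-invertible at scales $\eta\sim N^{-c}$.
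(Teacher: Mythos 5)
Your outer reduction is sound and matches the paper: using the first line of \eqref{eq: sce1} together with \eqref{eq: w1diff1} to reduce everything to the comparison $\mathsf{w}_1\approx\fw_1$, and then converting back to $\hat{\cG}_{ii}$ via $|a^{-1}-b^{-1}|=|a-b|/(|a||b|)$ with both denominators bounded below by $\eta$, is exactly the right frame. The problem is that your pivotal step — the stability bound $\|(I-D)^{-1}\|\lesssim\eta^{-p_1}$ — is only asserted, and the mechanism you invoke for it is both unproved and off-target. You claim the assumption \eqref{edge behavior} is \emph{indispensable} here because it forces the density of $\mu_1\boxplus\mu_2$ to be bounded and H{\"o}lder continuous up to the spectral edges ``as in \cite{BES16}''; nothing of that sort is established in this setting, it would be a substantial detour to prove, and it is not what is needed: the theorem must hold for \emph{all} $|z|\le 4K$ (including energies outside the support) but only down to $\eta\ge N^{-1/40}$, so polynomial losses in $\eta^{-1}$ are harmless and no edge regularity is required. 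The actual mechanism is soft. Writing both the exact and the perturbed systems in the form \eqref{eq: sce6}--\eqref{eq: sce7} via the auxiliary measures $\hat{\mu}_1,\hat{\mu}_2$ of \cite{Ma92}, the two coupling coefficients $\mathsf{a},\mathsf{b}$ (integrals of $\hat{\mu}_2$, resp.\ $\hat{\mu}_1$, against $\big((x-z+\fw_1)(x-z+\mathsf{w}_1)\big)^{-1}$ etc.) satisfy, by Cauchy--Schwarz combined with the exact imaginary-part identities such as $\int_\R\hat{\mu}_2(\dd x)\,|x-z+\fw_1|^{-2}=\frac{-\im\fw_2}{\eta-\im\fw_1}$, the bound $|\mathsf{a}\mathsf{b}|\le 1-c\eta^{2}$, using nothing beyond the crude bounds $|\fw_j|,|\mathsf{w}_j|\lesssim\eta^{-1}$; hence the $2\times2$ system for the differences is invertible with norm $\mathcal{O}(\eta^{-4})$, uniformly on the whole domain, and \eqref{edge behavior} plays no role in this theorem (it enters later, in the proof of Theorem \ref{thm: edge deloc}). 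Since you yourself flag this stability bound as the main obstacle and supply no proof of it, the proposal is incomplete at its central step.

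A secondary structural point: because one can subtract the two systems \emph{exactly} against the fixed measures $\hat{\mu}_1,\hat{\mu}_2$, the difference equations \eqref{eq: sce diff} are exactly affine in $\fw_j-\mathsf{w}_j$ — the coefficients are divided differences rather than derivatives — so there is no quadratic remainder to absorb and no basin-of-attraction issue at all. Your Newton-type linearization with $D$ built from $\mathfrak{m}_1',\mathfrak{m}_2'$, plus the continuity-in-$\eta$ grid bootstrap from $\eta=1$, could likely be pushed through (the needed bound $|\hat{\mathfrak{m}}_1'\hat{\mathfrak{m}}_2'|\le 1-c\eta^2$ follows from the same imaginary-part argument), but it obliges you to prove a priori closeness, second-derivative bounds, and absorption estimates that the exact-subtraction route makes unnecessary.
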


\begin{proof}
Rewrite \eqref{eq: sce4} as
\be\label{eq: sce5}
\left\{
\begin{split}
	-\mathfrak{w}_2& = -(z-\mathfrak{w}_1)-\frac{1}{\mathfrak{m}_2(z-\mathfrak{w}_1)}\\
	-\mathfrak{w}_1& = -(z-\mathfrak{w}_2)-\frac{1}{\mathfrak{m}_1(z-\mathfrak{w}_2)}
\end{split}
\right.
\ee
By Proposition 2.2 in \cite{Ma92}, the map $\zeta\mapsto -\zeta-\frac{1}{\mathfrak{m}_2(\zeta)}$ is the Stieltjes transform of a Borel measure on $\R$ with total mass $\frac{1}{N}\sum_i \bar{Y}_i^2$.  Similarly, the map $\zeta\mapsto -\zeta-\frac{1}{\mathfrak{m}_1(\zeta)}$ is the Stieltjes transform of a Borel measure on $\R$ with total mass $\frac{1}{N}\sum_i X_i^2$.  Denote the Borel measures by $\hat{\mu}_2$ and $\hat{\mu}_1$, respectively.  Denote $\hat{\mathfrak{m}}_2$ and $\hat{\mathfrak{m}}_1$ to be their Stieltjes transform, respectively.  Then, \eqref{eq: sce5} reads
\be\label{eq: sce6}
\left\{
\begin{split}
	-\mathfrak{w}_2& = \hat{\mathfrak{m}}_2(z-\mathfrak{w}_1)\\
	-\mathfrak{w}_1& = \hat {\mathfrak{m}}_1(z-\mathfrak{w}_2)
\end{split}
\right.
\ee
Similarly,  the system \eqref{eq: sce3} can be written as
\be\label{eq: sce7}
\left\{
\begin{split}
	-\mathsf{w}_2& = \hat{\mathfrak{m}}_2(z-\mathsf{w}_1)+r_1\\
	-\mathsf{w}_1& = \hat {\mathfrak{m}}_1(z-\mathsf{w}_2)+r_2
\end{split}
\right.
\ee
where $\abs{r_1}\vee\abs{r_2}= \mathcal{O}\left(\frac{N^\nu}{\sqrt{N} \eta^9}\right)$  with probability $1-2N\exp(-cN^\nu)$. Taking the imaginary part of the first equation in \eqref{eq: sce6} we get,
\ben
	-\im\mathfrak{w}_2 = \int_{\R}\frac{(\eta-\im\mathfrak{w}_1)\hat{\mu}_2(\dd x)}{\absa{x-z+\mathfrak{w}_1}^2},
\ee
which implies
\ben	
	 \int_{\R}\frac{\hat{\mu}_2(\dd x)}{\absa{x-z+\mathfrak{w}_1}^2}= \frac{-\im\mathfrak{w}_2}{\eta-\im\mathfrak{w}_1}\,.
\ee
Similarly, $\int_{\R}\frac{\hat{\mu}_1(\dd x)}{\absa{x-z+\mathfrak{w}_2}^2}=\frac{-\im\mathfrak{w}_1}{\eta-\im\mathfrak{w}_2}$. Similary we also derive from \eqref{eq: sce7} that
\ben
	 \int_{\R}\frac{\hat{\mu}_2(\dd x)}{\absa{x-z+\mathsf{w}_1}^2}=\frac{-\im\mathsf{w}_2-\im r_1 }{\eta-\im\mathsf{w}_1}\,,
\ee
as well as $\int_{\R}\frac{\hat{\mu}_1(\dd x)}{\absa{x-z+\mathsf{w}_2}^2}=\frac{-\im\mathsf{w}_1-\im r_2}{\eta-\im\mathsf{w}_2}$.

Subtracting \eqref{eq: sce7} from \eqref{eq: sce6} yields,
\be\label{eq: sce diff}
\left\{
\begin{split}
	\mathfrak{w}_2-\mathsf{w}_2& = \int_\R \frac{(\mathfrak{w}_1-\mathsf{w}_1)\hat{\mu}_2(\dd x)}{(x-z+\mathfrak{w}_1)(x-z+\mathsf{w}_1)}+r_1\\
	\mathfrak{w}_1-\mathsf{w}_1& = \int_\R \frac{(\mathfrak{w}_2-\mathsf{w}_2)\hat{\mu}_1(\dd x)}{(x-z+\mathfrak{w}_2)(x-z+\mathsf{w}_2)}+r_2
\end{split}
\right.
\ee
The equations are affine equations in $\mathfrak{w}_2-\mathsf{w}_2, \mathfrak{w}_1-\mathsf{w}_1$.  Therefore, we denote 
\ben
	\mathsf{a}:=  \int_\R \frac{\hat{\mu}_2(\dd x)}{(x-z+\mathfrak{w}_1)(x-z+\mathsf{w}_1)}\,,\quad \mathsf{b}:=\int_\R \frac{\hat{\mu}_1(\dd x)}{(x-z+\mathfrak{w}_2)(x-z+\mathsf{w}_2)}\,.
\ee
The identities obtained right before \eqref{eq: sce diff} with Cauchy-Shwartz inequality imply that 
\ben
\absa{	\mathsf{a}\mathsf{b}}\leq \left(\frac{-\im\mathfrak{w}_2}{\eta-\im\mathfrak{w}_1}\frac{-\im\mathfrak{w}_1}{\eta-\im\mathfrak{w}_2}\frac{-\im\mathsf{w}_2-\im r_1 }{\eta-\im\mathsf{w}_1}\frac{-\im\mathsf{w}_1-\im r_2 }{\eta-\im\mathsf{w}_2}\right)^{1/2}\,.
\ee
An elementary but tedious calculation yields $\absa{	\mathsf{a}\mathsf{b}}\leq 1-c\eta^2$ with overwhelming probability.  Also, using the fact that $\im \mathfrak{w}_{1,2}\leq 0$, $\im\mathsf{w}_{1,2}\leq 0$, we have the bound
\ben
	\abs{\mathsf{a}}\vee \abs{\mathsf{b}}\leq c\eta^{-2}\,,\text{ with probability $1-2N\exp(-cN^\nu)$.}
\ee
Equations \eqref{eq: sce diff} can be written as 
\ben
	\begin{bmatrix}
		-\mathsf{a} & 1 \\
		1 & -\mathsf{b}
	\end{bmatrix} 
	\begin{bmatrix}
	\mathfrak{w}_2-\mathsf{w}_2 \\ \mathfrak{w}_1-\mathsf{w}_1
\end{bmatrix} = \begin{bmatrix}
r_1\\r_2
\end{bmatrix}\,.
\ee
According to the inversion formula for $2\times 2$ matrices,
\ben
\begin{bmatrix}
	\mathfrak{w}_2-\mathsf{w}_2 \\ \mathfrak{w}_1-\mathsf{w}_1
\end{bmatrix} = \frac{1}{\mathsf{a}\mathsf{b}-1} \begin{bmatrix}
-\mathsf{b} & -1 \\
-1 & -\mathsf{a}
\end{bmatrix}  \begin{bmatrix}
r_1\\r_2
\end{bmatrix}\,.
\ee
Hence,
\ben
	\absa{\mathfrak{w}_2-\mathsf{w}_2}\vee \absa{\mathfrak{w}_1-\mathsf{w}_1} \leq \frac{\abs{\mathsf{b}}+ \abs{\mathsf{a}}+ 1}{1-\absa{\mathsf{a}\mathsf{b}}} \absa{r_1}\vee \absa{r_2}\leq c\eta^{-4} \absa{r_1}\vee \absa{r_2}\,,
\ee
with probability $1-2N\exp(-cN^\nu)$.  Recalling  \eqref{eq: w1diff1} we obtain,
\ben
	\frac{(\hat{\cG}\bar{V}^*X\bar{V})_{ii}}{\hat{\cG}_{ii}} =\mathsf{w}_1+ \mathcal{O}\left(\frac{N^\nu}{\sqrt{N}\eta^5}\right) = \mathfrak{w}_1+ \mathcal{O}\left(\frac{N^\nu}{\sqrt{N}\eta^{13}}\right)\,,
\ee
with probability $1-2N\exp(-cN^\nu)$.  Plugging this back to the first identity in \eqref{eq: sce1} and recalling Proposition \ref{prop: imcG}, we have, with probability $1-2N\exp(-cN^\nu)$,
\ben
	\absa{\hat{\cG}_{ii} - \frac{1}{-z+\bar{Y}_i +\mathfrak{w}_1}} = \mathcal{O}\left(\frac{N^\nu}{\sqrt{N}\eta^{15}}\right)\,.
\ee
The conclusion of the theorem follows.  
\end{proof}
We define the domain
\begin{align}
\Sigma = \{z=E+\ii \eta: \eta \in[N^{-1/40},1], E\in[-2K,2K] \}.
\end{align}
On this domain we have the following estimate.
\begin{cor} We have with probability at least $1 - \e^{ - c N^{c} }$ for some $c>0$,
\be\label{ineq: cGii}
	\sup_{(z,t)\in \Sigma\times [0,T]}	\absa{{\cG}_{ii} - \frac{1}{-z+\bar{Y}_i +\mathfrak{w}_1}} = \mathcal{O}\left(\frac{N^\nu}{\sqrt{N}\eta^{15}}\right)\,.
\ee	
	
\end{cor}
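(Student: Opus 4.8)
The plan is to bootstrap the pointwise-in-$(z,t)$ estimate of Theorem \ref{thm: edge law} into the asserted uniform bound, after first trading $\mathcal{G}_{ii}$ for $\hat{\mathcal{G}}_{ii}$ via a crude perturbation estimate. Throughout I fix the exponent $\nu>0$ once and for all, and I allow polynomial prefactors $N^{O(1)}$ to be absorbed into the stretched-exponential probabilities at the end.

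\textbf{Step 1: replacing $\mathcal{G}$ by $\hat{\mathcal{G}}$.} On the event of Theorem \ref{thm: Ut-I}, where $\sup_{0\le t\le T}\norm{U(t)-I}\le N^{-10\mathfrak{b}}$, the decomposition \eqref{eq: cH1} already recorded in the text gives $\norm{\mathcal{H}(t)-\hat{\mathcal{H}}(t)}\lesssim N^{-1-8\mathfrak{b}}$ uniformly in $t\in[0,T]$. The second resolvent identity then yields, for every $(z,t)\in\Sigma\times[0,T]$,
\be
\norm{\mathcal{G}(z,t)-\hat{\mathcal{G}}(z,t)}\le \norm{\mathcal{G}(z,t)}\,\norm{\mathcal{H}(t)-\hat{\mathcal{H}}(t)}\,\norm{\hat{\mathcal{G}}(z,t)}\lesssim \eta^{-2}N^{-1-8\mathfrak{b}}\,,
\ee
which for $\eta\ge N^{-1/40}$ is at most $N^{-1+1/20}\ll N^{\nu}N^{-1/2}\eta^{-15}$. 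Hence it suffices to prove the bound with $\hat{\mathcal{G}}_{ii}$ in place of $\mathcal{G}_{ii}$.

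\textbf{Step 2: net and interpolation.} Let $D$ be a large constant and let $\mathcal{N}\subset\Sigma\times[0,T]$ be a grid of mesh $N^{-D}$, which has $N^{O(D)}$ points. Applying Theorem \ref{thm: edge law} at each point of $\mathcal{N}$ and taking a union bound, the estimate holds simultaneously on $\mathcal{N}$ with probability at least $1-N^{O(D)}\e^{-cN^\nu}$. To pass to all of $\Sigma\times[0,T]$ I need Lipschitz control of both sides. For the resolvent entry, $\abs{\partial_z\hat{\mathcal{G}}_{ii}}=\abs{(\hat{\mathcal{G}}^2)_{ii}}\le\eta^{-2}$; and since $\hat{\mathcal{H}}(t)=\bar{V}(t)^*X\bar{V}(t)+Y+(T-t)\hat{Y}$ with $\bar{V}(t)=VU(t)^*$ and $\norm{X},\norm{\hat{Y}}$ bounded, Theorem \ref{thm: Us-Ut} (applied to each of the $N^{O(D)}$ mesh sub-intervals) gives $\norm{\hat{\mathcal{H}}(t)-\hat{\mathcal{H}}(s)}\lesssim \abs{t-s}^{1/4}N^{1/4}+\abs{t-s}\log N$ off an event of probability $N^{O(D)}\e^{-N^{\mathfrak{a}/3}}$, whence $\norm{\hat{\mathcal{G}}(z,t)-\hat{\mathcal{G}}(z,s)}\lesssim \eta^{-2}(\abs{t-s}^{1/4}N^{1/4})$. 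For the deterministic term, $\abs{-z+\bar{Y}_i+\mathfrak{w}_1}\ge\eta$ since $\mathfrak{w}_1\in\C^-$, and the stability of the subordination system \eqref{eq: sce4} — the same $2\times2$ inversion estimate that appears in the proof of Theorem \ref{thm: edge law}, now applied to the $(z,t)$-derivatives of the coefficients $\hat{\mathfrak{m}}_1,\hat{\mathfrak{m}}_2$ — shows $\mathfrak{w}_1$ is Lipschitz in $(z,t)$ with constant $\eta^{-C}$ for some fixed $C$, so the deterministic term is Lipschitz with constant $\lesssim\eta^{-C-2}$. Choosing $D$ large enough (depending only on $C$), every interpolation error is $\ll N^{\nu}N^{-1/2}\eta^{-15}$ on $\Sigma$. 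Combining this with the grid estimate and Step 1, and absorbing the $N^{O(D)}$ prefactors into the stretched-exponential probabilities, gives the corollary with probability $1-\e^{-cN^c}$ for some $c>0$.

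\textbf{Main obstacle.} The delicate point is continuity in $t$: unlike the $z$-direction, where everything follows from resolvent-derivative bounds, controlling $\norm{\hat{\mathcal{H}}(t)-\hat{\mathcal{H}}(s)}$ forces us to invoke the quantitative path regularity of $U(t)$ from Theorem \ref{thm: Us-Ut}, and we must separately verify that the deterministic profile $\mathfrak{w}_1(z,t)$ inherits enough regularity in $t$ from the stability of \eqref{eq: sce4}. Both are routine but have to be carried out with explicit (non-optimal) powers of $\eta$, and the union bounds over the two-dimensional net must be organized so that the exponentially small failure probabilities dominate the polynomial number of grid points.
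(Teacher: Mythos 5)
Your proposal is correct and follows essentially the same route as the paper: a union bound of Theorem \ref{thm: edge law} over a discrete net in $(z,t)$, H{\"o}lder continuity in $t$ supplied by Theorem \ref{thm: Us-Ut} (and trivial resolvent bounds in $z$) to pass to the full supremum, and a resolvent-identity perturbation from $\hat{\cG}$ to $\cG$ using $\norm{\cH-\hat{\cH}}\lesssim N^{-1-8\mathfrak{b}}$ from Theorem \ref{thm: Ut-I}. The only deviations are cosmetic — you use the crude $\eta^{-2}$ operator-norm bound where the paper uses the Ward identity (both suffice against the generous $\eta^{-15}$ target), and you take a polynomial mesh with explicit Lipschitz constants, being in fact slightly more careful than the paper about the $(z,t)$-regularity of the deterministic profile $\mathfrak{w}_1$.
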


\begin{proof}
Take a discrete subset $S\subset \Sigma_\nu \times [0,T]$, whose $N^{-\log N}$-neighborhood covers $\Sigma\times [0,T]$. By Theorem \ref{thm: edge law}, we have, with probability $1-\exp(-cN^\nu)$ (with a new universal constant $c>0$),

\ben
	\sup_{(z,t)\in S}	\absa{\hat{\cG}_{ii} - \frac{1}{-z+\bar{Y}_i +\mathfrak{w}_1}} = \mathcal{O}\left(\frac{N^\nu}{\sqrt{N}\eta^{15}}\right)\,.
\ee
By Theorem \ref{thm: Us-Ut}, the quantity $\hat{\cG}(z,t)$ is $C^{1/4}$ in both $t$ and $z$, with a H{\"o}lder constant at most $N^2$.  Therefore, the above supremum can be taken over the entirety of $\Sigma\times [0,T]$, with a small error of $\mathcal{O}(N^{2-\frac{\log N}{4}})$ on the right hand side.  Therefore, we have, with probability $1-\exp(-cN^\nu)$,
\be\label{ineq: hatGii}
	\sup_{(z,t)\in \Sigma\times [0,T]}	\absa{\hat{\cG}_{ii} - \frac{1}{-z+\bar{Y}_i +\mathfrak{w}_1}} = \mathcal{O}\left(\frac{N^\nu}{\sqrt{N}\eta^{15}}\right)\,.
\ee	
Next, we use the resolvent identity $Q_1^{-1}-Q_2^{-1}=Q_1^{-1}(Q_2-Q_1)Q_2^{-1}$ to see
	\ben
	\begin{split}
		\abs{\hat{\cG}_{ii}-\cG_{ii}} &=\abs{\sum_{k,l} \hat{\cG}_{ik}(\cH-\hat{\cH})_{kl}\cG_{li}}\\
		&\leq \norm{\cH-\hat{\cH}} \sqrt{\sum_k \abs{\hat{\cG}_{ik}}^2}\sqrt{\sum_k \abs{\cG_{li}}^2}\,.
	\end{split}
	\ee
	By the Ward Identity, $\sqrt{\sum_k \abs{\hat{\cG}_{ik}}^2}=\sqrt{\im \hat{\cG}_{ii}/\eta}$, $\sqrt{\sum_k \abs{{\cG}_{ik}}^2}=\sqrt{\im {\cG}_{ii}/\eta}$.  Also note that $\eta\geq N^{-1+\nu}$ and that $\norm{\cH-\hat{\cH}}\leq N^{-1-8\mathfrak{b}}$ with probability $1-\e^{-N^{10\mathfrak{b}}}$.  Therefore, with probability $1-\e^{-N^{10\mathfrak{b}}}$, for any $ (z,t)\in\Sigma\times[0,T]$,
	\be
	\begin{split}
		\abs{\hat{\cG}_{ii}-\cG_{ii}} &\leq N^{-1-8\mathfrak{b}}\eta^{-1}\sqrt{\im \hat{\cG}_{ii}\im {\cG}_{ii}}\\
		&\leq N^{-1-8\mathfrak{b}}\eta^{-1} \sqrt{\abs{\hat{\cG}_{ii}} (\abs{\hat{\cG}_{ii}} +\abs{\hat{\cG}_{ii}-\cG_{ii}} )}\,.
	\end{split}
	\ee
	A simple calculation yields, for any $1\leq i\leq N$,
	\be\label{eq: hatG-G1}
	\abs{\hat{\cG}_{ii}-\cG_{ii}} \leq 2N^{-1-8\mathfrak{b}}\eta^{-1} \abs{\hat{\cG}_{ii}}\leq \frac{2}{ N \eta^2} \leq \frac{ N^{\nu}}{ \sqrt{N} \eta^{15}}.
	\ee
	Combining \eqref{ineq: hatGii} and \eqref{eq: hatG-G1} we obtain,
\be
\sup_{(z,t)\in \Sigma\times [0,T]}	\absa{{\cG}_{ii} - \frac{1}{-z+\bar{Y}_i +\mathfrak{w}_1}} = \mathcal{O}\left(\frac{N^\nu}{\sqrt{N}\eta^{15}}\right)
\ee	
with probability $1-\e^{-N^{10\mathfrak{b}}}$.
\end{proof}

\subsection{Deterministic estimates}

In the previous subsection we defined a diagonal matrix $\bar{Y}$ (see \eqref{def: barY}) through
\ben
	\bar{Y}(t):= Y+(T-t)\bar{Y}\,.
\ee
Recalling the assumption \eqref{reg y}, upper bound \eqref{hatY bound} and the definition \eqref{def: T 1} of $T$, we have
\be\label{reg barY}
	\sup_{0\leq k\leq N} \absa{\bar{Y}_k-y_k^\star} \lesssim N^{-1+c}+ N^{-1+\mathfrak{b}}\log N = o\left( N^{-\frac{9}{10}}\right)\,.
\ee
In this subsection we prove some deterministic estimates on the free convolution of $\frac{1}{N}\sum_i \delta_{X_i}$ and $\frac{1}{N}\sum_i \delta_{\bar Y_i}$. The estimates are needed in the next sections.

Recall that $\mu_1$ and $\mu_2$ are limits of empirical measures $\frac{1}{N}\sum_i \delta_{X_i}$ and $\frac{1}{N}\sum_i \delta_{\bar Y_i}$ respectively. Denote by $m_1$ and $m_2$ the stieltjes transforms of $\mu_1$ and $\mu_2$, respectively, i.e.,
\be
m_1(z):= \int_\R \frac{\mu_1(\dd x)}{x-z},\quad m_2(z):=\int_{\R} \frac{\mu_2(\dd y)}{y-z}\,,\quad \text{ for all } z\in\C^+\,.
\ee
Recall that we defined $\mathfrak{m}_1$ and $\mathfrak{m}_2$ to be the Stieltjes transforms of $\frac{1}{N} \sum_i \delta_{X_i}$ and $\frac{1}{N}\sum_i\delta_{\bar{Y}_i}$, respectively. We first prove estimates on differences $m_1-\mathfrak{m}_1$ and $m_2-\mathfrak{m}_2$, which will be used later on.  Throughout this section, the generic constant $p$ may increase from line to line, but will change only for finitely many times, hence will remain finite.

\begin{prop}\label{prop: m1-fm1}
	Denote $z= E+\ii \eta$.  There is a $p>0$ such that if $\eta\geq N^{-\frac{1}{p}}$, then
	\be
	\begin{split}
		\absa{m_1(z)-\mathfrak{m}_1(z)}&\leq N^{-\frac{1}{p}}\,,\\
		\absa{m_2(z)-\mathfrak{m}_2(z)}&\leq N^{-\frac{1}{p}}\,.
	\end{split}
	\ee
\end{prop}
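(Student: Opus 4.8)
The plan is to prove the two inequalities separately: the bound for $m_1$ is almost immediate from the hypotheses, while the bound for $m_2$ carries the real content and I would obtain it via a quantile discretization of $\mu_2$.

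\emph{The estimate for $m_1$.} Since $X=\diag(x_1,\dots,x_N)+\e^{-N}Q$, Proposition~\ref{prop: tdX} applied to $P=\diag(x_1,\dots,x_N)$, together with \eqref{tdX bd}, shows that off an event of probability at most $\e^{-\e^{N/2}}$ the eigenvalues $X_1,\dots,X_N$ of $X$ obey $\max_k|X_k-x_k|\le \e^{-N/2}$. Hence, for $\eta=\Im z\ge \e^{-N/4}$,
\[
\Big|\mathfrak{m}_1(z)-\frac1N\sum_{i=1}^N\frac{1}{x_i-z}\Big|\le \frac1N\sum_{i=1}^N\frac{|X_i-x_i|}{|X_i-z|\,|x_i-z|}\le \frac{\e^{-N/2}}{\eta^2}\le \e^{-N/4}.
\]
Combining this with the regularity assumption \eqref{reg x} on $\mu_X$, which bounds $\big|\frac1N\sum_i(x_i-z)^{-1}-m_1(z)\big|$ by $N^{-\cx}$ for $\eta\ge N^{-\cx}$, yields $|m_1(z)-\mathfrak{m}_1(z)|\le N^{-\cx}+\e^{-N/4}\le 2N^{-\cx}$ on the domain $\eta\ge N^{-\cx}$; choosing $p$ with $1/p<\cx$ (and $p$ large) absorbs both the exponent and the constant.

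\emph{The estimate for $m_2$.} Here $\mathfrak m_2$ is deterministic, and I would compare $\tfrac1N\sum_k\delta_{\bar Y_k}$ with $\mu_2$ by going through the $N$-quantiles $y^\star_k$ of $\mu_2$. First, \eqref{reg barY} (a consequence of \eqref{reg y}, \eqref{hatY bound}, and $T=N^{-1+\mathfrak b}$) gives $\max_k|\bar Y_k-y^\star_k|=o(N^{-9/10})$, so
\[
\Big|\mathfrak{m}_2(z)-\frac1N\sum_{k}\frac{1}{y^\star_k-z}\Big|\le \frac{\max_k|\bar Y_k-y^\star_k|}{\eta^2}=\frac{o(N^{-9/10})}{\eta^2}.
\]
It then remains to compare $\mathfrak m_2^\star(z):=\frac1N\sum_k(y^\star_k-z)^{-1}$ with $m_2(z)$. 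Let $F_2$ be the distribution function of $\mu_2$ (continuous, since $\mu_2$ has a density) and $F_N$ that of $\tfrac1N\sum_k\delta_{y^\star_k}$. From the defining property $F_2(y^\star_k)=k/N$ in \eqref{def: quantile} and the monotonicity of $F_2$ one checks $F_N(x)=\frac1N\lfloor NF_2(x)\rfloor$, hence the sharp Kolmogorov bound $0\le F_2(x)-F_N(x)\le 1/N$ for every $x$, with $F_2-F_N$ supported in $[-K,K]$. Since the functions $y\mapsto (y-z)^{-1}$ and $F_2-F_N$ have no common discontinuities and the boundary terms at $\pm\infty$ vanish, integration by parts gives
\[
m_2(z)-\mathfrak m_2^\star(z)=\int_{\R}\frac{d(F_2-F_N)(y)}{y-z}=\int_{\R}\frac{(F_2-F_N)(y)}{(y-z)^2}\,dy,
\]
whence $|m_2(z)-\mathfrak m_2^\star(z)|\le \frac1N\int_{\R}\frac{dy}{(y-E)^2+\eta^2}\le \frac{\pi}{N\eta}$. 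Adding the two displays, $|m_2(z)-\mathfrak m_2(z)|\le \frac{\pi}{N\eta}+\frac{o(N^{-9/10})}{\eta^2}$, which for $\eta\ge N^{-1/20}$ is $O(N^{-4/5})$; taking $p\ge 20$ large enough to absorb the constant completes the proof.

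\emph{Main obstacle.} There is no deep obstacle — as the remark following the assumptions indicates, this proposition is meant to be routine. The one genuine point is the quantile-discretization step for $m_2$: producing the sharp $\|F_N-F_2\|_\infty\le 1/N$ bound and then transferring it to Stieltjes transforms via integration by parts, so that the resulting error is controlled by $\tfrac1N\int|y-z|^{-2}\,dy\lesssim (N\eta)^{-1}$ rather than merely by the modulus of continuity of $F_2$. Everything else is bookkeeping with the a priori inputs \eqref{reg x} and \eqref{reg barY}, together with the finitely many adjustments of the exponent $p$ — to accommodate both inequalities, the two domain restrictions $\eta\ge N^{-\cx}$ and $\eta\ge N^{-1/20}$, and the $O(1)$ prefactors — which is exactly the slack the statement grants by letting $p$ increase from line to line.
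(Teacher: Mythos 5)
Your proof is correct and follows essentially the same route as the paper: the first bound is read off from \eqref{reg x} (your extra care with the $\e^{-N}Q$ perturbation via \eqref{tdX bd} is a harmless refinement the paper silently omits), and the second is obtained by comparing $\bar Y_k$ to the quantiles $y_k^\star$ via \eqref{reg barY} and then the quantile measure to $\mu_2$. The only deviation is in that last sub-step, where you bound the discretization error through $\|F_N-F_2\|_\infty\le 1/N$ and integration by parts, getting $\pi/(N\eta)$, whereas the paper estimates interval by interval and gets $\lesssim (N\eta^2)^{-1}$; both are dominated by the $N^{-9/10}\eta^{-2}$ term, so the conclusions coincide.
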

\begin{proof}  The first inequality is by the assumption \eqref{reg x}.  Denote $y_0^\star := \inf \supp \mu_2$. 
	Recall that we defined 
	 $(y_k^\star)_{1\leq k\leq N}$ to be the $N$-quantiles of $\mu_1$ and $\mu_2$, see \eqref{def: quantile}.  Denote $x_0^\star := \inf \supp \mu_1$, $y_0^\star := \inf \supp \mu_2$. By definition,
	\ben
	\begin{split}
		\absa{m_2(z)-\mathfrak{m}_2(z)} 
		&\leq \sum_k\absa{ \int_{y_{k-1}^\star}^{y_k^\star} \frac{\mu_1(\dd y)}{y-z} - \frac{1}{\bar{Y}_k-z)}}\\
		& \leq \frac{1}{N}\sum_k \left[\absa{\frac{1}{y_k^\star-z}-\frac{1}{\bar{Y}_k-z}} + \sup_{y\in[y_{k-1}^\star,y_k^\star]}\absa{\frac{1}{y_k^\star-z}-\frac{1}{y-z}} \right]\\
		& \leq \frac{1}{N}\sum_k \left(\frac{N^{-\frac{9}{10}}}{\eta^2}+\frac{y_k^\star-y_{k-1}^\star}{\eta^2}\right)\,.
	\end{split}
	\ee
	In the last inequality we have used the bound \eqref{reg barY}.  Hence we have
	\be\label{ineq: m2-fm2}
	\absa{m_2(z)-\mathfrak{m}_2(z)}\leq N^{-\frac{9}{10}}\eta^{-2}\,,
	\ee 
	for sufficiently large $N$.  The conclusion follows.
\end{proof}

We consider the deterministic, $N$-independent equations, which is the limiting form of \eqref{eq: sce4}:
\be\label{eq: scelimit}
\left\{\begin{split}
	m&= m_2(z-w_1)\\
	m&=m_1(z-w_2)\\
	\frac{1}{m}&= w_1+w_2-z
\end{split}\right.
\ee
The following Proposition is a summary of Theorem 2.3 in \cite{Be06} and Theorem 3.3 \cite{Be08}.
\begin{prop}\label{prop: w continuity}
	The system \eqref{eq: scelimit} has a unique solution $(m,w_1,w_2):\C^+\to \C^+\times \C^-\times \C^-$ which depends holomorphically on $z$. $m(z)$ can be extended analytically to $I\subset\R$ if $\mu=\mu_1\boxplus \mu_2$ has a density on $I$ 
	 bounded away from zero. Moreover, $(w_1,w_2)$ can be continuously extended to $\R$. In particular, $\abs{w_1}\vee \absa{w_2}$ is uniformly bounded on compact subsets of $\C^+\cup\R$.
\end{prop}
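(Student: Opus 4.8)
The plan is to recognize the system \eqref{eq: scelimit} as the standard subordination description of the free additive convolution $\mu=\mu_1\boxplus\mu_2$ and then to invoke the regularity theory for free convolutions. First I would recall that, since $\mu_1$ and $\mu_2$ are probability measures with compact support, Voiculescu's subordination theorem (in the form of \cite{Bi98,BB07}) furnishes analytic subordination functions $\omega_1,\omega_2:\C^+\to\C^+$ with $\im\omega_j(z)\geq\im z$ satisfying $G_{\mu_1}(\omega_1(z))=G_{\mu_2}(\omega_2(z))=G_\mu(z)$ and $\omega_1(z)+\omega_2(z)=z+1/G_\mu(z)$, where $G_\nu(z)=\int \frac{\nu(\dd x)}{z-x}$. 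Converting to the normalization used in this paper, $m_j=-G_{\mu_j}$, and setting $m:=-G_\mu$, $w_1:=z-\omega_2(z)$, $w_2:=z-\omega_1(z)$, a direct substitution shows that $(m,w_1,w_2)$ solves \eqref{eq: scelimit}; one has $m(\C^+)\subset\C^+$ and $\im w_j=\im z-\im\omega_{3-j}(z)\leq 0$, so $w_j$ lies in the closed lower half-plane (and in $\C^-$ itself away from the degenerate case in which $\mu_1$ or $\mu_2$ is a single atom, which is excluded here since $\mu_2$ has a density). Holomorphy of $(m,w_1,w_2)$ on $\C^+$ is inherited from that of $\omega_1,\omega_2,G_\mu$.

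For uniqueness of the solution in $\C^+\times\C^-\times\C^-$, I would appeal to the $N\to\infty$ version of \cite[Theorem 4.1]{BB07} --- the same statement already used for the finite-$N$ system \eqref{eq: sce4} --- which applies verbatim with the empirical laws $\frac{1}{N}\sum_i\delta_{X_i}$, $\frac{1}{N}\sum_i\delta_{\bar Y_i}$ replaced by $\mu_1,\mu_2$. Equivalently, one uses the characterization of $(\omega_1,\omega_2)$ as the unique Denjoy--Wolff attracting fixed point of the iteration scheme of \cite{BB07}: any triple solving \eqref{eq: scelimit} within the prescribed half-planes produces such a fixed point, hence must coincide with $(-G_\mu,\,z-\omega_2,\,z-\omega_1)$.

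It remains to establish the boundary regularity. If $\mu=\mu_1\boxplus\mu_2$ has a density on the open interval $I$ bounded away from zero, then --- using that the density of a free additive convolution is real-analytic wherever it is positive, together with \cite[Theorem 3.3]{Be08} --- the Cauchy transform $G_\mu$, and hence $m$, extends analytically across $I$, and the subordination functions $\omega_1,\omega_2$ extend continuously to $I$. More generally, for compactly supported $\mu_1,\mu_2$ the functions $\omega_1,\omega_2$ extend continuously to all of $\C^+\cup\R$ by \cite[Theorem 2.3]{Be06} (see also \cite{Be08}); hence so do $w_1=z-\omega_2$ and $w_2=z-\omega_1$. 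Since a continuous function on the closed half-plane is bounded on each compact subset, this yields the asserted uniform boundedness of $|w_1|\vee|w_2|$ on compact subsets of $\C^+\cup\R$.

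The only genuinely delicate point I anticipate is bookkeeping rather than mathematics: carefully matching the Stieltjes-transform normalization used throughout this paper with the (reciprocal) Cauchy-transform conventions of \cite{Bi98,BB07,Be06,Be08}, so that the three equations of \eqref{eq: scelimit} indeed correspond to the subordination identities $G_{\mu_1}(\omega_1)=G_{\mu_2}(\omega_2)=G_\mu$ and $\omega_1+\omega_2=z+1/G_\mu$, and then checking that the hypothesis ``density bounded away from zero on $I$'' is exactly the one under which the cited analytic/continuous extension theorems apply.
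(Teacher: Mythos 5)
Your proposal is correct and follows essentially the same route as the paper, which offers no independent proof but presents this proposition as a summary of Theorem 2.3 in \cite{Be06} and Theorem 3.3 in \cite{Be08} (with the system \eqref{eq: scelimit} understood as the subordination equations for $\mu_1\boxplus\mu_2$, exactly as in your normalization matching). Your additional care with the uniqueness statement via \cite{BB07} and with the translation $w_1=z-\omega_2$, $w_2=z-\omega_1$ is consistent with what the paper implicitly relies on.
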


We view \eqref{eq: sce4} as a perturbed version of \eqref{eq: scelimit}. The following theorem proves that the solution $(\fm,\fw_1,\fw_2)$ is not far from the solution to \eqref{eq: scelimit}.
\begin{thm}\label{thm: w bound}
	Let $(\fm,\fw_1,\fw_2)$ be the solution to \eqref{eq: sce4}. There is a universal constant $p>0$ such that the following holds. For $z\in \{z=E+\ii \eta: E\in[-2K,2K], \eta\in [N^{-p^{-2}},1]\}$, we uniformly have 
	\ben
		\absa{w_1-\fw_1}\vee \absa{w_2-\fw_2}\leq N^{-\frac{1}{p}}\eta^{-p}\,, \quad \absa{\fw_1}\vee\absa{\fw_2}\lesssim 1\,.
	\ee
	
\end{thm}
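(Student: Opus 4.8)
The plan is to treat the $N$-dependent system \eqref{eq: sce4} as a perturbation of the limiting system \eqref{eq: scelimit}, in exactly the way the proof of Theorem \ref{thm: edge law} treats \eqref{eq: sce3} as a perturbation of \eqref{eq: sce4}, with the role of the random fluctuation errors $\mathcal{R}$, $\mathsf{R}$ there now played by the deterministic discrepancies $m_1-\mathfrak{m}_1$, $m_2-\mathfrak{m}_2$, which are $\le N^{-1/p}$ by Proposition \ref{prop: m1-fm1}. First I would eliminate $m$ and $\mathfrak{m}$ and put both systems in subordination form: by Proposition 2.2 of \cite{Ma92}, $\hat{m}_j(\zeta):=-\zeta-1/m_j(\zeta)$ is the Stieltjes transform of a finite positive measure $\hat\mu_j$ on a fixed bounded interval (and likewise $\hat{\mathfrak{m}}_j(\zeta):=-\zeta-1/\mathfrak{m}_j(\zeta)$), so \eqref{eq: scelimit}, \eqref{eq: sce4} become $-w_2=\hat{m}_2(z-w_1)$, $-w_1=\hat{m}_1(z-w_2)$ and $-\mathfrak{w}_2=\hat{\mathfrak{m}}_2(z-\mathfrak{w}_1)$, $-\mathfrak{w}_1=\hat{\mathfrak{m}}_1(z-\mathfrak{w}_2)$. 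Writing $d_j:=w_j-\mathfrak{w}_j$, subtracting, and inserting $\hat{m}_2(z-\mathfrak{w}_1)$ and $\hat{m}_1(z-\mathfrak{w}_2)$, one obtains the affine system
\[
\begin{bmatrix} \mathsf{a} & -1 \\ -1 & \mathsf{b} \end{bmatrix}\begin{bmatrix} d_1 \\ d_2 \end{bmatrix}=\begin{bmatrix} \mathsf{r}_1 \\ \mathsf{r}_2 \end{bmatrix},
\]
with $\mathsf{a}=\int\hat\mu_2(\d x)\,[(x-z+w_1)(x-z+\mathfrak{w}_1)]^{-1}$, $\mathsf{b}$ the analogous integral against $\hat\mu_1$, and source terms $\mathsf{r}_1=(m_2-\mathfrak{m}_2)(m_2\mathfrak{m}_2)^{-1}$, $\mathsf{r}_2=(m_1-\mathfrak{m}_1)(m_1\mathfrak{m}_1)^{-1}$ evaluated at $z-\mathfrak{w}_1$ and $z-\mathfrak{w}_2$ respectively. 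The problem then reduces to bounding these coefficients and inverting the $2\times2$ system.

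For the sources, Proposition \ref{prop: m1-fm1} together with the elementary lower bound $|m_j(z-\mathfrak{w}_k)|,|\mathfrak{m}_j(z-\mathfrak{w}_k)|\gtrsim\eta$ — valid because $\Im(z-\mathfrak{w}_k)\ge\eta$ once $\Im\mathfrak{w}_k\le 0$ and $|\mathfrak{w}_k|\lesssim 1$ — gives $|\mathsf{r}_j|\lesssim\eta^{-2}N^{-1/p}$. The key estimate is the stability bound $|\mathsf{a}\mathsf{b}|\le 1-c\eta$: taking imaginary parts in the subordination equations yields the identity $\int\hat\mu_2(\d x)\,|x-z+w_1|^{-2}=(-\Im w_2)/(\eta-\Im w_1)$ and its three analogues (up to an $\Im\mathsf{r}$-correction when the ``wrong'' subordination function is inserted), so Cauchy--Schwarz applied to $\mathsf{a}$ and to $\mathsf{b}$ bounds $|\mathsf{a}\mathsf{b}|$ by a product which regroups into factors $\frac{a}{\eta+a}\cdot\frac{b}{\eta+b}$ with $a,b\ge 0$; since $w_j,\mathfrak{w}_j\in\C^-$ are bounded, each such factor is $\le 1-c\eta$, and the correction terms are $O(\eta^{-O(1)}N^{-1/p})$, negligible on $\eta\ge N^{-1/p^2}$ for $p$ large. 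Using also the crude bound $|\mathsf{a}|\vee|\mathsf{b}|\lesssim\eta^{-2}$ and $|\mathsf{a}\mathsf{b}-1|\ge c\eta$, inverting the $2\times 2$ system gives $|d_j|\lesssim\eta^{-q}N^{-1/p}$ for a fixed $q$, which after enlarging $p$ is the claimed bound on $|w_1-\mathfrak{w}_1|\vee|w_2-\mathfrak{w}_2|$.

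All of this is conditional on the a priori bound $|\mathfrak{w}_1|\vee|\mathfrak{w}_2|\lesssim 1$, which is simultaneously part of the conclusion and an input to the estimates above, and supplying it without circularity is the main obstacle. I would run a continuity (bootstrap) argument in $\eta$. Fix a constant $M$, uniform in $N$, bounding $|w_j|$ on $\{|\Re z|\le 2K,\ \Im z\ge 0\}$ (Proposition \ref{prop: w continuity}) and also bounding $|\mathfrak{w}_j(E+\i R)|$ for all $E$ at a suitably large scale $R$ (by the standard large-$\Im z$ asymptotics of the subordination functions of \eqref{eq: sce4}, from \cite{Be06,Be08,BB07}). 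Then $\mathcal{E}:=\{\eta\in[N^{-1/p^2},R]:|\mathfrak{w}_1(E+\i\eta)|\vee|\mathfrak{w}_2(E+\i\eta)|\le 2M\}$ is nonempty and closed, closedness by holomorphicity of the solution of \eqref{eq: sce4} (Theorem 4.1 of \cite{BB07}). On $\mathcal{E}$ every hypothesis of the preceding estimates holds, so $|d_j|\lesssim\eta^{-q}N^{-1/p}\le N^{q/p^2-1/p}\to 0$ provided $p$ is chosen large enough that $q/p^2<1/p$; hence $|\mathfrak{w}_j|\le M+o(1)<2M$ on $\mathcal{E}$, so $\mathcal{E}$ is also open and by connectedness $\mathcal{E}=[N^{-1/p^2},R]$. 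This yields simultaneously $|\mathfrak{w}_j|\le 2M\lesssim 1$ and, from the second paragraph, the quantitative bound for $|w_j-\mathfrak{w}_j|$. The remaining points are bookkeeping: choosing $p$ large under finitely many constraints so that the bootstrap closes and the stability correction stays below $c\eta$, and the standard verification of the base case at scale $R$.
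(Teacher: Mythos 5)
Your core argument is the same as the paper's: pass to subordination form via Proposition 2.2 of \cite{Ma92}, subtract the two systems to get an affine $2\times 2$ equation in $w_1-\fw_1$, $w_2-\fw_2$ with coefficients $\mathrm{a},\mathrm{b}$ given by mixed integrals against $\hat\mu_1,\hat\mu_2$, bound the sources by Proposition \ref{prop: m1-fm1}, and invert using the imaginary-part identities plus Cauchy--Schwarz to get $|1-\mathrm{a}\mathrm{b}|\gtrsim \eta^{O(1)}$; this is exactly the paper's proof up to bookkeeping of exponents. The one genuine difference is how you supply the a priori control on $\fw_1,\fw_2$ needed to bound the sources and the stability factor: you run a continuity/bootstrap argument in $\eta$ (base case at a large scale, the set where $|\fw_j|\le 2M$ being open, closed and nonempty), whereas the paper sidesteps any circularity with the crude deterministic bound $|\fw_1|\vee|\fw_2|\le c\eta^{-1}$, which is immediate from the representation \eqref{eq: sce6} since $\hat{\mathfrak{m}}_2$ is the Stieltjes transform of a finite measure evaluated at a point with imaginary part at least $\eta$ (using only $\Im\fw_j\le 0$ from the BB07 solution). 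The paper's route is a single pass and costs only larger powers of $\eta^{-1}$, which the generic exponent $p$ absorbs; your route yields cleaner intermediate bounds ($|\fw_j|\lesssim 1$ throughout, hence $\eta^{-2}$-type source bounds) at the price of the extra open/closed argument -- which is in fact the same device the paper deploys later, in the proof of Theorem \ref{thm: w1-fw1 bulk}, where the crude $\eta^{-1}$ bound is no longer sufficient. Your version closes: the constraints you need ($p$ larger than the fixed exponent $q$, and $N^{-1/p}\ll\eta^{3}$ on $\eta\ge N^{-1/p^2}$, so that the correction to the Cauchy--Schwarz factors stays below $c\eta$) are finitely many and compatible, and the base case can even be taken from the trivial bound $|\fw_j|\le c/\eta$ at $\eta$ of order one.
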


\begin{proof}
	The strategy is similar to what we did in the proof of Theorem \ref{thm: edge law}.  We write equations \eqref{eq: scelimit} as 
	\be\label{eq: scelimit2}
	\left\{\begin{split}
		-w_2&= \hat{m}_2(z-w_1)\\
		-w_1&=\hat{m}_1(z-w_2)
	\end{split}\right.
	\ee
	Here $\hat{m}_1(\zeta):= -\zeta-\frac{1}{m_1(\zeta)}$, $\hat{m}_2(\zeta):=-\zeta-\frac{1}{m_2(\zeta)}$.  According to Proposition 2.2 in \cite{Ma92}, $\hat{m}_1$ and $\hat{m}_2$ are Stieltjes transforms of Borel measures $\hat{\mu}_1$ and $\hat{\mu}_2$ on $\R$ that have  total mass $\int_{\R} x^2\mu_1(\dd x)$ and $\int_ \R y^2\mu_2(\dd y)$, respectively.  We write \eqref{eq: sce4} as 
	\be\label{eq: scelimit3}
	\left\{\begin{split}
		-\fw_2&= \hat{m}_2(z-\fw_1)+r_1\\
		-\fw_1&=\hat{m}_1(z-\fw_2)+r_2
	\end{split}\right.\,.
	\ee
	Above, the error terms $r_1$ and $r_2$ are given by
	\be\label{def: r1r2}
	r_1:= \frac{1}{m_2(z-\fw_1)}- \frac{1}{\fm_2(z-\fw_1)}\,,\quad r_2:= \frac{1}{m_1(z-\fw_2)} -\frac{1}{\fm_1(z-\fw_2)}\,.
	\ee

First, we derive upper bounds on $\absa{w_1-\fw_1}$ and $\absa{w_2-\fw_2}$ in terms of $r_1$ and $r_2$.  Subtracting \eqref{eq: scelimit2} from \eqref{eq: scelimit3}, we obtain
	\ben
		\left\{\begin{split}
			w_2-\fw_2&= \int_{\R} \frac{(w_1-\fw_1)\hat{\mu}_2(\dd x)}{(x-z+w_1)(x-z+\fw_1)}+r_1\\
			w_1-\fw_1&=\int_{\R} \frac{(w_2-\fw_2)\hat{\mu}_1(\dd x)}{(x-z+w_2)(x-z+\fw_2)}+r_2
		\end{split}\right.\,.
	\ee
Noting that the equations are linear in $w_2-\fw_2$ and $w_1-\fw_1$, we are lead to denote the coefficients by
	
\be\label{def: ab}
	\mathrm{a}:= \int_{\R} \frac{\hat{\mu}_2(\dd x)}{(x-z+w_1)(x-z+\fw_1)}\,,\quad \mathrm{b}:= \int_{\R} \frac{\hat{\mu}_1(\dd x)}{(x-z+w_2)(x-z+\fw_2)}\,.
\ee
The notations enable us to write the above equations as
	\ben
		\begin{bmatrix}
			-\mathrm{a} & 1\\ 1 & -\mathrm{b} 
		\end{bmatrix} 
		\begin{bmatrix}
			w_1-\fw_1\\w_2-\fw_2
		\end{bmatrix} = \begin{bmatrix} 
		r_1\\r_2
	\end{bmatrix}\,.
\ee
Inverting the 2 by 2 matrix on the left hand side, we get
\ben
	\begin{bmatrix}
		w_1-\fw_1\\w_2-\fw_2 
	\end{bmatrix} = \frac{1}{1-\mathrm{a}\mathrm{b}} \begin{bmatrix}
	\mathrm{b} & 1\\1 & \mathrm{a} 
\end{bmatrix}\begin{bmatrix}
r_1\\r_2
\end{bmatrix}\,.
\ee
A simple calculation yields the following upper bound
\be\label{ineq: w1-fw1}
	\absa{w_1-\fw_1}\vee \absa{w_2-\fw_2} \leq \frac{\absa{\mathrm{b}}+\absa{\mathrm{a}}+1}{\absa{1-\mathrm{a}\mathrm{b}}}(\absa{r_1}+\absa{r_2})\,.
\ee

In order to bound the right hand side of \eqref{ineq: w1-fw1}, we need estimates on $r_1$ and $r_2$ as well as $ \frac{\absa{\mathrm{b}}+\absa{\mathrm{a}}+1}{\absa{1-\mathrm{a}\mathrm{b}}}$. We first derive upper bounds for $\absa{r_1}$ and $\absa{r_2}$. According to Proposition \ref{prop: m1-fm1}, we have
	\ben
	\absa{r_1}\leq \frac{N^{-\frac{4}{5}}}{\absa{m_2(z-\fw_1)\fm_2(z-\fw_1)}}\,,\quad 	\absa{r_2}\leq \frac{N^{-\frac{1}{p}}}{\absa{m_1(z-\fw_2)\fm_1(z-\fw_2)}}
	\ee
	From \eqref{eq: sce6} we have a crude upper bound $\abs{\fw_1}\vee\abs{\fw_2}\leq c\eta^{-1}$.  Noting that for the Stieltjes transform $\zeta\mapsto s(\zeta)$ of any probability measure $\nu$ on $\R$,  we have a simple lower bound $\im s(\zeta)\geq \im \zeta (\absa{\zeta}+ \sup_{x\in\supp\nu }\abs{x})^{-2}$.  Therefore, the inequalities above yield upper bounds
	\be\label{ineq: r1r2}
	\absa{r_1}+\absa{r_2}\leq N^{-\frac{1}{p}}\eta^{-3}\,.
	\ee
	Letting $\eta\geq N^{-\frac{1}{5p}}$, we have
	\be\label{ineq: r1r2eta}
	\absa{r_1}+\absa{r_2}\leq \eta/2\,.
	\ee

Next, we derive an upper bound for $ \frac{\absa{\mathrm{b}}+\absa{\mathrm{a}}+1}{\absa{1-\mathrm{a}\mathrm{b}}}$. We use Cauchy-Schwartz to see
\be
	\absa{\mathrm{a}}\leq \left(\int_{\R} \frac{\hat{\mu}_2(\dd x)}{\absa{x-z+w_1}^2} \int_ \R \frac{\hat{\mu}_2(\dd x)}{\abs{x-z+\fw_1}^2}\right)^\frac{1}{2}\,,\absa{\mathrm{b}}\leq \left(\int_{\R} \frac{\hat{\mu}_1(\dd x)}{\absa{x-z+w_2}^2} \int_ \R \frac{\hat{\mu}_1(\dd x)}{\abs{x-z+\fw_2}^2}\right)^\frac{1}{2}\,,
\ee
We immediately have a crude bound
\ben
	\absa{\mathrm{a}}+\absa{\mathrm{b}}+1\leq c \eta^{-2}\,.
\ee
To get a lower bound for $\absa{1-\mathrm{a}\mathrm{b}}$, one takes the imaginary part of the first equation in \eqref{eq: scelimit3}, then divide both sides by $\eta-\im\fw_1$ to see
\ben
	\int_ \R \frac{\hat{\mu}_2(\dd x)}{\absa{x-z+\fw_1}^2} = \frac{-\im\fw_2-\im r_1}{\eta-\im\fw_1}\,.
\ee
Taking the imaginary part of the second equation in \eqref{eq: scelimit3} yields $$\int_ \R \frac{\hat{\mu}_1(\dd x)}{\absa{x-z+\fw_2}^2} = \frac{-\im\fw_1-\im r_2}{\eta-\im\fw_2}.$$ Similarly taking the imaginary part of both equations in \eqref{eq: scelimit2}, we have
\ben
	\int_ \R \frac{\hat{\mu}_2(\dd x)}{\absa{x-z+w_1}^2} = \frac{-\im w_2}{\eta-\im w_1}\,,\quad \int_ \R \frac{\hat{\mu}_1(\dd x)}{\absa{x-z+w_2}^2} = \frac{-\im w_1}{\eta-\im w_2}\,.
\ee
Hence we have a lower bound for $\abs{1-\mathrm{ab}}$,
\ben
	\abs{1-\mathrm{ab}}\geq 1- \sqrt{\frac{-\im w_1}{\eta-\im w_1}\frac{-\im w_2}{\eta-\im w_2} \frac{-\im \fw_1}{\eta-\im\fw_1}\frac{-\im\fw_2}{\eta-\im\fw_2}}\geq \eta^p\,.
\ee
Hence we have the following estimate for some new universal constant $p$,
\ben
	 \frac{\absa{\mathrm{b}}+\absa{\mathrm{a}}+1}{\absa{1-\mathrm{a}\mathrm{b}}}\leq \eta^{-p}\,.
\ee
This estimate with inequality \eqref{ineq: w1-fw1} and upper bounds \eqref{ineq: r1r2} yields,
\ben
	\abs{w_1-\fw_1}\vee \abs{w_2-\fw_2}\leq N^{-\frac{1}{p}}\eta^{-p}\,.
\ee
The conclusion follows from Proposition \ref{prop: w continuity} and letting $\eta\geq N^{-1/ p^2}$.
\end{proof}

\begin{cor}\label{cor: fm bd}
	Under the same assumptions as Theorem \ref{thm: w bound}, we have
	\ben
	\absa{\fm(z)}\lesssim 1\,.
	\ee
\end{cor}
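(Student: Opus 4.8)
The plan is to deduce the bound on $\fm$ from the corresponding bound on the solution $m$ of the limiting system \eqref{eq: scelimit}, which in turn will follow from the hypothesis in Section~\ref{subsec: ass} that one of $\mu_1,\mu_2$ has a bounded Stieltjes transform. One cannot hope to bound $\fm$ directly: $\fm$ equals the Stieltjes transform of the discrete measure $\frac1N\sum_i\delta_{\bar Y_i}$ evaluated at $z-\fw_1$, and $\im(z-\fw_1)$ may be as small as $\eta\gtrsim N^{-1/p^2}$, so a crude estimate only gives $|\fm|\lesssim\eta^{-1}$; it is the comparison with $m$ that upgrades this to $O(1)$.

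First I would note that $|m(z)|\lesssim 1$ throughout the domain. By Proposition~\ref{prop: w continuity} we have $w_1,w_2\in\C^-$, hence $z-w_1,z-w_2\in\C^+$; then the first two lines of \eqref{eq: scelimit} read $m=m_2(z-w_1)=m_1(z-w_2)$, so whichever of $\mu_1$, $\mu_2$ has a bounded Stieltjes transform immediately gives $|m(z)|\le\sup_{\zeta\in\C^+}|m_j(\zeta)|\lesssim 1$.

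Next I would control $|\fm-m|$. Using the first lines of \eqref{eq: sce4} and \eqref{eq: scelimit}, write $\fm-m=\bigl(\fm_2(z-\fw_1)-m_2(z-\fw_1)\bigr)+\bigl(m_2(z-\fw_1)-m_2(z-w_1)\bigr)$. Since $|\fw_1|\lesssim 1$ and $\im(z-\fw_1)\ge\eta\ge N^{-1/p^2}$ by Theorem~\ref{thm: w bound}, Proposition~\ref{prop: m1-fm1} (after enlarging $p$ if necessary) bounds the first bracket by $N^{-1/p}$. For the second bracket I would use $|m_2'(\zeta)|\le(\im\zeta)^{-2}$ and integrate along the segment joining $z-\fw_1$ to $z-w_1$, whose imaginary part stays $\ge\eta$ because $\im\fw_1,\im w_1\le 0$; combined with $|\fw_1-w_1|\le N^{-1/p}\eta^{-p}$ from Theorem~\ref{thm: w bound}, this yields a bound $N^{-1/p}\eta^{-p-2}$. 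Hence $|\fm-m|\lesssim N^{-1/p}\eta^{-p-2}$, which on $\eta\ge N^{-1/p^2}$ is at most $N^{-1/p+(p+2)/p^2}$; enlarging $p$ once more makes the exponent negative, so $|\fm-m|\le N^{-c}$, and therefore $|\fm(z)|\le|m(z)|+|\fm(z)-m(z)|\lesssim 1$.

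There is no serious difficulty here; the only points requiring a little care are to route the estimate through the bounded--Stieltjes-transform hypothesis rather than attempting a direct bound on $\fm$, and to observe that the interpolating segment between $z-\fw_1$ and $z-w_1$ remains in $\C^+$ with imaginary part at least $\eta$, which is immediate from the signs of $\im\fw_1$ and $\im w_1$.
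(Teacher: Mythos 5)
There is a genuine gap in your second bracket. You bound $\absa{m_2(z-\fw_1)-m_2(z-w_1)}\le \absa{\fw_1-w_1}\,\eta^{-2}\le N^{-1/p}\eta^{-p-2}$, and then claim that on $\eta\ge N^{-1/p^2}$ this is $N^{-1/p+(p+2)/p^2}$ with a negative exponent after "enlarging $p$". But $-1/p+(p+2)/p^2=2/p^2>0$ for every $p$, so the exponent is never negative; at the bottom of the domain $\eta\asymp N^{-1/p^2}$ your bound is $N^{2/p^2}$, which diverges with $N$. Enlarging $p$ is not available anyway: $p$ is the fixed constant produced by Theorem \ref{thm: w bound} and Proposition \ref{prop: m1-fm1}, and increasing it only weakens $N^{-1/p}$ and $\eta^{-p}$ while shrinking nothing in your favour. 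The structural problem is that the stability estimate $\absa{w_1-\fw_1}\le N^{-1/p}\eta^{-p}$ is itself only $O(1)$ near $\eta= N^{-1/p^2}$, so any argument that needs $\fw_1$ to approximate $w_1$ to precision better than $\eta^{2}$ cannot cover the full domain on which the corollary is asserted; your proof only yields $\absa{\fm}\lesssim 1$ on the strictly smaller region $\eta\gtrsim N^{-1/(p(p+2))}$.

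The repair is to not compare $\fm$ with $m$ at all, which is what the paper does. By \eqref{eq: sce4}, $\fm(z)=\fm_2(z-\fw_1)=\fm_1(z-\fw_2)$, and since $\im\fw_1,\im\fw_2\le 0$ the points $z-\fw_1,z-\fw_2$ lie in $\C^+$ with imaginary part at least $\eta\ge N^{-1/p^2}\ge N^{-1/p}$; Proposition \ref{prop: m1-fm1} applied at these points gives $\absa{\fm_j-m_j}\le N^{-1/p}$ there, and whichever of $m_1,m_2$ is bounded (say by $\alpha$) under the standing assumption yields $\absa{\fm(z)}\le \alpha+N^{-1/p}\lesssim 1$ directly — no bound on $m(z)$, no use of $w_j$, and no stability estimate are needed. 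Equivalently, within your own decomposition: if $\mu_2$ is the bounded measure the second bracket is trivially at most $2\alpha$ by boundedness (no smallness required), and if instead $\mu_1$ is bounded you must route through the other line of the system, $\fm=\fm_1(z-\fw_2)$; either way the argument collapses to the paper's two-line proof, and the detour through $m(z)$ and the segment estimate is exactly what introduces the unfixable loss.
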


\begin{proof}
	Recall that in Section \ref{subsec: ass} we assumed that either $\mu_1$ or $\mu_2$ has a bounded Stieltjes transform. Let the bound be $\alpha>0$. By Proposition \ref{prop: m1-fm1} we know for $\eta\geq N^{-\frac{1}{p}}$,
	\ben
		\absa{\fm_1(z-\fw_2)}\wedge\absa{\fm_2(z-\fw_1)}\leq \absa{m_1(z-\fw_2)}\wedge\absa{m_2(z-\fw_1)}+N^{-\frac{1}{p}}\leq  2\alpha\,.
	\ee
	By \eqref{eq: sce4}, we see that this actually means
	\ben
		\absa{\fm(z)}\leq 2\alpha\,.
	\ee
\end{proof}

In the bulk of the spectrum, the stability becomes better, in the sense that the imaginary part of $z$ can be as small as possible, as is shown by the following theorem.
\begin{thm}\label{thm: w1-fw1 bulk}
	Let $I$ be an interval on which the probability measure $\mu=\mu_1\boxplus \mu_2$ has a density bounded away from $0$ and bounded above. Let $\Sigma=\{ z=E+\ii \eta: E \in I, \eta\in[0,1]\}$.  Then, there is a constant $q>0$ such that
	\ben
		\sup_{z\in \Sigma} \absa{w_1-\fw_1}\vee \absa{w_2-\fw_2} \leq  N^{-\frac{1}{q}}\,,
	\ee
	\ben 
		\sup_{z\in \Sigma} \absa{m(z)-\fm(z)}\leq  N^{-\frac{1}{q}}\,.
	\ee
\end{thm}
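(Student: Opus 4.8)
The plan is to rerun the stability argument from the proof of Theorem \ref{thm: w bound}, replacing its Cauchy--Schwarz estimate on the stability operator — which only yields $|1-\mathrm a\mathrm b|\gtrsim\eta^{p}$ and hence degenerates on the real axis — by the much stronger \emph{bulk} stability of the subordination system, so that no negative power of $\eta$ is lost. First I would reproduce verbatim the reduction used for Theorem \ref{thm: w bound}: rewrite the limiting system \eqref{eq: scelimit} and the discrete system \eqref{eq: sce4} in the form \eqref{eq: scelimit2}--\eqref{eq: scelimit3} via the auxiliary Stieltjes transforms $\hat m_1,\hat m_2$ of the measures $\hat\mu_1,\hat\mu_2$, subtract them, and arrive at the linear relation for $(w_1-\fw_1,w_2-\fw_2)$ with coefficients $\mathrm a,\mathrm b$ given by \eqref{def: ab} and source terms $r_1,r_2$ given by \eqref{def: r1r2}, leading to the bound \eqref{ineq: w1-fw1}. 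Everything then reduces to controlling $|\mathrm a|+|\mathrm b|+1$, $|1-\mathrm a\mathrm b|^{-1}$ and $|r_1|+|r_2|$ uniformly on $\Sigma$, and the claim is that in the bulk these are $O(1)$, $O(1)$ and $O(N^{-1/p})$ respectively, with no $\eta$-loss.

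The key new input is the bulk stability of the limiting equations: by the regularity theory of free additive convolution (Proposition \ref{prop: w continuity} and its sources \cite{Be06,Be08,BB07}; see also \cite{BES16}), on an interval $I$ where $\mu=\mu_1\boxplus\mu_2$ has a density bounded away from $0$ and $\infty$, the limiting subordination functions satisfy $\Im w_j(z)\le-c<0$ uniformly for $z\in\Sigma$, and the limiting stability operator $1-\mathrm a_0\mathrm b_0$, with $\mathrm a_0:=\int\hat\mu_2(\dd x)/(x-z+w_1)^2$ and $\mathrm b_0:=\int\hat\mu_1(\dd x)/(x-z+w_2)^2$, has modulus bounded below uniformly on $\Sigma$. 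Granting this I would argue by a bootstrap: on any part of $\Sigma$ where it is already known that $|w_j-\fw_j|=o(1)$, one has $\Im\fw_j\le-c/2$ there, the mixed quantities $\mathrm a,\mathrm b$ differ from $\mathrm a_0,\mathrm b_0$ by $O(|w_1-\fw_1|+|w_2-\fw_2|)=o(1)$, hence $|1-\mathrm a\mathrm b|\ge c/2$ and $|\mathrm a|+|\mathrm b|\lesssim1$; moreover $\Im\fw_j\le-c/2$ forces the arguments $z-\fw_j$ of the Stieltjes transforms appearing in $r_1,r_2$ to have imaginary part $\ge c/2\ge N^{-1/p}$, so Proposition \ref{prop: m1-fm1} gives $|m_i(z-\fw_j)-\fm_i(z-\fw_j)|\le N^{-1/p}$ while the denominators $|m_i(z-\fw_j)|,|\fm_i(z-\fw_j)|$ stay $\ge c$; plugging into \eqref{def: r1r2} yields $|r_1|+|r_2|\le CN^{-1/p}$ with no power of $\eta$, and \eqref{ineq: w1-fw1} then upgrades the a priori bound to $|w_1-\fw_1|\vee|w_2-\fw_2|\le CN^{-1/p}$.

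To promote this self-improvement to all of $\Sigma$ I would use a connectedness argument. Both $w_j$ and $\fw_j$ extend continuously to the closed region $\Sigma$: the former by Proposition \ref{prop: w continuity}, the latter because for $N$ large $\mu_{1,N}\boxplus\mu_{2,N}$ also has a density bounded away from $0$ and $\infty$ on $I$ (same regularity theory together with $\mu_{i,N}\to\mu_i$ weakly), so $z\mapsto(w_1-\fw_1,w_2-\fw_2)$ is continuous on $\Sigma$. Fix $q$ somewhat larger than $p$ (say $q=3p$) and set $A:=\{z\in\Sigma:|w_1-\fw_1|\vee|w_2-\fw_2|\le N^{-1/q}\}$. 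Then $A$ is closed; it is nonempty because Theorem \ref{thm: w bound} gives $|w_1-\fw_1|\vee|w_2-\fw_2|\le N^{-1/p}\eta^{-p}\le N^{-1/(2p)}$ for $\eta\ge N^{-1/(2p^2)}$; and the previous paragraph, applied at each $z\in A$, gives $|w_1-\fw_1|\vee|w_2-\fw_2|\le CN^{-1/p}<\tfrac12N^{-1/q}$ there, so by continuity $A$ is also open in $\Sigma$. Since $\Sigma$ is connected, $A=\Sigma$, which is the first bound. The bound on $m-\fm$ then follows from $m=m_2(z-w_1)$ and $\fm=\fm_2(z-\fw_1)$: in the bulk $z-w_1$ stays at distance $\gtrsim1$ from $\supp\mu_2$, so $m_2$ is Lipschitz there and $|m_2(z-w_1)-m_2(z-\fw_1)|\lesssim|w_1-\fw_1|$, while $|m_2(z-\fw_1)-\fm_2(z-\fw_1)|\le N^{-1/p}$ by Proposition \ref{prop: m1-fm1}; adjusting $q$ finishes the argument.

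The hard part will be the bulk stability estimate $|1-\mathrm a_0\mathrm b_0|\gtrsim1$ valid all the way down to $\eta=0$: the Cauchy--Schwarz bound used for Theorem \ref{thm: w bound} collapses on the real axis, and the non-vanishing of the stability operator there genuinely requires the positivity and boundedness of the free-convolution density together with the attendant regularity (boundedness of the derivatives) of the subordination functions; everything else is a routine perturbation-plus-continuity argument.
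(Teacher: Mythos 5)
Your overall architecture is the same as the paper's: reduce to the linearized system and the bound \eqref{ineq: w1-fw1}, show that in the bulk $|r_1|+|r_2|\lesssim N^{-1/p}$, $|\mathrm a|+|\mathrm b|\lesssim 1$ and $|1-\mathrm a\mathrm b|\gtrsim 1$ with no $\eta$-loss, and then propagate the estimate from the region covered by Theorem \ref{thm: w bound} to all of $\Sigma$ by an open--closed (continuity) argument; the treatment of $m-\fm$ at the end is also the same modulo which subordination relation you differentiate. The one place where your write-up is not a proof is exactly the step you flag as ``the hard part'': the uniform lower bound $|1-\mathrm a_0\mathrm b_0|\geq c$ on $\Sigma$ down to $\eta=0$, together with the strict negativity $\Im w_j\leq -c$. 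You defer both to ``regularity theory of free additive convolution'' (Proposition \ref{prop: w continuity}, \cite{Be06,Be08,BB07}), but those results give continuity and boundedness of $w_1,w_2$ up to the real axis, not a quantitative non-degeneracy of the stability operator, and your suggested route via ``boundedness of the derivatives of the subordination functions'' is not what is needed. So, as written, the key estimate is asserted rather than proved.

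The paper closes this gap with a short, self-contained argument that you could adopt. First, $(-\Im w_1)\wedge(-\Im w_2)\geq c_I$ on $\Sigma$ is not taken from the literature but derived in two lines from the hypothesis $\Im m\in[c_I,c_I^{-1}]$: the subordination relations \eqref{eq: scelimit2} give $-\Im w_2\geq -c_I\,\Im w_1$ and $-\Im w_1\geq -c_I\,\Im w_2$, while the third equation of \eqref{eq: scelimit} gives $-\Im w_1-\Im w_2\geq c_I$. Second, for the stability operator one compares $\mathrm a,\mathrm b$ of \eqref{def: ab} to the purely limiting quantities $\mathfrak a=\int\hat\mu_2(\dd x)/(x-z+w_1)^2$ and $\mathfrak b=\int\hat\mu_1(\dd x)/(x-z+w_2)^2$ (a perturbation of size $N^{-1/p}$ on the bootstrap set), and then bounds $|\mathfrak a\mathfrak b|$ by Cauchy--Schwarz against $\int\hat\mu_2/|x-z+w_1|^2\cdot\int\hat\mu_1/|x-z+w_2|^2\leq \frac{-\Im w_1}{\eta-\Im w_1}\cdot\frac{-\Im w_2}{\eta-\Im w_2}\leq 1$; the point is that this Cauchy--Schwarz is \emph{strict} even at $\eta=0$, because $\Im w_j\neq 0$ makes the phase of $(x-z+w_j)^2$ non-constant and $\hat\mu_1,\hat\mu_2$ are not point masses, and continuity of $\mathfrak a,\mathfrak b$ on the compact set $\overline\Sigma$ upgrades strict inequality to a uniform gap $|1-\mathfrak a\mathfrak b|\geq c_I>0$, hence $|1-\mathrm a\mathrm b|\geq c_I/2$. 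With this supplied, the rest of your bootstrap and connectedness argument goes through exactly as in the paper (your Lipschitz step for $m_2$ should be justified by $\Im(z-\fw_1)\geq c/2$ rather than ``distance $\gtrsim 1$ from $\supp\mu_2$'', but that is cosmetic).
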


\begin{proof}
	First, we proceed exactly as in the proof of Theorem \ref{thm: w bound}, up to \eqref{ineq: w1-fw1}.  In the rest of this proof, the $I$-dependent constant $c_I$ may change from line to line, but only changes for finitely many times. Let $q>0$ be a positive constant to be chosen. Define a subset $\Sigma_1$ of $\Sigma$ by
	\ben
		\Sigma_1 = \{z\in\Sigma: \absa{w_1-\fw_1}\vee \absa{w_2-\fw_2} \leq  N^{-\frac{1}{q}}\}\,.
	\ee
	The set $\Sigma_1$ is not empty, thanks to Theorem \ref{thm: w bound}. It is a closed subset of $\Sigma$, because all the functions $w_1$, $\fw_1$, $w_2$ and $\fw_2$ are continuous in $\Sigma$.  We shall show that $\Sigma_1$ is an open subset of $\Sigma$, which would imply $\Sigma_1=\Sigma$.
	
	Take any point $z=E+\ii \eta\in\Sigma_1$. According to the assumption that $\mu=\mu_1\boxplus \mu_2$ has a density bounded above and bounded away from zero, we have $\im m \in [c_I,c_I^{-1}]$. According to Proposition \ref{prop: w continuity}, the Stieltjes transform $m$ of $\mu$ can be analytically extended to $I$, which implies that $\absa{m}<c_I$.  From \eqref{eq: scelimit2} we know
	\ben
		-\im w_2 \geq -c_I \im w_1\,,\quad -\im w_1 \geq -c_I \im w_2\,.
	\ee
	Taking the imaginary part of the third equation of  \eqref{eq: scelimit} we obtain 
	\ben
		-\im w_1-\im w_2 \geq c_I>0\,.
	\ee
	The above three inequalities give
	\ben
	(	-\im w_1 )\wedge(-\im w_2)\geq c_I>0\,.
	\ee
	Therefore, on the set $\Sigma_1$ we have
	\ben
		(-\im\fw_1)\wedge(-\im\fw_2)\geq c_I>0\,,\quad \absa{\fw_1}\vee \absa{\fw_2}\leq c_I^{-1}\,.
	\ee
	Substituting $z$ in Proposition \ref{prop: m1-fm1} with $z-\fw_2$ and $z-\fw_1$, we see that 
	\be\label{ineq: m-fm}
		\absa{m_1(z-\fw_2)-\fm_1(z-\fw_2)}\vee \absa{m_2(z-\fw_1)-\fm_2(z-\fw_1)}\leq 2 N^{-\frac{1}{p}}\,.
	\ee
	Recalling the definition of $r_1$ and $r_2$ (see \eqref{def: r1r2}), the above estimate yields
	\be\label{ineq: r1r2 1}
		\absa{r_1}\vee \absa{r_2}\leq c_I N^{-\frac{1}{p}}\,.
	\ee
	In order to make use of the bound \eqref{ineq: w1-fw1}, we need an estimate on $\frac{\absa{\mathrm{b}}+\absa{\mathrm{a}}+1}{\absa{1-\mathrm{ab}}}$.  By definition \eqref{def: ab} of $\mathrm{a}$ and $\mathrm{b}$ and the lower bounds on $-\im\fw_1$ and $-\im\fw_2$, we easily get
	\be\label{ineq: a b}
		\absa{a}\vee \absa{b}\leq c_I>0\,,
	\ee
	which is enough for bounding the numerator of $\frac{\absa{\mathrm{b}}+\absa{\mathrm{a}}+1}{\absa{1-\mathrm{ab}}}$.  To bound the denominator, we consider 
	\ben
		\mathfrak{a}:=\int_ \R \frac{\hat{\mu}_2(\dd x)}{(x-z+w_1)^2} \,,\quad \mathfrak{b}:=\int_ \R \frac{\hat{\mu}_2(\dd x)}{(x-z+w_2)^2}\,.
	\ee
	Note that by assumption, on $\Sigma_1$ we have
	\be\label{ineq: fa-a}
		\absa{\mathfrak{a}-\mathrm{a}}\vee \absa{\mathfrak{b}-\mathrm{b}}\leq c_I N^{-\frac{1}{p}}\,.
	\ee
	Taking the imaginary part of the first and second equations in \eqref{eq: scelimit2}, we have
	\ben\left\{
		\begin{split}
		-\im w_2 &= (-\im w_1 +\eta) \int_ \R \frac{\hat{\mu}_2(\dd x)}{\absa{x-z+w_1}^2}\\
		-\im w_1 & = (-\im w_2+\eta)\int_ \R \frac{\hat{\mu}_1(\dd x)}{\absa{x-z+w_2}^2}
		\end{split}\right.\,,
	\ee
	which yields,
	\ben
		 \int_ \R \frac{\hat{\mu}_2(\dd x)}{\absa{x-z+w_1}^2}\int_ \R \frac{\hat{\mu}_1(\dd x)}{\absa{x-z+w_2}^2}\leq \frac{-\im w_2}{-\im w_2+\eta} \frac{-\im w_1}{-\im w_1+\eta}\leq 1\,.
	\ee
	The estimate holds on  $\Sigma$ as well as on $I$. Since $w_1$ and $w_2$ has non-zero imaginary part, and that  $\hat{\mu}_2$ is not a point-mass, we have a strict inequality for $\mathfrak{a}$ and $\mathfrak{b}$,
	\ben
		\absa{\mathfrak{ab}}<\int_ \R \frac{\hat{\mu}_2(\dd x)}{\absa{x-z+w_1}^2}\int_ \R \frac{\hat{\mu}_1(\dd x)}{\absa{x-z+w_2}^2}\leq 1\,.
	\ee
	By the continuity of $\mathfrak{a}$ and $\mathfrak{b}$ on $I$, we have
	\ben
		\absa{1-\mathfrak{ab}}\geq c_I>0\,.
	\ee
	This estimate with \eqref{ineq: fa-a} gives a lower bound for the numerator of $\frac{\absa{\mathrm{b}}+\absa{\mathrm{a}}+1}{\absa{1-\mathrm{ab}}}$,
	\ben
		\absa{1-\mathrm{ab}}\geq c_I>0\,.
	\ee
	We plug this estimate and \eqref{ineq: r1r2 1} and \eqref{ineq: a b} into \eqref{ineq: w1-fw1} to get, for any $z\in \Sigma_1$, 
	\ben
		\absa{w_1-\fw_1}\vee \absa{w_2-\fw_2} \leq c_I N^{-\frac{1}{p}}\,.
	\ee
	Therefore, for any $z\in\Sigma_1$, there is a neighborhood of $z$ such that
	\ben
		\absa{w_1-\fw_1}\vee \absa{w_2-\fw_2} \leq N^{-\frac{1}{p}}\,,
	\ee
	which implies that $\Sigma_1$ is an open subset of $\Sigma$, as long as $q>p$.  Therefore, $\Sigma_1$ is an open and closed non-empty subset of $\Sigma$, so $\Sigma_1=\Sigma$. 
	
	We already obtained \eqref{ineq: m-fm} on $\Sigma_1$. Therefore,\eqref{ineq: m-fm} holds on the entire $\Sigma$. By equations  \eqref{eq: sce4} and \eqref{eq: scelimit}, it follows that on $\Sigma$ we have
	\ben	
		\absa{m(z)-\fm(z)}\leq 2N^{-\frac{1}{10}} + \absa{m_1(z-\fw_2)-m_1(z-w_2)}\leq N^{-\frac{1}{q}}\,.
	\ee
\end{proof}

\subsection{Global upper bounds for $w_{\alpha k}$ and $\ga_{ij}$}

We now derive an estimate which holds for all the coefficients appearing in \eqref{eq: dla}.   In the next subsection we derive stronger estimates which hold for terms corresponding to the bulk.

\begin{thm}\label{thm: edge deloc}
There is a universal constant $p>0$ such that the following holds. With probability $1-\e^{-N^{9\mathfrak{b}}}$, we have
\ben
	\sup_{t\in[0,T]} \sup_{1\leq \alpha,k\leq N}\abs{w_{\alpha k}}^2 \leq N^{-1/p}\,,\quad
\sup_{t\in[0,T]} \sup_{1\leq i<j\leq N}\ga_{ij}\leq N^{-2/p+\mathfrak{a}}\,.
\ee
\end{thm}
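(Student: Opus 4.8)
The plan is to derive the bound on $w_{\alpha k}$ from eigenvector delocalization for $\cH(t)$, with the local law \eqref{ineq: cGii} as input, and then to read off the bound on $\ga_{ij}$ by summing over the band $\{|\alpha-\beta|<N^{\mathfrak{a}}\}$. Recall that $(w_{\alpha k})_\alpha$ is the $\ell^2$-normalized $k$-th eigenvector of $\cH(t)$, with eigenvalue $\la_k=\la_k(t)$. I would start from the elementary inequality, valid for every $\eta>0$ and every $i$,
\[
|w_{ik}|^2\ \le\ \sum_{l}\frac{\eta^2\,|w_{il}|^2}{(\la_l-\la_k)^2+\eta^2}\ =\ \eta\,\im\cG_{ii}(\la_k+\ii\eta,t)\,,
\]
the first bound because the $l=k$ term already equals $|w_{ik}|^2$. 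On the good event of Proposition \ref{prop: tdX} and Theorem \ref{thm: Ut-I} one has $\|\cH(t)\|\le 2K+2$, so with $\eta=N^{-1/40}$ the point $\la_k+\ii\eta$ lies in the domain where the local law applies (Theorem \ref{thm: edge law} being valid for $|z|\le 4K$). Inserting \eqref{ineq: cGii} with a small $\nu>0$ gives, on an event of probability $\ge 1-\e^{-N^{9\mathfrak{b}}}$ and uniformly in $i,k$ and $t\in[0,T]$,
\[
|w_{ik}|^2\ \le\ \eta\,\im\frac{1}{\bar Y_i(t)-z+\mathfrak{w}_1(z)}\ +\ \eta\cdot\frac{N^\nu}{\sqrt{N}\,\eta^{15}}\ \le\ \eta\,\im\frac{1}{\bar Y_i(t)-z+\mathfrak{w}_1(z)}\ +\ N^{\nu-3/20}\,,\qquad z=\la_k+\ii\eta\,.
\]

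Everything thus reduces to the deterministic estimate
\[
\sup_{1\le i\le N}\ \im\frac{1}{\bar Y_i(t)-z+\mathfrak{w}_1(z)}\ \lesssim\ 1\,,
\]
uniformly over $t\in[0,T]$ and over $z=E+\ii\eta$ with $E\in[-4K,4K]$ and $\eta=N^{-1/40}$, which then yields $|w_{ik}|^2\le N^{-1/p}$ with $p$ essentially $40$ (up to the constants in \eqref{ineq: cGii}). Writing $\zeta:=z-\mathfrak{w}_1(z)$, so $\im\zeta=\eta+|\im\mathfrak{w}_1|$, one has
\[
\im\frac{1}{\bar Y_i-\zeta}\ =\ \frac{\im\zeta}{(\bar Y_i-\re\zeta)^2+(\im\zeta)^2}\ \le\ \min\Big(\ \frac{1}{|\im\mathfrak{w}_1|}\ ,\ \frac{\im\zeta}{(\bar Y_i-\re\zeta)^2}\ \Big)\,,
\]
and I would prove the bound by splitting into two regimes for $E$. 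When $E$ is in the bulk of $\mu=\mu_1\boxplus\mu_2$, Theorem \ref{thm: w1-fw1 bulk}, Proposition \ref{prop: w continuity} and Corollary \ref{cor: fm bd} give $\im\fm\gtrsim 1$ and $|\fm|\lesssim 1$; substituting into the self-consistent system \eqref{eq: sce4}, and using non-degeneracy of $\mu_1$ (so that $\hat{\mu}_1\neq 0$ in the notation of the proof of Theorem \ref{thm: edge law}), forces $|\im\mathfrak{w}_1|\gtrsim 1$, so the first term of the minimum is $O(1)$. When $E$ is near or outside a spectral edge of $\mu$, where $\im\mathfrak{w}_1$ may degenerate, one instead shows that $\zeta=z-\mathfrak{w}_1$ is separated from $\supp\mu_2$ by a distance $\gtrsim 1$; then, since $|\bar Y_j(t)-y_j^\star|=o(N^{-9/10})$ by \eqref{reg barY}, we get $|\bar Y_i-\re\zeta|\gtrsim 1$ for every $i$, so the second term of the minimum is $O(\im\zeta)=O(1)$. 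This separation is where assumption \eqref{edge behavior} is used: it forbids the edges of $\mu_2$ from being too thin, and, together with Proposition \ref{prop: w continuity} and the stability estimates of Theorems \ref{thm: w bound} and \ref{thm: w1-fw1 bulk} (which transfer the statement from the limiting $(m,w_1)$ to the $N$-dependent $(\fm,\mathfrak{w}_1)$), it guarantees that $z-\mathfrak{w}_1(z)$ stays at distance $\gtrsim 1$ from $\supp\mu_2$ near the edges of $\mu$, uniformly in $t$ (uniformity being automatic since $\bar Y(t)=Y+(T-t)\hat Y$ approximates the quantiles $y^\star$ to precision $o(N^{-9/10})$ for every $t$).

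Granting this, combining the two displays and choosing $\nu$ small gives $\sup_{t\in[0,T]}\sup_{i,k}|w_{ik}|^2\le N^{-1/p}$ for a suitable $p=p(\mathfrak{c},\mathfrak{a},\mathfrak{b})$, which is the first assertion. The second follows at once: since $\sum_\alpha|w_{\alpha i}|^2=1$,
\[
\ga_{ij}\ =\ \sum_{|\alpha-\beta|<N^{\mathfrak{a}}}|w_{\alpha i}|^2\,|w_{\beta j}|^2\ \le\ \Big(\sup_\beta|w_{\beta j}|^2\Big)\sum_\alpha|w_{\alpha i}|^2\,\#\{\beta:|\alpha-\beta|<N^{\mathfrak{a}}\}\ \le\ 2N^{\mathfrak{a}}\,\sup_{\beta,j}|w_{\beta j}|^2\,,
\]
so $\sup_t\sup_{i<j}\ga_{ij}\le N^{-2/p+\mathfrak{a}}$ after relabeling $p$. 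The high-probability statement over all of $[0,T]$ and all relevant $z$ is obtained in the usual way, by applying \eqref{ineq: cGii} on a fine net in $(z,t)$, using the Hölder continuity of $\cG_{ii}$ supplied by Theorem \ref{thm: Us-Ut}, and intersecting with the events of Theorem \ref{thm: Ut-I} and the event on which \eqref{ineq: cGii} holds. The main obstacle I anticipate is precisely the deterministic separation step near the edges of $\mu_1\boxplus\mu_2$: showing, with only \eqref{edge behavior} at hand, that $\im\mathfrak{w}_1$ and $\mathrm{dist}\big(\re(z-\mathfrak{w}_1),\supp\mu_2\big)$ cannot both be small at the same $z$.
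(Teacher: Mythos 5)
Your reduction to the Green's function is the same as the paper's: $|w_{\alpha k}|^2\le \eta\,\im\cG_{\alpha\alpha}(\la_k+\ii\eta)$, the local law \eqref{ineq: cGii} at $\eta=N^{-1/40}$, and the trivial band-counting step for $\ga_{ij}$ (your relabeling of $p$ there is fine). The problem is the deterministic step you yourself flag as the "main obstacle": you need a lower bound on $\absa{-z_k+\bar Y_\alpha+\mathfrak{w}_1}$, and you propose to get it from a dichotomy at scale one, namely either $-\im\mathfrak{w}_1\gtrsim 1$ (bulk) or $\mathrm{dist}\bke{\re(z-\mathfrak{w}_1),\supp\mu_2}\gtrsim 1$ (near and beyond the edges of $\mu_1\boxplus\mu_2$). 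The bulk half is fine (it is essentially \eqref{eqn:w1stab}), but the edge half is asserted, not proven, and it is both stronger than what is needed and doubtful in general: the subordination point $z-\mathfrak{w}_1(z)$ need not stay at macroscopic distance from $\supp\mu_2$ when the edge of $\mu$ is generated by an edge of $\mu_2$, and nothing in \eqref{edge behavior}, Proposition \ref{prop: w continuity} or Theorems \ref{thm: w bound}, \ref{thm: w1-fw1 bulk} yields such an order-one separation. Since this is exactly the claim carrying the whole estimate outside the bulk (and your target bound $\im(\bar Y_i-z+\mathfrak{w}_1)^{-1}\lesssim 1$, i.e.\ $|w_{\alpha k}|^2\lesssim\eta$, is stronger than the theorem requires), the proposal has a genuine gap.

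The paper's proof settles for a much weaker dichotomy, at scale $\eta^{1-\mathfrak{c}/4}$ rather than scale one, and proves it by contradiction: if both $\absa{-\la_k+\bar Y_\alpha+\re\mathfrak{w}_1}$ and $-\im\mathfrak{w}_1$ were below $\eta^{1-\mathfrak{c}/4}$, then the regularity of the quantiles \eqref{reg barY} together with the mass lower bound \eqref{edge behavior} gives at least $cN\eta^{2-\mathfrak{c}}$ indices $\beta$ with $\absa{\bar Y_\beta-\bar Y_\alpha}\le\eta$, hence $\im\mathfrak{m}_2(z_k-\mathfrak{w}_1)\gtrsim\eta^{1-\mathfrak{c}/2}$; feeding this through the self-consistent system \eqref{eq: sce5}--\eqref{eq: sce6}, with Corollary \ref{cor: fm bd} to control $\absa{\mathfrak{m}_2}$ and Theorem \ref{thm: w bound} to keep $\mathfrak{w}_2$ bounded, forces $-\im\mathfrak{w}_1\gtrsim\eta^{1-\mathfrak{c}/3}$, a contradiction. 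This only yields $|w_{\alpha k}|^2\le\eta^{\mathfrak{c}/4}+\mathcal{O}(N^{\nu-1/2}\eta^{-15})$, but that is all the theorem claims ($N^{-1/p}$ for some $p$). To repair your argument you would either have to prove your order-one separation under the stated assumptions (which I do not believe you can), or replace it by a quantitative "either the real part or $-\im\mathfrak{w}_1$ exceeds a small power of $\eta$" statement of the above type, which is precisely the paper's contradiction argument using \eqref{edge behavior}.
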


\begin{proof}
	Fix $\eta=N^{-1/p}$ and  $1\leq \alpha,k\leq N$, where $p>0$ is a large constant to be chosen. First, we bound $\abs{w_{\alpha k}}^2$ in terms of $G_{\alpha\alpha}$. For each $(z=E+\ii\eta,t)\in\Sigma\times [0,T]$, note that $\im\cG_{\alpha\alpha}(E+\ii \eta) = \sum_k \frac{\eta\abs{w_{\alpha k}}^2}{\abs{\la_k-E}^2+\eta^2}$, therefore, 
	\ben
		\abs{w_{\alpha k}}^2 \leq \eta \im\cG_{\alpha\alpha}(\la_k+\ii \eta) \,.
	\ee
	In the following, we denote $z_k:=\la_k+\ii\eta$. Then, we want to use the estimate \eqref{ineq: cGii} obtained above, which says that $\cG_{\alpha\alpha}(z_k)$ is approximately $\left(-z_k+\bar{Y}_\alpha +\mathfrak{w}_1\right)^{-1}$. Therefore, we need an upper bound on $\absa{-z_k+\bar{Y}_\alpha +\mathfrak{w}_1}^{-1}$.  It is sufficient to get lower bounds on $\absa{-\la_k+\bar{Y}_\alpha+\re\mathfrak{w}_1}\vee(-\im\mathfrak{w}_1)$, because 
	\be\label{wakimG}
		\absa{\frac{1}{-z_k+\bar{Y}_\alpha +\mathfrak{w}_1}}\leq \frac{1}{\abs{-\la_k+\bar{Y}_\alpha+\re\mathfrak{w}_1}\vee(-\im\mathfrak{w}_1)}\,.
	\ee
	
	We claim that 
	\be\label{imw1}
		\absa{-\la_k+\bar{Y}_\alpha+\re\mathfrak{w}_1} \vee(-\im\mathfrak{w}_1)\geq \eta^{1-\mathfrak{c}/4}\,.
	\ee
	We will prove this by contradiction and assume that $\absa{-\la_k+\bar{Y}_\alpha+\re\mathfrak{w}_1} \vee(-\im\mathfrak{w}_1)< \eta^{1-\mathfrak{c}/4}$.  We take the imaginary part of \eqref{eq: sce5} to see
	\be\label{-imw1-imw2}
		-\im\mathfrak{w}_1-\im\mathfrak{w}_2=-\eta+\frac{\im \mathfrak{m}_2(z_k-\mathfrak{w}_1)}{\absa{\mathfrak{m}_2(z_k-\mathfrak{w}_1)}^2}\geq -\eta+c\im\mathfrak{m}_2(z_k-\mathfrak{w}_1)\,.
	\ee
	In the last inequality we have used Corollary \ref{cor: fm bd}.  Note that by definition of $\mathfrak{m}_2$,
	\be\label{Naeta}
		\begin{split}
		\im\mathfrak{m}_2(z_k-\mathfrak{w}_1) &= \frac{1}{N}\sum_{\beta=1}^N \frac{\eta-\im\mathfrak{w}_1}{\absa{-\la_k+\bar{Y}_\beta+\re\mathfrak{w}_1}^2+\absa{\eta-\im\mathfrak{w}_1}^2}\\
		&\geq \frac{1}{N} \sum_{\absa{\bar{Y}_\beta-\bar{Y}_\alpha}\leq \eta}  \frac{\eta}{\absa{-\la_k+\bar{Y}_\beta+\re\mathfrak{w}_1}^2+\absa{\eta-\im\mathfrak{w}_1}^2}\\
		&\geq \frac{N_{\alpha,\eta}}{N}\frac{\eta}{4\eta^{2-\mathfrak{c}/2}} = \frac{N_{\alpha,\eta}\eta^{\mathfrak{c}/2-1}}{4N}
		\end{split}
	\ee
	In the last line above, $N_{\alpha,\eta}:=\# \{\beta:\absa{\bar{Y}_\beta-\bar{Y}_\alpha}\leq \eta\}$.  
	Recalling \eqref{reg barY} we have, 
	\ben
		N_{\alpha,\eta}\geq \# \{\beta:\absa{y_\beta-y_\alpha}\leq 2\eta/3\}\geq N\mu_2([y_\alpha-\eta/2,y_\alpha+\eta/2])\,.
	\ee
	According to \eqref{edge behavior}, we obtain
	\ben	
		N_{\alpha,\eta}\geq cN\eta^{2-\mathfrak{c}}\,.
	\ee
	Therefore, \eqref{Naeta} yields
	\ben
		\im\mathfrak{m}_2(z_k-\mathfrak{w}_1)\geq c\eta^{1-\mathfrak{c}/2}\,.
	\ee
	Going back to \eqref{-imw1-imw2} and absorb the constants by small power of $\eta$, we have
	\ben
		-\im\mathfrak{w}_1-\im\mathfrak{w}_2 \geq 2\eta^{1-\mathfrak{c}/3}\,.
	\ee
	However, we have assumed that $-\im\mathfrak{w}_1<\eta^{1-\mathfrak{c}/4}$. Hence, the above inequality gives
	\ben
		-\im\mathfrak{w}_2\geq \eta^{1-\mathfrak{c}/3}\,.
	\ee
	Then we take the imaginary part of the second equation of \eqref{eq: sce6},
	\ben
		-\im \mathfrak{w}_1 =\int_{\R} \frac{(\eta-\im\mathfrak{w}_2)\hat{\mu}_2(\dd x)}{\absa{x-z_k+\mathfrak{w}_2}^2}\geq \eta^{1-\mathfrak{c}/3} \int_{\R} \frac{\hat{\mu}_2(\dd x)}{\absa{x-z_k+\mathfrak{w}_2}^2}\,.
	\ee
	By Theorem \ref{thm: w bound}, we know that
	\ben
		\int_{\R} \frac{\hat{\mu}_2(\dd x)}{\absa{x-z_k+\mathfrak{w}_2}^2}\geq \int_{\R} \frac{\hat{\mu}_2(\dd x)}{c(\absa{x}^2+1)}\geq c_1\im\hat{\mathfrak{m}}_2(\ii )\,.
	\ee
	By definition of $\hat{\mathfrak{m}}_2$ and Proposition \ref{prop: m1-fm1}, the quantity $\im\hat{\mathfrak{m}}_2(\ii )$ is bounded below by
	\ben
		\im\hat{\mathfrak{m}}_2(\ii )=\im \hat{m}_2(i)+\mathcal{O}(N^{-\frac{1}{p}})\geq c\,,
	\ee
	where $c>0$ is a universal constant. Therefore, we get, for large enough $N$,
	\ben
		-\im \mathfrak{w}_1\geq c\eta^{1-\mathfrak{c}/3}\geq \eta^{1-\mathfrak{c}/4}\,,
	\ee
	which leads to a contradiction. Thus, we have proved the claim \eqref{imw1}.  In view of \eqref{wakimG} and \eqref{ineq: cGii}, we  immediately have
	\ben
		\abs{w_{\alpha k}}^2\leq \eta\im\cG_{\alpha\alpha}(\la_k+\ii \eta) \leq \eta^{\mathfrak{c}/4}+\mathcal{O}\left(N^{\nu-1/2}\eta^{-15}\right)\,.
	\ee
	The bound for $\gamma_{ij}$ follows from its definition and taking $p$ large enough.
\end{proof}

\subsection{Upper bounds for $w_{\alpha k}$ and $\ga_{ij}$ in the bulk}
In this subsection, we prove some estimates on the quantities $(w_{\alpha k})$ and $(\ga_{ij})$ that appear in the coefficients in \eqref{eq: dla} which have indices corresponding to bulk eigenvalues of  $\tilde{H}(t)$. Recall that $(w_{\beta k})_{1\leq \beta\leq N}$ is the $k$-th eigenvector of
\ben 
\cH(t)=\bar{V}(t)^*X\bar{V}(t) + Y + (T-t) U(t)\hat{Y}U(t)^*\,,
\ee
where $\bar{V}(t):= VU(t)^*$. We defined the Green's function by
\ben
\cG(z,t):= (\cH(t)-z)^{-1}\,,\forall z\in\C^+\,.
\ee
The probability distribution of $\bar{V}(t)$ is Haar measure, independent from $U(t)$. Recall that in Section \ref{subsec: u-i} we proved $\norm{U(t)-I}\ll 1$.  We therefore write 
\be\label{eq: cH11}
\cH(t)=\bar{V}(t)X\bar{V}(t)+Y+(T-t)\hat{Y} + (T-t)(U(t)\hat{Y}U(t)^*-\hat{Y})\,.
\ee
In view of \eqref{hatY bound} and Theorem \ref{thm: Ut-I}, the last term above satisfies
\ben
\P\left[\sup_{0\leq t\leq T}\norm{(T-t)(U(t)\hat{Y}U(t)^*-\hat{Y})}\geq N^{-1-8\mathfrak{b}}\right]\leq \e^{-N^{10\mathfrak{b}}}\,.
\ee
For the first three terms in \eqref{eq: cH1}, we will apply Theorem 2.5 of \cite{BES16} to get a bound in the bulk of the spectrum. In sum, we are able to prove Theorem \ref{thm: bulk law} below for $\cG_{kk}(z,t)$ for $z$ near the bulk.  Before stating the theorem, we introduce the following notion of overwhelming probability.
\begin{defn}
	A sequence of events $(\mathcal{A}_N)_{N\geq 0}$ is said to hold with overwhelming probability, if for any $L\geq 0$, we have
	\be
	\P[\mathcal{A}_N] \geq 1- N^{-L}\,,\text{ for } N\geq N(L)\,,
	\ee
	for some $N(L)$ depending only on $L$ and universal constants.	
\end{defn}

\begin{thm}\label{thm: bulk law}
	Let $I$ be an interval such that the measure $\mu=\mu_1\boxplus \mu_2$ restricted to $I$ has a strictly positive density. Denote 
	\be
	\mathcal{D}_{I,\nu}:= \{ z=E+\ii \eta: E\in I,N^{-1+\nu}\leq \eta \leq 1\}\,.
	\ee 
	Let $(\fw_1,\fw_2,\fm)$ be the solution to the system \eqref{eq: sce4}. Then we have 
	\be\label{eqn:w1stab}
		\inf_{z\in\mathcal{D}_{I,\nu}}(-\im \fw_1(z))\geq c\,,
	\ee
	and that for any $\nu>0$, the following holds with overwhelming probability,
	\be
	\sup_{0\leq t\leq T}\sup_{z\in\mathcal{D}_{I,\nu}}\max_{1\leq i\leq N}\absa{\cG_{ii}(z,t)- \frac{1}{-z+y_i+(T-t)\hat{y}_i+\fw_1(z) }}\leq \frac{N^\nu}{\sqrt{N\eta}}\,.
	\ee
\end{thm}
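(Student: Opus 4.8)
The plan is to obtain the bulk local law for $\cG_{ii}(z,t)$ by combining three ingredients: (i) the local law for $\hat\cH(t)=\bar V(t)^* X \bar V(t) + \bar Y(t)$ from the works of Bao--Erd\H{o}s--Schnelli, applied conditionally on $U(t)$; (ii) the stability estimate \eqref{eqn:w1stab}, which transfers the $\eta$-range down to $\eta \geq N^{-1+\nu}$; and (iii) the perturbative comparison between $\cH(t)$ and $\hat\cH(t)$, exactly as in the proof of the corollary after Theorem \ref{thm: edge law}, to remove the $(T-t)(U(t)\hat Y U(t)^*-\hat Y)$ term. I would first establish \eqref{eqn:w1stab}: the argument of Theorem \ref{thm: w1-fw1 bulk} gives that on $\mathcal D_{I,\nu}$ one has $-\im \fw_1 \gtrsim c_I$ since $-\im w_1 \wedge (-\im w_2) \geq c_I$ for the limiting solution on an interval of positive density, and Theorem \ref{thm: w bound}/\ref{thm: w1-fw1 bulk} control $|\fw_1-w_1|$ by $N^{-1/q}$; by continuity and the usual open-closed argument one upgrades this to the whole of $\mathcal D_{I,\nu}$ (not just $\eta\ge N^{-p^{-2}}$). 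One subtlety is that here $\mu_2$ is replaced by the empirical law of the $\bar Y_i$, but \eqref{reg barY} guarantees this stays $o(N^{-9/10})$-close to the $N$-quantiles of $\mu_2$, so all the stability inputs carry over.

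Next, conditionally on $U(t)$ — hence on $\bar Y(t)$, which is a bounded deterministic diagonal matrix whose empirical spectral measure is close to $\mu_2$ and satisfies the spacing bound \eqref{y space} — the matrix $\hat\cH(t) = \bar V(t)^* X \bar V(t) + \bar Y(t)$ is precisely of the form covered by Theorem 2.5 of \cite{BES16} (free addition of a diagonal matrix conjugated by Haar unitary and another diagonal matrix, in the bulk). That theorem yields, with overwhelming probability,
\begin{align*}
\max_{1\le i\le N}\Bigl| \hat\cG_{ii}(z,t) - \frac{1}{-z+\bar Y_i(t)+\fw_1(z)} \Bigr| \le \frac{N^\nu}{\sqrt{N\eta}}
\end{align*}
for $z\in\mathcal D_{I,\nu}$, where $\fw_1$ is the subordination function associated to the pair $(\frac1N\sum\delta_{X_i},\frac1N\sum\delta_{\bar Y_i})$, i.e. the first coordinate of the solution to \eqref{eq: sce4}. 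Here $\bar Y_i(t) = y_i + (T-t)\hat y_i$ by definition \eqref{def: barY}, which is the denominator appearing in the statement. Since this bound holds with overwhelming probability for each fixed $(z,t)$ uniformly over the conditioning $U(t)$, a standard net argument over $\mathcal D_{I,\nu}\times[0,T]$ — using the Lipschitz/H\"older continuity of $\hat\cG$ in $(z,t)$ (with polynomial constant, via Theorem \ref{thm: Us-Ut} and the trivial $\eta^{-2}$ bound in $z$) — promotes it to a uniform-in-$(z,t)$ statement, still with overwhelming probability. I would need to check that the error rate in \cite{BES16} is uniform as $\bar Y(t)$ ranges over the (deterministic, $t$-dependent) family appearing here; this is where the regularity assumptions \eqref{reg y}, \eqref{reg x} and the edge condition \eqref{edge behavior} are used, exactly to put $\bar Y(t)$ in the admissible class of \cite{BES16} with constants uniform in $t\in[0,T]$.

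Finally, I would pass from $\hat\cG$ to $\cG$. Writing $\cH(t) = \hat\cH(t) + (T-t)\bigl(U(t)\hat Y U(t)^*-\hat Y\bigr)$ and using $\|(T-t)(U(t)\hat Y U(t)^*-\hat Y)\| \le N^{-1-8\mathfrak b}$ with probability $1-\e^{-N^{10\mathfrak b}}$ (from \eqref{hatY bound} and Theorem \ref{thm: Ut-I}), the resolvent identity together with the Ward identity $\sum_k|\cG_{ik}|^2 = \im\cG_{ii}/\eta$ gives, on $\mathcal D_{I,\nu}$ with $\eta \ge N^{-1+\nu}$,
\begin{align*}
|\hat\cG_{ii}(z,t) - \cG_{ii}(z,t)| \le \|\cH-\hat\cH\|\,\eta^{-1}\sqrt{\im\hat\cG_{ii}\,\im\cG_{ii}} \le \frac{2 N^{-1-8\mathfrak b}}{\eta^2}\,|\hat\cG_{ii}| \le \frac{N^\nu}{\sqrt{N\eta}},
\end{align*}
exactly as in the corollary following Theorem \ref{thm: edge law}. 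Combining the two displays and adjusting $\nu$ gives the claim. The main obstacle is the second step: verifying that the local law of \cite{BES16} applies with error constants uniform over $t\in[0,T]$ to the family $\hat\cH(t)$ — in particular confirming that $\bar Y(t)$ satisfies the requisite regularity (bounded density / square-root-edge-type behavior / quantile spacing) uniformly in $t$, since $\hat y_i$ is only controlled at the $\log N$ scale by \eqref{hatY bound} while $(T-t)\hat y_i = O(N^{-1+\mathfrak b}\log N)$ is a negligible perturbation of $y_i$; making this quantitative, and confirming the subordination function $\fw_1$ in \cite{BES16} coincides with the one from \eqref{eq: sce4}, is the delicate bookkeeping.
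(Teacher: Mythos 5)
Your proposal follows essentially the same route as the paper's proof: apply the bulk local law of \cite{BES16} to $\hat{\cH}(t)=\bar V(t)^*X\bar V(t)+Y+(T-t)\hat Y$ at each fixed $t$ (the paper invokes Theorem 2.4 there, with the stability bound \eqref{eqn:w1stab} taken as part of that input rather than re-derived through the machinery of Theorem \ref{thm: w1-fw1 bulk}), pass from $\hat{\cG}$ to $\cG$ via the resolvent identity, the Ward identity and $\norm{\cH-\hat{\cH}}\leq N^{-1-8\mathfrak{b}}$ from Theorem \ref{thm: Ut-I}, and then obtain uniformity in $t$ from a grid of mesh $T/N^{100}$ together with the continuity estimate of Theorem \ref{thm: Us-Ut} — exactly the paper's three steps. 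One small correction: in your final display the bound $2N^{-1-8\mathfrak{b}}\eta^{-2}\abs{\hat{\cG}_{ii}}$ is too lossy and does not imply $N^{\nu}/\sqrt{N\eta}$ when $\eta$ is near $N^{-1+\nu}$; the self-consistent absorption you cite (as in the corollary after Theorem \ref{thm: edge law}) yields $\abs{\hat{\cG}_{ii}-\cG_{ii}}\leq 2N^{-8\mathfrak{b}}(N\eta)^{-1}\abs{\hat{\cG}_{ii}}$, i.e.\ only one inverse power of $\eta$, and combined with $\abs{\hat{\cG}_{ii}}\lesssim 1$ in the bulk (a consequence of \eqref{eqn:w1stab}) this does give the claimed error.
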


\begin{proof}
	Denote $\hat{\cH}(t):= \bar{V}(t)^* X\bar{V}(t)+Y+(T-t)\hat{Y}$ and 
	\ben
	\hat{\cG}(z,t):=(\hat{\cH}(t)-z)^{-1}\,.
	\ee
	Note that by assumption $X$ has a decomposition $X=X_0+ \e^{-N}Q$, where the empirical measure of eigenvalues of $X_0$ converges weakly to $\mu_1$ and $Q$ is drawn from the Gaussian Unitary ensemble.  Therefore, the empirical measure of eigenvalues of $X$ converges weakly to $\mu_1$ almost surely. Recall that $T=N^{-1+\mathfrak{b}}$ and the bound \eqref{hatY bound}, we have
	\ben
	\norm{(T-t)\hat{Y}}\leq N^{-1+\mathfrak{b}}\log N\,.
	\ee
	It follows that the empirical measure of eigenvalues of $Y+(T-t)\hat{Y}$ converges to $\mu_2$ weakly.  Therefore, the conditions of Theorem 2.4 in \cite{BES16} are satisfied. Therefore we have, for any fixed $t\in[0,T]$, and $\nu>0$,
	\be\label{ineq: hatcG}
	\sup_{z\in\mathcal{D}_{I,\nu}}\max_{1\leq i\leq N}\absa{\hat\cG_{ii}(z,t)- \frac{1}{y_i+(T-t)\hat{y}_i+\fw_1(z) }}\leq \frac{N^\nu}{\sqrt{N\eta}}\,,
	\ee
	with overwhelming probability, with $\fw_1$ satisfying \eqref{eqn:w1stab}.
	
	Next, we estimate the difference $\abs{\hat{\cG}_{ii}(z,t)-\cG_{ii}(z,t)}$.  By the resolvent identity $Q_1^{-1}-Q_2^{-1}=Q_1^{-1}(Q_2-Q_1)Q_2^{-1}$, we have
	\ben
	\begin{split}
		\abs{\hat{\cG}_{ii}-\cG_{ii}} &=\abs{\sum_{k,l} \hat{\cG}_{ik}(\cH-\hat{\cH})_{kl}\cG_{li}}\\
		&\leq \norm{\cH-\hat{\cH}} \sqrt{\sum_k \abs{\hat{\cG}_{ik}}^2}\sqrt{\sum_k \abs{\cG_{li}}^2}\,.
	\end{split}
	\ee
	By the Ward Identity, $\sqrt{\sum_k \abs{\hat{\cG}_{ik}}^2}=\sqrt{\im \hat{\cG}_{ii}/\eta}$, $\sqrt{\sum_k \abs{{\cG}_{ik}}^2}=\sqrt{\im {\cG}_{ii}/\eta}$.  
	We have that $\norm{\cH-\hat{\cH}}\leq N^{-1-8\mathfrak{b}}$ with probability $1-\e^{-N^{10\mathfrak{b}}}$.  Therefore, with probability $1-\e^{-N^{10\mathfrak{b}}}$, for $\forall z\in\mathcal{D}_{I,\nu},t\in[0,T]$,
	\ben
	\begin{split}
		\abs{\hat{\cG}_{ii}-\cG_{ii}} &\leq  \frac{ N^{ - 8 \mfb} }{ N \eta} \sqrt{\im \hat{\cG}_{ii}\im {\cG}_{ii}}\\
		&\leq \frac{ N^{ - 8 \mfb} }{ N \eta} \sqrt{\abs{\hat{\cG}_{ii}} (\abs{\hat{\cG}_{ii}} +\abs{\hat{\cG}_{ii}-\cG_{ii}} )}\,.
	\end{split}
	\ee
	A simple calculation yields, for any $1\leq i\leq N$,
	\be\label{eq: hatG-G}
	\abs{\hat{\cG}_{ii}-\cG_{ii}} \leq 2 \frac{ N^{ - 8 \mfb} }{ N \eta}  \abs{\hat{\cG}_{ii}} \,.
	\ee
	This estimate together with \eqref{ineq: hatcG} and \eqref{eqn:w1stab} gives, for any $t\in[0,T]$,
	\be\label{ineq: cG}
	\sup_{z\in\mathcal{D}_{I,\nu}}\max_{1\leq i\leq N}\absa{\cG_{ii}(z,t)- \frac{1}{y_i+(T-t)\hat{y}_i+\fw_1(z) }}\leq \frac{N^\nu}{\sqrt{N\eta}}\,,
	\ee
	with overwhelming probability.
	
	To conclude the proof, we need to look at the continuity of $\mathcal G(z,t)$ with respect to $t$.   We divide the time interval $[0,T]$ into $N^{100}$ parts by $t_l:= lT/N^{100}$. For each $t_l$, we set $t=t_l$ in \eqref{ineq: cG}, and so by a union bound we have the estimate
	\be\label{ineq: cGtl}
	\sup_{0\leq l\leq N^{100}}\sup_{z\in\mathcal{D}_{I,\nu}}\max_{1\leq i\leq N}\absa{\cG_{ii}(z,t_l)- \frac{1}{y_i+(T-t_l)\hat{y}_i+\fw_1(z) }}\leq \frac{N^\nu}{\sqrt{N\eta}}\,,
	\ee 
	with overwhelming probability.  Again, by the resolvent identity,
	\be
	\norm{\cG(z,t)-\cG(z,s)}\leq \norm{\cG (z,t)}\norm{\cG(z,s)} \norm{\cH(s)-\cH(t)}  \leq N^2 \norm{\cH(s)-\cH(t)}  \,,\quad \forall 0\leq t,s\leq T\,.
	\ee
	By definition, $\norm{\cH(s)-\cH(t)}\leq 2\norm{U(s)-U(t)}(\abs{\norm{X}+\norma{Y}})$.  Recall that $X$ has a small Gaussian component of size $\e^{-N}$, $\norm{X}$ is bounded by $K+1$ with probability $1-\e^{-\e^{N/2}}$.  By Theorem \ref{thm: Us-Ut} we then derive the estimate
	\be
	\P\left[\sup_{1\leq l\leq N^{100} }\sup_{s\in[t_{l-1},t_{l+1}]}  \norm{\cG(z,t_l)-\cG(z,s)}\geq N^{-10}\right] \geq N^{100}\e^{-N^{\mathfrak{a}/3}}\,.
	\ee
	This estimate together with \eqref{ineq: cGtl} yields
	\ben
	\sup_{0\leq t\leq T}\sup_{z\in\mathcal{D}_{I,\nu}}\max_{1\leq i\leq N}\absa{\cG_{ii}(z,t)- \frac{1}{y_i+(T-t)\hat{y}_i+\fw_1(z) }}\leq \frac{N^\nu}{\sqrt{N\eta}}\,,
	\ee 
	with overwhelming probability.			
\end{proof}

\begin{cor}\label{cor: wbd}
	Let $I$ be an closed interval on which the probability measure $\mu:= \mu_1 \boxplus \mu_2$ has a  strictly positive density. Then,  for any $\nu>0$, the following estimates hold with overwhelming probability.
	\be
	\sup_{0\leq t\leq T}\max_{\la_k \in I}\max_{1\leq \alpha \leq N} \abs{w_{\alpha k}} \leq \frac{N^\nu}{\sqrt{N}}\,,\quad
	\sup_{0\leq t\leq T}\max_{\la_i \in I}\max_{j\neq i} \ga_{ij} + \ga_{ji} \leq \frac{N^{\mathfrak{a} +\nu}}{N}\,.
	\ee
\end{cor}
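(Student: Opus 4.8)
The plan is to deduce both estimates from the bulk local law for $\cG_{kk}(z,t)$ proved in Theorem \ref{thm: bulk law}, exactly as one derives delocalization of eigenvectors from a local law in the standard Wigner setting. First I would fix a closed interval $I'$ strictly containing $I$ on which $\mu=\mu_1\boxplus\mu_2$ still has a strictly positive density (such an $I'$ exists by the openness of the set where $\rho>0$, up to shrinking), and apply Theorem \ref{thm: bulk law} on $\mathcal{D}_{I',\nu/2}$. Then for any index $k$ with $\lambda_k(t)\in I$ and any $\alpha$, I set $z=\lambda_k(t)+\ii\eta$ with $\eta=N^{-1+\nu/2}$; note $z\in\mathcal{D}_{I',\nu/2}$ for $N$ large. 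Writing the spectral decomposition of $\cH(t)$ as in Proposition \ref{prop: imcG}, one has the elementary bound
\be
	|w_{\alpha k}|^2 \leq \sum_{l}\frac{\eta^2|w_{\alpha l}|^2}{|\lambda_l-\lambda_k|^2+\eta^2} = \eta\,\im\cG_{\alpha\alpha}(z,t)\,.
\ee
By Theorem \ref{thm: bulk law}, $\im\cG_{\alpha\alpha}(z,t)$ is within $N^{\nu/2}/\sqrt{N\eta}$ of $\im\big(-z+y_\alpha+(T-t)\hat y_\alpha+\fw_1(z)\big)^{-1}$, which is $\OO(1)$ uniformly on $\mathcal{D}_{I',\nu/2}$ because $-\im\fw_1\geq c>0$ there by \eqref{eqn:w1stab} (this lower bound on the imaginary part of the denominator is what makes the approximating quantity bounded). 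Hence $\im\cG_{\alpha\alpha}(z,t)=\OO(1)+\OO(N^{\nu/2}/\sqrt{N\eta})=\OO(1)$, and therefore $|w_{\alpha k}|^2\leq \eta\cdot\OO(1)=\OO(N^{-1+\nu/2})\leq N^{2\nu}/N$, giving $|w_{\alpha k}|\leq N^{\nu}/\sqrt{N}$ after relabelling $\nu$. All of this is uniform in $t\in[0,T]$ and in the relevant $\alpha,k$ because the estimate in Theorem \ref{thm: bulk law} is, so the supremum over $t$ and the maxima over $\alpha$ and over $k$ with $\lambda_k\in I$ cost only a union bound over $N$ choices, which is harmless against overwhelming probability.

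For the second estimate I would simply plug the first into the definition $\gamma_{ij}=\sum_{|\alpha-\beta|<N^{\mathfrak{a}}}|w_{\alpha i}|^2|w_{\beta j}|^2$. For a fixed row index $\alpha$ there are at most $2N^{\mathfrak{a}}$ indices $\beta$ with $|\alpha-\beta|<N^{\mathfrak{a}}$, so for $\lambda_i\in I$ and any $j$,
\be
	\gamma_{ij}=\sum_{|\alpha-\beta|<N^{\mathfrak{a}}}|w_{\alpha i}|^2|w_{\beta j}|^2 \leq \Big(\max_{\alpha}|w_{\alpha i}|^2\Big)\Big(\max_{\beta}|w_{\beta j}|^2\Big)\sum_{|\alpha-\beta|<N^{\mathfrak{a}}}1 \leq \frac{N^{2\nu}}{N}\cdot\frac{1}{N}\cdot 2N^{1+\mathfrak{a}} = \frac{2N^{\mathfrak{a}+2\nu}}{N}\,,
\ee
using that $\sum_{\alpha}\sum_{\beta:|\alpha-\beta|<N^{\mathfrak{a}}}1\leq 2N^{1+\mathfrak{a}}$ and the first bound; here one must be slightly careful because the bound $|w_{\beta j}|\leq N^\nu/\sqrt N$ from the first part requires $\lambda_j\in I$, whereas the statement allows arbitrary $j$. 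To handle a general $j$ one uses instead the crude global delocalization bound $\sum_\beta|w_{\beta j}|^2=1$ together with $\max_\alpha|w_{\alpha i}|^2\leq N^{2\nu}/N$, which already gives $\gamma_{ij}\leq (N^{2\nu}/N)\cdot\#\{\beta:|\alpha_0-\beta|<N^{\mathfrak{a}}\text{ for some }\alpha_0\}\cdot 1$, but to get the factor $N^{\mathfrak a}$ rather than $N$ one observes $\gamma_{ij}=\sum_\alpha|w_{\alpha i}|^2\sum_{|\alpha-\beta|<N^{\mathfrak a}}|w_{\beta j}|^2\leq \max_\alpha|w_{\alpha i}|^2\cdot\sum_\alpha\sum_{|\alpha-\beta|<N^{\mathfrak a}}|w_{\beta j}|^2\leq (N^{2\nu}/N)\cdot 2N^{\mathfrak a}\sum_\beta|w_{\beta j}|^2=2N^{\mathfrak a+2\nu}/N$. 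The same argument bounds $\gamma_{ji}$. Relabelling $\nu$ absorbs the constant $2$ and the factor $N^{2\nu}$.

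I do not expect a serious obstacle here: this corollary is a routine consequence of Theorem \ref{thm: bulk law} plus the Ward-identity/spectral-decomposition trick. The only points requiring minor care are (i) enlarging $I$ to $I'$ so that $z=\lambda_k+\ii\eta$ with $\eta=N^{-1+\nu}$ stays in a region covered by the local law when $\lambda_k\in I$ (this uses that eigenvalues near the bulk are rigid, or simply that $I\subset \mathrm{int}\,I'$ and $\eta\to0$), (ii) checking that the denominator $-z+y_\alpha+(T-t)\hat y_\alpha+\fw_1(z)$ in Theorem \ref{thm: bulk law} is bounded below in modulus uniformly on $\mathcal{D}_{I',\nu}$, which follows directly from $-\im\fw_1\geq c$ in \eqref{eqn:w1stab}, and (iii) the bookkeeping for general index $j$ in the $\gamma_{ij}$ bound, handled via $\sum_\beta|w_{\beta j}|^2=1$ as above. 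The uniformity in $t\in[0,T]$ is inherited verbatim from Theorem \ref{thm: bulk law}, so no additional continuity argument is needed beyond what was already done there.
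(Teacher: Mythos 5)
Your proposal is correct and follows essentially the same route as the paper: the bound $\abs{w_{\alpha k}}^2 \leq \eta\, \im \cG_{\alpha\alpha}(\la_k + \ii \eta, t)$ with $\eta = N^{-1+\nu}$, the local law of Theorem \ref{thm: bulk law} (with $-\im \fw_1 \geq c$ giving the $\mathcal{O}(1)$ bound on the approximant), and then the definition of $\ga_{ij}$ together with the normalization $\sum_\beta \abs{w_{\beta j}}^2 = 1$ and the counting factor $\mathcal{O}(N^{\mathfrak{a}})$ for the second estimate. The extra steps you add (enlarging $I$ to $I'$, the explicit handling of the general index $j$) are harmless refinements of the same argument.
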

\begin{proof}
	For any $\la_k\in I$, we set  $z_k:=\la_k+\ii N^{-1+\nu}$. Denote $\eta=\im z$. Then,
	\ben
	\im \cG_{\alpha\alpha}(z_k,t) = \sum_{l}\frac{\im z_k\abs{w_{\alpha l}}^2}{({\la_l-\la_k})^2+\im z_k^2}\geq \frac{\im z_k\abs{w_{\alpha k}}^2}{(\la_k-\la_k)^2+\im z_k^2}=N^{1-\nu}\abs{w_{\alpha k}}^2.
	\ee
	It follows that
	\ben
	\abs{w_{\alpha k}}^2 \leq N^{-1+\nu} \im  \cG_{\alpha\alpha}(z_k,t) \,.
	\ee
	Taking the maximum over $k$, $\alpha$ and $t$, we have 
	\ben
	\sup_{0\leq t\leq T}\max_{\la_k \in I}\max_{1\leq \alpha \leq N} \abs{w_{\alpha k}} ^2 \leq N^{-1+\nu} \sup_{0\leq t\leq T}\max_{\la_k \in I}\max_{1\leq \alpha \leq N}  \abs{\cG_{\alpha\alpha}(z_k,t)}\,.
	\ee
	Theorem \ref{thm: bulk law} implies that the right hand side is bounded by $N^{-1+2\nu}$ with overwhelming probability. This gives the first estimate in the corollary.  The second follows from the definition of $\gamma_{ij}$ and the normalization $\sum_\alpha |w_{\alpha k } |^2 = 1$.
	%
\end{proof}

\begin{cor}[Estimate on the initial data $\tilde{H}(0)$] \label{cor:initlaw}
Under the same assumptions as Theorem \ref{thm: bulk law}, for some constant $p>0$ we have
\ben
	\sup_{z\in \mathcal{D}_{I,\nu}} \absa{\frac{1}{N}\tr \left(\frac{1}{\tilde{H}(0)-z}\right)- m(z)} \leq N^{-\frac{1}{p}}+\frac{N^\nu}{\sqrt{N\eta}}\,
\ee
with overwhelming probability.
\end{cor}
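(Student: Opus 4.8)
The plan is to recognize that $\tilde H(0)$ is exactly the matrix whose diagonal resolvent entries are controlled by Theorem \ref{thm: bulk law}, then to pass from that entrywise local law to the normalized trace by averaging over $i$, identify the resulting deterministic sum with the approximate Stieltjes transform $\fm$, and finally replace $\fm$ by $m$ using the bulk stability estimate of Theorem \ref{thm: w1-fw1 bulk}.

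First I would note that since $U(0)=I$ we have $\cH(0)=V^*XV+Y+T\hat Y=\tilde H(0)$, so that $\frac1N\tr\big((\tilde H(0)-z)^{-1}\big)=\frac1N\sum_{i}\cG_{ii}(z,0)$. Applying Theorem \ref{thm: bulk law} at time $t=0$ gives, with overwhelming probability, uniformly on $\mathcal D_{I,\nu}$, that each $\cG_{ii}(z,0)$ is within $N^\nu(N\eta)^{-1/2}$ of $\big(-z+y_i+T\hat y_i+\fw_1(z)\big)^{-1}$. Averaging over $i$ — the error bound is deterministic and independent of $i$ — shows that $\frac1N\tr\cG(z,0)$ is within $N^\nu(N\eta)^{-1/2}$ of $\frac1N\sum_i\big(-z+\bar Y_i+\fw_1(z)\big)^{-1}=\fm_2\big(z-\fw_1(z)\big)$, where $\bar Y_i=\bar Y_i(0)=y_i+T\hat y_i$ and $\fm_2$ is the Stieltjes transform of $\frac1N\sum_i\delta_{\bar Y_i}$. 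By the first equation of the subordination system \eqref{eq: sce4}, this quantity is precisely $\fm(z)$, so with overwhelming probability $\sup_{z\in\mathcal D_{I,\nu}}\big|\frac1N\tr\cG(z,0)-\fm(z)\big|\le N^\nu(N\eta)^{-1/2}$.

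It then remains only to compare $\fm(z)$ with $m(z)$. Since $\mathcal D_{I,\nu}$ is contained in the domain $\Sigma$ of Theorem \ref{thm: w1-fw1 bulk} and $\mu=\mu_1\boxplus\mu_2$ has a density on $I$ that is bounded away from zero (and, being a free convolution with a bounded-density factor, bounded above), that theorem yields $\sup_{z\in\mathcal D_{I,\nu}}|\fm(z)-m(z)|\le N^{-1/q}$. Combining the two estimates by the triangle inequality, and choosing $p$ large enough (e.g.\ $p\ge q$), gives the claimed bound $N^{-1/p}+N^\nu(N\eta)^{-1/2}$; the two events hold simultaneously with overwhelming probability because their intersection does.

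I do not anticipate a serious obstacle: the corollary is essentially a bookkeeping combination of Theorem \ref{thm: bulk law} (the local law for $\cG$ with its explicit approximating profile) and Theorem \ref{thm: w1-fw1 bulk} (bulk stability of the subordination equations). The only points needing minor care are (i) confirming that the density of $\mu$ on $I$ is bounded above so that Theorem \ref{thm: w1-fw1 bulk} applies, and (ii) checking that the domains match, i.e.\ $\mathcal D_{I,\nu}\subseteq\Sigma$, which is immediate from $N^{-1+\nu}\le\eta\le1$.
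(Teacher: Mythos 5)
Your proposal is correct and follows essentially the same route as the paper: apply Theorem \ref{thm: bulk law} at $t=0$, identify the averaged deterministic profile with $\fm(z)$ via the definition of $\fm_2$ and the system \eqref{eq: sce4}, and then replace $\fm$ by $m$ using Theorem \ref{thm: w1-fw1 bulk}. The extra details you spell out (the identity $\tilde H(0)=\cH(0)$, the averaging over $i$, and the domain check $\mathcal{D}_{I,\nu}\subseteq\Sigma$) are exactly what the paper's shorter proof leaves implicit.
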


\begin{proof}
Note that $\tilde{H}(0)=\mathcal{H}(0)$. Theorem \ref{thm: bulk law} implies that with overwhelming probability,
\ben
	\sup_{z\in \mathcal{D}_{I,\nu}} \absa{\frac{1}{N}\tr \left(\frac{1}{\tilde{H}(0)-z}\right)- \frac{1}{N}\sum_i \frac{1}{-z+y_i+T\hat{y}_i+\fw_1(z)} }\leq \frac{N^\nu}{\sqrt{N\eta}}\, .
\ee
By definition of $\fm_2$ (see the paragraph before \eqref{eq: sce4}) and \eqref{eq: sce4}, the above inequality reads,
\ben
\sup_{z\in \mathcal{D}_{I,\nu}} \absa{\frac{1}{N}\tr \left(\frac{1}{\tilde{H}(0)-z}\right)- \fm(z)}\leq \frac{N^\nu}{\sqrt{N\eta}}\,.
\ee
The conclusion follows from this estimate and Theorem \ref{thm: w1-fw1 bulk}.
\end{proof}

\section{Analysis of the SDE}\label{sec: DBM}

Our starting point is the SDE 
\beq
\d \lambda_i = \frac{ \d B_i }{ \sqrt{N}}+ \frac{1}{N} \sum_j \frac{ 1 - \gamma_{ij}}{ \lambda_i - \lambda_j } + \d M_i + Z_i \d t ,
\eeq
for $0 \leq t \leq T$ with
\beq
T := \frac{ N^{\fb}}{N}
\eeq
The martingale $M_i$ is given by
\beq
\d M_i := - \frac{1}{ \sqrt{N}} \sum_{ |a-b| \leq N^{\fae}} w_{ai}^* \d B_{ab} w_{bi},
\eeq
and the drift term $Z_i$ is given by
\beq
Z_i = \langle a_i, ( U^* \hat{Y} U - \hat{Y} ) a_i \rangle.
\eeq
Thanks to Theorems \ref{thm: Ut-I} and \ref{thm: edge deloc} we have the estimates
\beq
\sup_{0 \leq t \leq T } |Z_i (t) | \leq N^{ - 9 \fb}, \quad \sup_{ 0 \leq t \leq T } |w_{ab} | + |\gamma_{ij} |  \leq N^{ - \frc_1} 
\eeq
for some $\frc_1 >0$ with overwhelming probability.  We assume that 
\beq
\fb < \fae/100, \qquad \fae < \frc_1 / 10.
\eeq
We will compare $\lambda_i$ to the process $\mu_i$ defined by
\beq
\d \mu_i = \frac{ \d B_i }{ \sqrt{N}} + \frac{1}{N} \sum_j \frac{1}{ \mu_i - \mu_j } , \qquad \mu_i (0) = \lambda_i (0).
\eeq
Let $I = (a, b)$ be a interval on which the limiting law $\mu_1 \boxplus \mu_2$ has a density bounded away from $0$ and above.  We use the notation
\beq
I_\kappa := (a + \kappa , b - \kappa ).
\eeq
We will also make use of the following index set.  Let $\gXY_i$ be the $i$th classical eigenvalue location of $\mu_1 \boxplus \mu_2$.  Define the index set $\Gkap$ by
\beq
\Gkap := \{ i : \gXY_i \in I_\kappa \}.
\eeq
Note that Corollary \ref{cor:initlaw} implies that
\beq \label{eqn:initrig}
| \gXY_i - \lambda_i (0) | \leq N^{ - c}
\eeq
for some $ c>0$ for $i \in \Gkap$ with overwhelming probability.

The main result of this section is the following Theorem.
\bet \label{thm:dbmcompare}  Fix $\kappa >0$.  Assume that $\fb < \fae / 100$ and $\fae < \frc_1 / 10$.  For every time $t$ with $0 \leq t \leq T$ we have with overwhelming probability we have for every index $i \in \Gkap$,
\beq
| \lambda_i  (t) - \mu_i (t) | \leq \frac{1}{N} \left( N^{ - \frc_1/5} + N^{-5 \fb} + N^{ -1/4 } \right).
\eeq
\eet

\subsection{Removal of error terms and regularization}
We fix a small $c_2 >0$ which satisfies
\beq
0 < c_2 < \frc_1.
\eeq
We introduce an auxilliary process $z_i (t, \alpha )$ for $0 \leq \alpha \leq 1$ by
\beq
\d z_i ( t, \alpha) = \sqrt{ \frac{1}{ 1 + N^{ - c_2 }}} \d B_i + \frac{1}{N} \sum_j \frac{ 1 - \alpha \hatgam_{ij}}{ z_i ( t, \alpha) - z_j (  t, \alpha) } \d t, \qquad z_i (0, \alpha ) = \lambda_i (0)
\eeq
where
\beq
\hatgam_{ij} = \gamma_{ij} \wedge N^{ - \frc_1}.
\eeq
The reason for the introduction of the $N^{-c_2}$ term is technical and only necessary in the case $\beta =1$.  In this case since the coefficient infront of the Brownian motion term and numerator of the drift term satisfies
\beq
(1 + N^{ c_2 } ) ( 1 - \alpha \gamhat_{ij} ) > 1
\eeq
the process $z_i (t, \alpha )$ is well-defined and satisfies $z_i (t, \alpha ) < z_{i+1} (t, \alpha )$ for every $t$.  This can be proven via the methods of \cite{AGZ}.  This implies that $z_i (t, \alpha)$ is a differentiable function of $\alpha$, which we will use later.  Without the regularizing $N^{-c_2}$ term, this would not be true in the $\beta=1$ case due to possible eigenvalue collisions.

The following compares the processes $z_i (t, 1)$ to $\lambda_i (t)$ and $z_i (t, 0)$ to $\mu_i (t)$.  The proof is essentially a regularized parabolic maximum principle, the regularization being needed to apply the It{\^o} formula and deal with the error term $M_i$.

\bel \label{lem:lamz}
With overwhelming probability we have
\beq \label{eqn:lamz}
\sup_i \sup_{0 \leq s \leq T} | z_i (s, 1) - \lambda_i (s) | \leq \frac{N^{\eps}}{N} \left( N^{ - c_4} + N^{- 8 \fb} + N^{\frb/2} ( N^{\fae/2-\frc_1} + N^{ - c_2 } ) + N^{ \frb + c_4} ( N^{ \fae-2 \frc_1} + N^{ - 2 c_2 } )   \right)
\eeq
for any $\eps >0$ and $c_4 >0$.  Similarly,
\beq \label{eqn:muz}
\sup_i \sup_{0 \leq s \leq T } | z_i (s, 0) - \mu_i (s) | \leq \frac{N^{\eps}}{N} \left( N^{ - c_4 } + N^{\fb/2} N^{ - c_2} + N^{ \fb + c_4 - 2 c_2} \right).
\eeq
\eel
\proof We only prove \eqref{eqn:lamz}.  The proof of \eqref{eqn:muz} is the same but easier.  Define a stopping time $\tau$ by
\beq
\tau := \tau_1 \wedge \tau_2 \wedge  \tau_3 \wedge \tau_4 \wedge T
\eeq
where
\beq
\tau_1 := \inf \{ t : \exists (i, j) : \gamma_{ij} > N^{ - \frc_1} \}
\eeq
and
\beq
\tau_2 := \inf \{ t: \exists (i, j) : |w_{ij} | > N^{ - \frc_1 } \}
\eeq
and
\beq
\tau_3 := \inf \{ t: \exists i : |z_i (t, 1) | + | \lambda_i (t) | > R \}
\eeq
for some large $R>0$.  Finally define
\beq
\tau_4 := \inf \{ t : \exists i : |Z_i | > N^{ - 9 \fb} \}.
\eeq
We know that $\tau_1 \wedge \tau_2 \wedge \tau_4 \geq T$ with overwhelming probability and that $|\lambda_i (t) | \leq R$ with overwhelming probability.  We will see later (see Lemma \ref{lem:weakglobal}) that $|z_i (t, 1)  - z_i (t, 0) | \leq C$ with overwhelming probability.  By \cite{dbmrigid} the process $z_i (t, 0)$ stays bounded with overwhelming probability, and so 
\beq
\tau = T
\eeq
with overwhelming probability.  Note that for $ t \leq \tau$ we have $\gamma_{ij} = \gamhat_{ij}$.

For the rest of the proof we denote $z_i = z_i (t, 1)$.  Define
\beq
u_i := \lambda_i - z_i
\eeq
For $t \leq \tau$, this satisfies the equation
\beq
\d u_i = \sum_j B_{ij} (u_j - u_i ) \d t + \d M_i + Z_i \d t + \frac{A_N}{\sqrt{N}} \d B_i
\eeq
where 
\beq
B_{ij} = \frac{ 1 - \gamhat_{ij}}{( \lambda_i  - \lambda_j ) ( z_i - z_j ) } > 0
\eeq
and
\beq
A_N = \frac{1}{ \sqrt{ 1 + N^{ - c_2}}} - 1 = \O ( N^{ - c_2 } ).
\eeq
We fix a $c_4 >0$ to be chosen and let $\lambda := N^{1 + c_4 }$.  Define
\beq
F (t) := \frac{1}{ \lambda } \log \left( \sum_i \e^{ \lambda u_i } \right).
\eeq
By the It{\^o} lemma for $t \leq \tau$ we may calculate
\begin{align}
\d F (t) &= \frac{1}{ \sum_i \e^{ \lambda u_i } } \sum_i \e^{ \lambda u_i } \sum_j B_{ij} (u_j - u_i ) \label{eqn:mdiff} \\
&+ \frac{1}{ \sum_i \e^{ \lambda u_i } } \sum_i \e^{ \lambda u_i } \left(  \d M_i + Z_i \d t + N^{-1/2}  A_N \d B_i \right) \label{eqn:mmart} \\
&+ \lambda \frac{1}{ \sum_i \e^{ \lambda u_i } } \sum_i \e^{ \lambda u_i } \d \langle M_i +N^{-1/2} A_N B_i , M_i +N^{-1/2} A_N B_i \rangle \label{eqn:mito1} \\
&+ \lambda \frac{1}{ \left( \sum_i \e^{ \lambda u_i } \right)^2 } \sum_{i,j} \e^{ \lambda u_i } \e^{ \lambda u_j } \d \langle M_i + N^{-1/2} A_N B_i, M_j  + N^{-1/2} A_N B_j \rangle \label{eqn:mito2}
\end{align}
The first observation is that the term \eqref{eqn:mdiff} is negative,
\beq
\sum_i \e^{ \lambda u_i } \sum_j B_{ij} (u_j - u_i ) = \frac{1}{2} \sum_{ij} B_{ij} (u_j - u_i ) ( \e^{ \lambda u_i} - \e^{ \lambda u_j } ) \leq 0
\eeq
because $x \to \e^{ \lambda x}$ is an increasing function.  

We first bound \eqref{eqn:mmart}.  By definition of $\tau$,
\beq
\sup_{0 \leq t \leq \tau } \int_0^{t} \sum_i \frac{ \e^{ \lambda u_i }}{ \sum_k \e^{ \lambda u_k } } |Z_i | \d s \leq N^{ - 9 \fb } \int_0^{T} \sum_i \frac{ \e^{ \lambda u_i }}{ \sum_k \e^{ \lambda u_k } }  \leq T N^{ - 9 \fb}.
\eeq
We next calculate some quadratic variations.  We have for $t \leq \tau$,
\beq
d \langle M_i, M_j \rangle = \frac{1}{N} \sum_{ |a-b| \leq N^{ \fae} } |w_{ia} w_{bi}  w_{ja} w_{bj} | \leq \frac{N^{ \fae- 2 \frc_1}}{N}
\eeq
and
\beq
\frac{1}{N} \d \langle A_N B_i, A_N B_i \rangle \leq \frac{N^{ -2 c_2}}{N}.
\eeq
Hence we have by the BDG inequality,
\beq
\sup_{0 \leq t \leq \tau } \int_0^t \sum_i \frac{ \e^{ \lambda u_i } }{ \sum_k \e^{ \lambda u_k } } \left( \d M_i + N^{-1/2} A_N \d B_i \right) \leq \frac{ N^{\eps} T^{1/2}}{N^{1/2} } \left( N^{ \fae/2 - \frc_1 } + N^{ -c_2} \right)
\eeq
for any $\eps >0$ with overwhelming probability.  For the term \eqref{eqn:mito1} we expand out the covariation.  For the diagonal terms we obtain
\beq
\sup_{ 0 \leq t \leq \tau } \lambda \int_0^t \sum_i \frac{ \e^{ \lambda u_i }}{\sum_k \e^{ \lambda u_k }} \left( \d \langle M_i , M_i \rangle + \d \langle N^{-1/2} A_N B_i , N^{-1/2} A_N B_i \rangle \right) \leq \frac{ C \lambda T}{N} \left( N^{ \fae - 2 \frc_1 } + N^{ -2 c_2 } \right).
\eeq
For the off-diagonal terms we apply the Kunita-Watanabe inequality and obtain
\begin{align}
& \sup_{ 0 \leq t \leq \tau } \left| \lambda \int_0^t \sum_i \frac{ \e^{ \lambda u_i }}{\sum_k \e^{ \lambda u_k }}  \d \langle M_i , N^{-1/2} A_N B_i \rangle  \right| \notag\\
\leq &   \lambda \sum_i \left( \int_0^\tau \frac{ \e^{ \lambda u_i }}{ \sum_i \e^{ \lambda u_i } } \d \langle M_i, M_i \rangle \right)^{1/2} \left( \int_0^\tau \frac{ \e^{ \lambda u_i }}{ \sum_i \e^{ \lambda u_i } }  N^{-2 c_2 - 1}\d \langle B_i, B_i \rangle \right)^{1/2} \notag\\
\leq &\frac{ \lambda N^{ - c_2 + \fae/2-\frc_1 }}{N} \sum_i \left( \int_0^T  \frac{ \e^{ \lambda u_i }}{ \sum_i \e^{ \lambda u_i } } \d t \right) \leq \frac{ \lambda T N^{ - c_2 + \fae/2-\frc_1 }}{N}.
\end{align}
Applying the same argument for \eqref{eqn:mito2} we obtain for the diagonal terms,
\beq
\sup_{ 0 \leq t \leq \tau} \lambda \int_0^t \sum_{i, j} \frac{ \e^{ \lambda u_i } \e^{ \lambda u_j } }{ \left( \sum_k \e^{ \lambda u_k } \right)^2 } \left( \d \langle M_i , M_j \rangle + \d \langle B_i , B_j \rangle \right) \leq \frac{ C T \lambda}{N} \left( N^{ \fae-2 \frc_1 } + N^{ -2 c_2 } \right).
\eeq
Applying the Kunita-Watanabe inequality for the off-diagonal terms as above we obtain,
\begin{align}
& \sup_{ 0 \leq t \leq \tau }\left| \lambda  \int_0^t \sum_{i, j} \frac{ \e^{ \lambda u_i } \e^{ \lambda u_j } }{ \left( \sum_k \e^{ \lambda u_k } \right)^2} \d \langle M_i , A_N N^{-1/2} B_j \rangle \right| \notag\\
\leq & \lambda \sum_{i, j} \left( \int_0^{\tau} \frac{ \e^{ \lambda u_i } \e^{ \lambda u_j }}{ \left( \sum_k \e^{ \lambda u_k } \right)^2} \d \langle M_i , M_i \rangle  \right)^{1/2}\left( \int_0^{\tau} \frac{ \e^{ \lambda u_i } \e^{ \lambda u_j }}{ \left( \sum_k \e^{ \lambda u_k } \right)^2} A_N^2 N^{-1} \d \langle B_j , B_j \rangle  \right)^{1/2} \notag\\
\leq & \frac{ \lambda }{N} N^{ \fae/2 - \frc_1 - c_2 } \int_0^t \sum_{i, j} \frac{ \e^{ \lambda u_i } \e^{ \lambda u_j }}{ \left( \sum_k \e^{ \lambda u_k } \right)^2} \d s  \leq \frac{ \lambda T N^{ \fae/2 - \frc_1 - c_2 } }{N}.
\end{align}
We have therefore derived that
\beq
\sup_{ 0 \leq s \leq \tau} F (s) \leq F(0) + T N^{ -9 \fb} +  \frac{ N^{\eps} T^{1/2}}{N^{1/2} } \left( N^{ \fae/2 - \frc_1 } + N^{ -c_2} \right) + \frac{ C T \lambda}{N} \left( N^{ \fae-2 \frc_1 } + N^{ -2 c_2 } \right)
\eeq
Note that
\beq
F(0) = \frac{ \log(N) } { N^{1 + c_4 } }
\eeq
and
\beq
F(s) \geq \sup_{i} u_i (s).
\eeq
Hence we obtain the upper bound of \eqref{eqn:lamz}.  The same argument applies to $-u_i$ and so we obtain \eqref{eqn:lamz}. \qed

\subsection{Interpolating processes}
Consider the processes $z_i (t, \alpha)$ defined above.  It is not hard to see that the map $\alpha \to z_i (t, \alpha )$ is a Lipschitz function and so one can demand that almost surely, a solution exists for all $\alpha \in [0, 1]$.  Once this has been established it is easy to check that $z_i (t, \alpha )$ is in fact a differentiable function of $\alpha$.  The derivative $u_i := \del_\alpha z_i (t, \alpha )$ satisfies the equation
\beq
\del_t u_i = \sum_{j} B_{ij} (u_j - u_i ) + \sum_j \frac{ \hatgam_{ij} }{ z_i (t, \alpha ) - z_j (t, \alpha ) }:= - ( B u  )_i + \xi_i,
\eeq
where
\beq
B_{ij} := \frac{1}{N} \frac{ 1 - \hatgam_{ij}}{ ( z_i (t, \alpha ) - z_j (t, \alpha ) )^2}.
\eeq

We also pause here to introduce some notation.  The inner product on $\ell^2$ is (we will only have to consider real sequences)
\beq
\langle u, v \rangle = \frac{1}{N} \sum_{i=1}^N u_i v_i.
\eeq
This notation clashes with the covariation of martingales, but we will not need to calculate any more covariations in Section \ref{sec: DBM}. 
The $\ell^p$ norms are
\beq
||u||_p^p := \frac{1}{N} \sum_i |u_i|^p, \qquad || u||_\infty := \sup_{1 \leq i \leq N } |u_i|.
\eeq

\subsection{Weak global a-priori estimate}
We first derive a weak global estimate on the processes $z_i (t, \alpha)$.
\bel \label{lem:weakglobal}
With overwhelming probability we have,
\beq \label{eqn:weakglobal}
\sup_{ 0 \leq s \leq T } \sup_i \sup_{ 0 \leq \alpha \leq 1 } | z_i (0, \alpha ) - z_i (s, 0 ) | \leq  C N^{ \fb/2 - \frc_1}.
\eeq
\eel
\proof  We differentiate the $\ell^2$ norm and obtain
\begin{align}
\del_t \frac{1}{N} \sum_i u_i^2 &= \frac{1}{N} \sum_i  u_i B_{ij} (u_j - u_i )  + \frac{1}{N^2} \sum_{i, j} \frac{ u_i \hatgam_{ij}}{ z_i - z_j }  \notag\\
&= - \frac{1}{ 2 N^2} \sum_{ij} \frac{ (1 - \hatgam_{ij} ) (u_i - u_j ) }{ ( z_i -z_j )^2 } + \frac{1}{ 2 N^2 } \sum_{i, j} \frac{ (u_i - u_j ) \gamma_{ij}}{ z_i - z_j }
\end{align}
Above we used the symmetry $\hatgam_{ij} = \hatgam_{ji}$.  By Cauchy-Schwartz and the fact that $\hatgam_{ij} \ll 1$, we can bound this by
\beq
\del_t ||u||_2^2 \leq - \frac{1}{4} \langle u, B u \rangle + \frac{10}{ N^2} \sum_{i, j} | \hatgam_{ij } |^2 \leq C N^{ - 2 \frc_1}.
\eeq
Since $||u (0) ||_2 = 0$ this yields the claim. \qed

\subsection{Local law for $\alpha=0$ process}
Let $m_0 (z)$ be
\beq
m_0 (z) := \frac{1}{N} \sum_i \frac{1}{ \lambda_i (0) - z }.
\eeq
Define $m_t (z)$ to be the free convolution of $m_0 (z)$ with the semicircle law at time $t$, i.e., $m_t(z)$ is the unique solution to
\beq
m_t (z) = m_0 (z + t m_t (z) ) 
\eeq
vanishing as $|z| \to \infty$.  Then $m_t(z)$ is the Stieltjes transform of a measure with density $\rho_t (E)$.  By Theorem \ref{thm: bulk law} we have
\beq
c \leq \Im [ m_0 (E + \i \eta ) ] \leq C, \qquad E \in I_\kappa , \quad N^{\nu}/N \leq \eta \leq 10
\eeq
for any $\nu >0$ and $\kappa >0$.  Since $|m_t (z) |^2 \leq 1/t $ (see \cite{schnelli2}) we see that
\beq
c \leq \Im [ m_t (E + \i \eta ) ] \leq C , \qquad E \in I_\kappa , \quad N^{\nu}/N \leq \eta \leq 10, \quad 0 \leq t \leq T
\eeq
for any $\nu >0$ and $\kappa >0$.  Define the classical eigenvalue locations of the free convolution $\rho_t$ by $\gamma_i (t)$.  They satisfy
\beq
\del_t \gamma_i (t) = - \Re [ m_t ( \gamma_i (t) ) ]
\eeq
and since $|m_t| \leq t^{-1/2}$ we see that with overwhelming probability (also using \eqref{eqn:initrig})
\beq
i \in \Gkap \implies \gamma_i (t) \in I_{\kappa/2}, \qquad 0 \leq t \leq T.
\eeq
Therefore, for any $\nu >0$ and $i, j \in \Gkap$ we have
\beq \label{eqn:gamijspace}
c \frac{ |i-j|}{N} \leq | \gamma_i (t) - \gamma_j (t) | \leq C \frac{ |i-j|}{N}, \qquad N^{\nu} \leq |i-j|.
\eeq

For $\alpha =0$ the process $z_i (t, 0)$ satisfies the equation
\beq
\d z_i (t, 0) = \frac{ \d B_i}{ \sqrt{ 1 + N^{ - c_2 } } } + \frac{1}{N} \sum_j \frac{1}{ z_i (t, 0) - z_j (t, 0) } \d t.
\eeq
Therefore, Corollary 3.2 of \cite{dbmrigid} implies that
\beq \label{eqn:alpha0r}
\sup_{ 0 \leq t \leq T } \sup_{ i \in \Gkap} | z_i (0, t) - \gamma_i (t) | \leq \frac{ N^{\nu}}{N}
\eeq
with overwhelming probability for any $\nu >0$.

We remark that while the $\gamma_i (t)$ are random, we will essentially only be using the deterministic property \eqref{eqn:gamijspace} of the classical eigenvalue locations to derive the same property of the $z_i (\alpha, t)$.

\subsection{Local law for interpolating processes}

We define the empirical Stieltjes transform of the interpolating processes by
\beq
m_N (z, t, \alpha ) := \frac{1}{N} \sum_i \frac{1}{ z_i (t, \alpha ) - z }.
\eeq
It satisfies the equation
\begin{align}
\d m_N &= ( m_N  ) \del_z m_N \d t+ \d M_t +\frac{2-\beta_N}{N^2 \beta_N} \sum_i \frac{1}{ (z_i - z)^3} \d t  \notag\\
&+ \frac{1}{N^2} \sum_{i \neq j} \frac{  \alpha \gamhat_{ij}}{ (z_i - z )^2 (z_j - z)} \d t
\end{align}
where $M$ is a martingale term and $\beta_N = 2 ( 1 + N^{ -\frc_1} )$ (the value of $\beta_N$ is of no real importanc here).   The only difference between this and the equation appearing in \cite{dbmrigid} is the error term on the last line.   By Corollary \ref{cor: wbd} we have that for every pair of indices $(i, j)$ with either $i \in \G_{\kappa}$ or $j \in \G_{\kappa}$, the estimate
\beq
| \gamhat_{ij} | \leq \frac{ N^{ \fae+\eps}}{N}
\eeq
for any $\eps>0$ holds with overwhelming probability.  By the weak global estimate \eqref{eqn:weakglobal} and the rigidity estimate \eqref{eqn:alpha0r} we see that for every energy $E \in I_\kappa$ we have for any $\eps >0$ and $0 < t < T$ that with overwhelming probability,
\beq
\left| \frac{1}{N^2} \sum_{i, j} \frac{  \alpha \gamma_{ij}}{ (z_i - z )^2 (z_j - z)} \right| \leq C N^{- \frc_1} + \frac{ N^{ \fae+\eps}}{ N \eta^2 } \Im [ m_N (E + \i \eta ) ].
\eeq
Using this estimate, one can modify, in a straightforward fashion, the methods of \cite{dbmrigid} to derive the estimate (as long as $\fb < \frc_1$),
\beq
| m_N (z, t, \alpha ) - m_{ t } ( z) | \leq \frac{ N^{\fae+\eps}}{ N \eta}
\eeq
with overwhelming probability in the region
\beq
\{ E + \i \eta : E \in I_{\kappa} ,  10 \geq \eta \geq N^{\delta+ \fae}/N \} \cup \{ E + \i \eta : |E| \leq C, 1/2 \leq \eta \leq 10 \},
\eeq
for any $C>0$ and $\delta >0$.  Here, $m_t$ is as in the last subsection.  Standard methods then give us the rigidity estimate
\beq \label{eqn:optrigid}
| z_i (t, \alpha ) - \gamma_i (t) | \leq \frac{ N^{ 5 \fae }}{N}
\eeq
with overwhelming probability, for $i \in \G_{\kappa}$.

\subsection{Short-range approximation}
In this section we introduce the short-range approximation $\hatz_i (t, \alpha)$.  
Fix a $\kappa_* >0$ and denote
\beq
\G_{\kappa_*} = [[g_-, g_+ ]].
\eeq
The parameter $\kappa_*$ will be fixed for the rest of Section \ref{sec: DBM}.
 Fix an $\ell = N^{\om_\ell}$.   We choose
 \beq
 \om_{\ell} > 5 \fae.
 \eeq
  Define the index set
\beq
\A := \{ (i, j) : |i - j | \leq \ell \} \cup \{ (i, j) : i > g_+ , j > g_+ \} \cup \{ (i, j) : i < g_-, j < g_- \}.
\eeq
Define the short-range approximation
\beq
\d \hatz_i (\alpha, t)= \sqrt{ \frac{2}{ 1 + N^{ - c_2 } } } \frac{ \d B_i }{ \sqrt{N}} + \frac{1}{N} \sum_{ j : (i, j) \in \A } \frac{ 1 - \alpha \hatgam_{ij} }{ \hatz_i  (\alpha, t)- \hatz_j  ( \alpha, t) } \d t + \frac{1}{N} \sum_{ j : (i, j) \notin \A } \frac{1}{ z_i (0, t) - z_j (0, t) } \d t.
\eeq
By the strong rigidity esimates \eqref{eqn:optrigid} and \eqref{eqn:alpha0r} and the weak global estimate \eqref{eqn:weakglobal} we can bound for every $i$,
\beq
 \left| \frac{1}{N} \sum_{ j : (i, j) \notin \A } \frac{1}{ z_i (0, t) - z_j (0, t) }  -  \frac{1}{N} \sum_{ j : (i, j) \notin \A } \frac{1- \hatgam_{ij}}{ z_i (\alpha, t) - z_j (\alpha, t) }  \right| \leq N^{\eps} N^{- \frc_1} + N^{\frb/2-\frc_1 } + N^{5 \fa - \om_\ell}.
\eeq
Hence, by the proof of Lemma 3.7 of \cite{fixed} we obtain the following estimate.
\bel \label{lem:sr}
WIth overwhelming probability we have
\beq
\sup_i \sup_{ 0 \leq s \leq T } |z_i (s, \alpha ) - \hatz_i (s, \alpha ) | \leq \frac{N^{\eps}}{N}\left(  N^{ 3 \frb/2- \frc_1} + N^{ \fb + 5 \fae - \om_\ell} \right).
\eeq
for any $\eps >0$.
\eel
Note that this implies that the weak global estimate \eqref{eqn:weakglobal} and the rigidity estimate \eqref{eqn:optrigid} hold with $z_i (t, \alpha)$ replaced by $\hatz_i (t, \alpha)$, as long as $3 \fb/2 < \frc_1$ and $\frb + 5 \fae < \om_\ell$.

\subsection{Weak level repulsion estimates}
We will require the following weak level repulsion estimate which will allow us to make a cut-off later.
\bel \label{lem:lr}
With overwhelming probability we have,
\beq
\sup_{ i \neq j } \int_{0}^{T} \frac{1}{ |\hatz_i (t, \alpha ) - \hatz_{j} (t, \alpha ) | } \leq N^{10}.
\eeq
\eel
\proof For any $k$ we calculate
\begin{align}
\d \left( \sum_{i=1}^k \hatz_i  (t, \alpha ) \right) &= \sum_{i=1}^k \sqrt{ \frac{1}{1 + N^{-c_2} } } \frac{ \d B_i}{ \sqrt{N}} + \sum_{i=1}^k \sum_{ j : (i, j) \notin \A } \frac{1}{ z_i (t, 0) - z_j (t, 0) } \notag\\
& +\sum_{i=1}^k \sum_{j=k+1 , (i, j) \in \A }^N \frac{1 - \alpha \gamma_{ij}}{ \hatz_{i} (t, \alpha ) - \hatz_{j} (t, \alpha )}. \label{eqn:samesign} 
\end{align}
Note that every term in the second line \eqref{eqn:samesign} has the same sign and $ 1 - \alpha \hatgam_{ij} \geq 1/2$.  Hence we get the inequality
\begin{align}
 & \frac{1}{2} \int_{0}^{T} \frac{1}{ \hatz_{k+1} (t, \alpha ) - \hatz_{k} (t, \alpha ) }  \notag\\
\leq & \left| \sum_{i=1}^k \int_{0}^{t_0} \hatz_{i} (s, \alpha ) \d s - \sum_{j : (i, j) \notin \A} \frac{1}{z_i (s, 0) - z_j (t, 0 ) } \d s - \sqrt{ \frac{1}{1 + N^{-c_2} } } \frac{ \d B_i}{ \sqrt{N}}  \right| \notag\\
\leq & N^{10}
\end{align}
with overwhelming probability.
 \qed

\subsection{Cut-off of long-range terms}

Define now $\hatu := \del_\alpha \hatz$.  Then $\hatu$ satisfies the equation
\beq
\del_t \hatu_i = \sum_{j :(i, j) \in \A } \hatB_{ij} (\hatu_j - \hatu_i ) + \hatxi_i 
\eeq
where 
\beq
\hatB_{ij} = \frac{1}{N} \frac{1 - \alpha \hatgam_{ij}}{ ( \hatz_i (t, \alpha ) - \hatz_j (t, \alpha ))^2}
\eeq
and
\beq
\hatxi_i := \sum_{ j : (i, j ) \in \A } \frac{ \hatgam_{ij}}{\hatz_i (t, \alpha ) - \hatz_j (t, \alpha ) } .
\eeq
Define $v$ by $v_i (0) = 0$ and
\beq
\del_t v = - (\hatB v)_i + \1_{ \{ i \in \G_{ 2 \kappa_* }  \}} \zeta_i.
\eeq
where
\beq
\zeta_i = \frac{1}{N} \sum_{ j : (i, j ) \in \A } \1_{ \{ j \in \G_{ 2 \kappa_* } \} } \frac{ \hatgam_{ij}}{ \hatz_i - \hatz_j }.
\eeq
Note that by the choice of $\A$ we have $\zeta_i = \hatxi_i$ for $i \in \G_{3 \kappa_*}$.  The purpose of $v$ is to cut off error terms from the $\hatxi$ for which we do not have good estimates on $\hatgam_{ij}$.  The choice of $\zeta_i$ is motivated by a symmeterization in the summation indices $i, j$ later. 
By the Duhamel formula the difference satisfies
\beq
u_i (t) - v_i  (t)= \int_0^t \sum_{ j \notin \G_{ 3 \kappa_*  } } \UB_{ij} (s, t )  ( \hatxi_j (s)  - \zeta_j (s) )\d s,
\eeq
where we used that $\hatxi_i = \zeta_i$  for $i \in \G_{3 \kappa_*}$.
We assume $\fb < \om_\ell$.  The proof of Theorem 4.1 of \cite{fixed} implies that for each fixed $\alpha$ we have with overwhelming probability,
\beq
\sup_{ 0 \leq s \leq t \leq T } \UB_{ij} (s, t) \leq N^{-D}
\eeq
for any $D >0$ as long as $i \in \G_{4 \kappa_*}$ and $ j \notin \G_{ 3 \kappa_*}$, and as long as $5 \fae < 1/10$.  This estimate together with Lemma \ref{lem:lr} implies the following.
\bel
For every $\alpha $ there is an event that holds with overwhelming probability on which
\beq
\sup_{ i \in \G_{4 \kappa_*}} \sup_{ 0 \leq s \leq t_0 } | v_i (s) - \hatu_i (s) | \leq N^{-10}.
\eeq
\eel
Due to the fact that we will later apply Markov's inequality, we also require the following auxilliary bound on $\hatu$.
\bel \label{lem:uhatbd}
We have that almost surely,
\beq
\sup_{i} \sup_{ 0 \leq t \leq T } \sup_{ 0 \leq \alpha \leq 1 } | \hatu_i (t, \alpha ) | \leq C
\eeq
\eel
\proof This is a simple $\ell^2$ calculation. We have
\beq
\del_t  \frac{1}{N} \sum_i \hatu_i = \frac{-1}{2} \langle \hatu, B \hatu \rangle + \frac{1}{N^2} \sum_{ (i, j) \in \A } \frac{ \hatu_i \hatgam_{ij} }{ \hatz_i - \hatz_j }.
\eeq
The second term is bounded by
\begin{align}
\frac{1}{N^2} \sum_{ (i, j) \in \A } \frac{ \hatu_i \hatgam_{ij} }{ \hatz_i - \hatz_j } &= \frac{1}{2 N^2} \sum_{ (i, j) \in \A } \frac{ (\hatu_i - \hatu_j ) \hatgam_{ij} }{ \hatz_i - \hatz_j } \notag\\
&\leq \frac{1}{10} \langle \hatu, B \hatu \rangle + \frac{C}{ N^2} \sum_{i, j}  | \hatgam_{ij} |^2  \notag\\
&\leq \frac{1}{10} \langle \hatu, B \hatu \rangle + C N^{ - 2 \frc_1 },
\end{align}
and therefore by Gronwall,
\beq
\sup_{ 0 \leq t \leq T } || \hatu (t) ||_2^2 \leq  C T N^{ - 2 \frc_1}.
\eeq
The claim follows. \qed

\subsection{Energy method}
Finally we estimate $v$ using the energy method.  This is the main calculation of the current section.
\bel \label{lem:vbd} 
We have with overwhelming probability,
\beq
\sup_i \sup_{0 \leq s \leq T } |v_i (s) | \leq \frac{N^{\eps}}{N} \left( N^{ \fb /2 + 7/2\fae - 1/2 } + N^{ \fb +  \fae  - 1/2 } \right).
\eeq
for any $\eps >0$.
\eel
\proof Define
\beq
\A_1 := \A \cap \{ (i, j) : i \in \G_{2 k_*} \mbox{ and } j \in \G_{2 k_* } \}.
\eeq
We differentiate the $\ell^2$ norm and find
\beq
\del_t \frac{1}{N} \sum_{i} v_i^2  =  - \frac{1}{2}  \langle v , \hatB v \rangle +  \frac{1}{N^2} \sum_{(i, j ) \in \A_1 } \frac{ v_i  \hatgam_{ij}}{ \hatz_i - \hatz_j }.
\eeq
For $\eps >0$ we write the second term as
\begin{align}
\frac{1}{N^2} \sum_{(i, j ) \in \A_1 } \frac{ v_i  \hatgam_{ij}}{ \hatz_i - \hatz_j } &= \frac{1}{N^2} \sum_{(i, j ) \in \A_1 , |i-j| \leq N^{\eps + 5 \fae} } \frac{ v_i  \hatgam_{ij}}{ \hatz_i - \hatz_j }  \notag\\
&+\frac{1}{N^2} \sum_{(i, j ) \in \A_1, |i-j| > N^{\eps + 5 \fae} } \frac{ v_i  \hatgam_{ij}}{ \hatz_i - \hatz_j }
\end{align}
We can use rigidity and the estimate $\hatgam_{ij} \leq N^{\fa+\eps}/N$ to bound the second term by
\begin{align}
\left| \frac{1}{N^2} \sum_{(i, j ) \in \A_1, |i-j| > N^{\eps + 5 \fae} } \frac{ v_i  \gamma_{ij}}{ \hatz_i - \hatz_j }   \right| \leq &\frac{N^{ \fa +\eps}}{ N^2} \sum_i v_i \sum_j \frac{1}{ |i-j| } \notag\\
\leq & N^{\eps} \frac{ N^{ \fa + 2 \eps }}{N^2} \sum_i v_i \notag\\
\leq & \frac{1}{T} ||v||_2^2 + C N^{4 \eps} T  \frac{ N^{ 2 \fa}}{N^2}.
\end{align}
We absorb the first term into the negative energy term.
\begin{align}
&\frac{1}{N^2} \sum_{(i, j ) \in \A_1 , |i-j| \leq N^{\eps + 5 \fae} } \frac{ v_i  \hatgam_{ij}}{ \hatz_i - \hatz_j }  \notag\\
 = &\frac{1}{N^2} \sum_{(i, j ) \in \A_1 , |i-j| \leq N^{\eps + 5 \fae} }   \frac{ \hatgam_{ij} (v_i -v_j )}{ (z_i -z_j )} \notag\\
 \leq & \frac{1}{10}   \langle v, \hatB v \rangle + \frac{C}{N^2} \sum_{ (i, j) \in \A_1 , |i-j| \leq N^{ \eps + 5 \fae} } |\hatgam_{ij} |^2 \leq \frac{1}{10} \langle v, \hatB v \rangle + C \frac{ N^{ \eps + 7 \fae}}{N^3}.
\end{align}
By Gronwall's inequality we deduce
\beq
\sup_{0 \leq s \leq T } || v (s) ||_2^2 \leq \frac{N^{4 \eps}}{N^4} \left( N^{ \fb + 7\fa } +N^{ 2 \fb + 2 \fa } \right).
\eeq
Hence, after redefining $\eps$, we get
\beq
\sup_{ 0 \leq s \leq T} || v (s) ||_\infty \leq \frac{N^{\eps}}{N} \left( N^{ \fb /2 + 7/2\fae - 1/2 } + N^{ \fb +  \fae  - 1/2 } \right).
\eeq \qed

\subsection{Proof of Theorem \ref{thm:dbmcompare}}
We write
\begin{align}
\lambda_i (t) - \mu_i (t) &= ( \lambda_i (t) - z_i (t, 1) ) + ( z_i (t, 0) - \mu_i (t) )  + (z_i (t, 1) - z_i (t, 0) ).
\end{align}
By Lemma \ref{lem:lamz}, we have for $0 \leq t \leq T$,
\beq
| \lambda_i (t) - z_i (t, 1) | + | z_i (t, 0) - \mu_i (t) | \leq \frac{N^{\eps}}{N} \left( N^{ - \frc_1/4} + N^{ - 8\fb} \right)
\eeq
after choosing $c_2 = \frc_1 /2$ and $c_4 = \frc_1/4$.  

Applying Lemma \ref{lem:sr} we have
\beq
z_i (t, 1) - z_i (t, 0) = \hatz_i (t, 1) - \hatz_i (t, 0) + \frac{N^{\eps}}{N}  \left( N^{ - \frc_1/4} + N^{ - \fae} \right)
\eeq
after taking, say, $\om_\ell = 10 \fae$.  

We now write, recalling the notation $\hatu$ and $v$ of the  previous section
\begin{align}
\hatz_i (t, 1) - \hatz_i (t, 0) &= \int_0^1 \del_{\alpha} \hatz_i (t, \alpha ) \d \alpha \notag\\ 
&= \int_0^1 \hatu_i (t, \alpha ) \d \alpha \notag\\
&= \int_0^1 \hatu_i (t, \alpha ) \1_{ \F ( \alpha ) } \d \alpha + \int_0^1 \hatu_i (t, \alpha ) \1_{ \F^c (\alpha ) } \d \alpha
\end{align}
where $\F ( \alpha)$ is the event of Lemma \ref{lem:sr}.  Since $\hatu_i (t, \alpha )$ is bounded a.s. we have
\beq
\left| \int_0^1 \hatu_i (t, \alpha ) \1_{ \F^c (\alpha ) } \d \alpha \right| \leq N^{ - 100}
\eeq
with overwhelming probability, by Markov's inequality.

By the definition of $\F ( \alpha )$, we have for $i \in \G_{4 \kappa_* }$,
\begin{align}
 \int_0^1 \hatu_i (t, \alpha ) \1_{ \F ( \alpha ) } \d \alpha &=  \int_0^1 v_i (t, \alpha ) \1_{ \F ( \alpha ) } \d \alpha +  \O \left( N^{-10} \right)
\end{align}
Finally, by Lemma \ref{lem:vbd} we have
\beq
\left|  \int_0^1 v_i (t, \alpha ) \1_{ \F ( \alpha ) } \d \alpha \right| \leq N^{-5/4}
\eeq
with overwhelming probability.  This completes the proof. \qed

\subsection{Proof of Theorem \ref{thm:bu1}}
In this section we prove universality of the local statistics of $H$.  For simplicity we only do the $2$-point function.  Higher $k$-point functions and gap statistics are similar.  Let $\lambda_i (t)$ be as defined above.  We need to calculate the quantity
\beq
\sum_{i, j} \mathbb{E} [ O (  N (\lambda_i (T) - E),  N (\lambda_j (T) - E )) ]
\eeq
for smooth compactly supported $O$ and $E \in I_{\kappa}$ for some fixed $\kappa >0$.  We will eventually apply Theorem \ref{thm:dbmcompare} to replace this with an expectation over $\mu_i (T)$.  We first do some preliminary calculations.  The main result of \cite{fixed} implies that with overwhelming probability over the initial data $\lambda_i (0)$ we have 
\begin{align}
&\bigg| \sum_{i, j} \mathbb{E} [ O (  \rho_T (E) N (\mu_i (T) - E),  \rho_T (E) N (\mu_j (T) - E ))  | \{ \lambda_k (0) \}_k] \notag\\
&- \sum_{i, j} \bbE^{(GOE)} [ O ( \rhosc (E') N ( \mu_i - E' ), \rhosc (E') N ( \mu_i - E' ) ) ] \bigg| \leq N^{ - c}
\end{align}
for some $c> 0$ and any $E' \in (-2, 2)$.  The quantity $\rho_T (E)$ is random and we want to replace it by the deterministic quantity $\rho (E)$ which denotes the density of the free convolution  $\mu_1 \boxplus \mu_2$.   First note that since $ | \del_z m_T (z) | \leq C / T$ we have
\beq
| \rho_T (E) - \Im [ m_{T} (E + \i N^{\fb/2-1} ) ] | \leq  C N^{ - \fb/2}.
\eeq
By Corollary \ref{cor:initlaw} we have the estimate for any $\nu >0$,
\beq
\sup_{ E \in I_{\kappa} , N^{ \nu-1} \eta \leq 10 }  \left| m_0 (z ) - m (z) \right| \leq N^{ - c_3}
\eeq
for some $c_3  >0$, with overwhelming probability.   With this estimate, the proof of Lemma 3.6  in \cite{schnelli2} implies that
\beq
\sup_{ E \in I_{\kappa} , N^{ \nu-1} \eta \leq 10 }  \left| m_T (z ) - m_3 (z) \right| \leq N^{ - c_3/2}
\eeq
where $m_3(z)$ is the free convolution of $m (z)$ with the semicircle distribution at time $T$.   Since the density of $m(z)$ is analytic on its support which contains $I_{\kappa/2}$ it is not hard to see that
\beq
\sup_{ E \in I_{\kappa} , 0 \leq \eta \leq 10 } | m_3 (z) - m (z) | \leq N^{ - c_5}
\eeq
and then that
\beq
| \Im [ m (E + \i N^{ \fb/2 -1 } ) - \rho (E) | \leq N^{ -c_5}
\eeq
for some $c_5 >0$.  Hence,
\beq \label{eqn:rhodif}
| \rho_T (E) - \rho (E) | \leq N^{ - c_6}
\eeq
for some $c_6 >0$.   

Now let $i_0$ be the (random) index s.t. $\gamma_{i_0} (T)$ is closest to $E$.  Note that this is measureable wrt $\{\lambda_k (0) \}_k$.  The estimate \eqref{eqn:alpha0r} also holds for $\mu_i$ due to Lemma \ref{lem:lamz}.   We have with overwhelming probability over the $\lambda_i (0)$ that
\begin{align}
 & \sum_{i, j} \mathbb{E} [ O (  \rho_T (E) N (\mu_i (T) - E),  \rho_T (E) N (\mu_j (T) - E ))  | \{ \lambda_k (0) \}_k]  \notag\\
 = & \sum_{|i-i_0| + |j-i_0| \leq N^{\nu} } \mathbb{E} [ O (  \rho_T (E) N (\mu_i (T) - E),  \rho_T (E) N (\mu_j (T) - E ))  | \{ \lambda_k (0) \}_k] + N^{ -10}
\end{align}
for any $\nu >0$.

By \eqref{eqn:rhodif} we have
\begin{align}
 &\sum_{|i-i_0| + |j-i_0| \leq N^{\nu} } \mathbb{E} [ O (  \rho_T (E) N (\mu_i (T) - E),  \rho_T (E) N (\mu_j (T) - E ))  | \{ \lambda_k (0) \}_k] \notag\\
 = & \sum_{|i-i_0| + |j-i_0| \leq N^{\nu} } \mathbb{E} [ O (  \rho (E) N (\mu_i (T) - E),  \rho(E) N (\mu_j (T) - E ))  | \{ \lambda_k (0) \}_k]  + \O ( N^{  - c_6/2} ) 
\end{align}
provided $\nu < c_6 / 4$.  Applying Theorem \ref{thm:dbmcompare} we have,
\begin{align}
  & \sum_{|i-i_0| + |j-i_0| \leq N^{\nu} } \mathbb{E} [ O (  \rho (E) N (\mu_i (T) - E),  \rho (E) N (\mu_j (T) - E ))  | \{ \lambda_k (0) \}_k]  \notag\\
 = &  \sum_{|i-i_0| + |j-i_0| \leq N^{\nu} } \mathbb{E} [ O (  \rho (E) N (\lambda_i (T) - E),  \rho (E) N (\lambda_j (T) - E ))  | \{ \lambda_k (0) \}_k] + \O ( N^{ - c_7 } )
\end{align}
for some $c_7 >0$ as long as we take $\nu$ small enough.  Lastly since Theorem \ref{thm:dbmcompare} implies that the estimate \eqref{eqn:alpha0r} also holds for $\lambda_i (T)$ we see that
\begin{align}
& \sum_{|i-i_0| + |j-i_0| \leq N^{\nu} } \mathbb{E} [ O (  \rho (E) N (\lambda_i (T) - E),  \rho(E) N (\lambda_j (T) - E ))  | \{ \lambda_k (0) \}_k]  \notag\\
 = & \sum_{i, j } \mathbb{E} [ O (  \rho_T (E) N (\lambda_i (T) - E),  \rho_T (E) N (\lambda_j (T) - E ))  | \{ \lambda_k (0) \}_k] +\O ( N^{-100} )
\end{align}
with overwhelming probability.  Bulk universality follows after taking the expectation over $\lambda_i (0)$. \qed

\section{Well-posedness}\label{sec: wpness}


In this section, we  prove that the eigenvalues $\la(t)=(\la_1(t),\cdots,\la_N(t))$ of $\tilde{H}(t)$ are a strong solution to \eqref{eq: dla}. Thoughout this section, we fix a positive integer $N\in\N$.   We are only trying to establish well-posedness of the SDE \eqref{eq: dla} and so $N$-dependence of constants will play no role in our calculations; we are not trying to establish asymptoptic results.  As a result, all the constants appearing in this section may depend on $N$, but we omit the dependence for simplicity of notation.  The generic constant $c$ may change from line to line, but will only change finitely many times and therefore will remain finite.

Recall that in Section \ref{subsec: formal derivation}, we defined $U(t)$ to be the unique strong solution to the following SDE:
\be\label{eq: du a}
\dd U(t) = \ii \dd W(t) U(t) -\tfrac{1}{2}A U(t)\dd t\,,\quad U(0)=I\,.
\ee
Here $W(t)=(W_{\alpha\beta}(t)) = \left(\tfrac{\ind{\abs{\alpha-\beta}< N^\mathfrak{a}}}{\abs{y_\alpha-y_\beta}} B_{\alpha\beta}(t)\right)$, where $(B_{\alpha\beta}(t))$ is a Hermitian Brownian motion in the sense of Definition \ref{def: HBM}. For any constant $0<\mathfrak{b}<\mathfrak{a}$ and $T=N^{-1+\mathfrak{b}}$ we defined 
\ben\begin{split}
	H(t)&:=V^*XV+U(t)^*Y U(t)\,,\\
	\tilde{H}(t)&:= H(t)+ (T-t)\hat{Y}\,.
	\end{split}
\ee 
See \eqref{def: haty} for the definition of $\hat{Y}$.  For the reader's convenience, we restate equation \eqref{eq: dla} as follows
\be\label{eq: dla a}
\begin{split}
	\dd \la_i = &\frac{1}{\sqrt{N}}\dd B_i -\frac{1}{\sqrt{N}}\sum_{\abs{\alpha-\beta}<N^\mathfrak{a}}w_{\alpha i}^* \dd B_{\alpha\beta} w_{\beta i}\\ &+\frac{1}{N}\sum_{j\neq i} \frac{(1-\ga_{ij})\dd t}{\la_i-\la_j}+ \langle a_i, (U^*\hat{Y}U-\hat{Y}) a_i \rangle \dd t.
\end{split}
\ee
Here $\dd B_i := \sum_{\alpha,\beta}w_{\alpha i}^* \dd B_{\alpha\beta} w_{\beta i}$.  The vector $a_i(t)=(a_{\alpha i}(t))_{1\leq \alpha \leq N}$ is defined to be the eigenvector associated to the $i$-th smallest eigenvalue of $\tilde{H}(t)$, such that $a_k$'s first non-zero component is non-negative. We formally defined
\be
	w_i=(w_{\beta i})_{1\leq \beta \leq N} := \left(\sum_\alpha U_{\beta\alpha} a_{\alpha i}\right)_{1\leq \beta\leq N}\,,\quad \ga_{ij}:= \sum_{ \abs{\alpha-\beta}<N^\mathfrak{a}} \abs{w_{\alpha i}}^2 \abs{w_{\beta j}}^2\,.
\ee
Note that $(a_k)_{1\leq k\leq N}$, $(w_{\beta k})_{1\leq \beta,k\leq N}$  and $(\ga_{ij})_{1\leq i,j\leq N}$ are not well-defined if $\tilde{H}(t)$ has repeated eigenvalues. However, this does not cause any trouble because Proposition \ref{prop: tdX} says that for any fixed $t\geq 0$, the eigenvalues of $\tilde{H}(t)$ are almost surely distinct.  Hence, by Fubini's theorem we know that almost surely, $(w_{\beta k}(t))_{1\leq \beta,k\leq N}$, $(a_k(t))_{1\leq k\leq N}$ and $(\ga_{ij}(t))_{1\leq i,j\leq N}$ are well-defined for almost all $t\geq 0$.  Therefore, without loss of generality, we simply set them equal to $0$ where they are not well-defined.

For brevity of notation we define a semi-martingale $M(t)=(M_i)_{1\leq i\leq N}$ by $M(0)=0$ and 
\be
	\dd M_i(t) = \frac{1}{\sqrt{N}}\dd B_i -\frac{1}{\sqrt{N}}\sum_{\abs{\alpha-\beta}<N^\mathfrak{a}}w_{\alpha i}^* \dd B_{\alpha\beta} w_{\beta i}+\langle a_i, (U^*\hat{Y}U-\hat{Y}) a_i \rangle \dd t\,.
\ee
We define a potential function on the domain $\R_{<}^N:=\{ x=(x_1,\cdots,x_N)\in\R^N: x_1<\cdots<x_N\}$ through
\be
	V(x):= -\frac{1}{N} \sum_{i<j} (1-\ga_{ij})\log(x_j-x_i)\,.
\ee
We define $V(x)=+\infty$ for those $x$ on the boundary of $\R_<^N$. Hence equation \eqref{eq: dla a} can be written as
\be
	\dd \la = \dd M-\nabla V(\la)\dd t\,.
\ee
Before verifying that $\la$ is the solution to the above equation, we prove the following proposition on the continuity of $\la(t)$ and $M(t)$ with respect to $t$.  
\begin{prop}\label{prop: lacont}
	For any $\ga\in(0,1/2)$ and $\mathfrak{t}>0$, the processes $\la(t)$ and $M(t)$ are almost surely $\ga$-H{\"o}lder continuous on $[0,\mathfrak{t}]$ .	
\end{prop}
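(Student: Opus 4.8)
The plan is to establish Hölder continuity of $\la(t)$ first, then deduce it for $M(t)$ from the SDE. Since we have fixed $N$ throughout this section and all constants may depend on $N$, we only need crude bounds. First I would observe that $\la(t)$ is the vector of eigenvalues of $\tilde H(t) = V^*XV + U(t)^*YU(t) + (T-t)\hat Y$. By Weyl's inequality, for $0\le s\le t\le \mathfrak t$,
\be
\max_i |\la_i(t) - \la_i(s)| \le \norm{\tilde H(t) - \tilde H(s)} \le \norm{U(t)^*YU(t) - U(s)^*YU(s)} + (t-s)\norm{\hat Y}.
\ee
The second term is trivially Lipschitz (with constant $\norm{\hat Y} \le c_N$), and the first is bounded by $2\norm{Y}\,\norm{U(t)-U(s)}$. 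So everything reduces to showing that $t\mapsto U(t)$ is a.s. $\ga$-Hölder continuous on $[0,\mathfrak t]$ for every $\ga\in(0,1/2)$, which follows from the Kolmogorov–Chentsov continuity theorem: $U(t)$ solves \eqref{eq: du a}, an SDE with bounded coefficients (on the unitary group $\norm{U(t)}=1$ and $A, \sigma_{\alpha\beta}$ are deterministic and bounded for fixed $N$), so $\bbE\norm{U(t)-U(s)}^{2p} \le C_{N,p}|t-s|^p$ for every integer $p$ by the Burkholder–Davis–Gundy inequality applied to the martingale part together with a crude bound on the drift integral. Kolmogorov–Chentsov then gives a.s. $\ga$-Hölder continuity for all $\ga < \tfrac{p-1}{2p}$, and letting $p\to\infty$ covers every $\ga\in(0,1/2)$.

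Next I would handle $M(t)$. Since $\la(t)$ is continuous (indeed Hölder) and, by Proposition \ref{prop: tdX} together with Fubini, the eigenvalues of $\tilde H(t)$ are a.s. distinct for a.e. $t$, the singular drift term
\be
\int_0^t \frac{1}{N}\sum_{j\ne i}\frac{1-\ga_{ij}}{\la_i - \la_j}\,\d s
\ee
must be controlled; however, rather than estimating it directly I would use that $\la$ itself is continuous and the \emph{other} pieces of $\dd M_i$ are manifestly Hölder. Concretely, write $M_i(t) = \tfrac1{\sqrt N} B_i(t) - \tfrac1{\sqrt N}\int_0^t\sum_{|\alpha-\beta|<N^{\fae}} w_{\alpha i}^* \,\dd B_{\alpha\beta}\, w_{\beta i} + \int_0^t \langle a_i, (U^*\hat Y U - \hat Y)a_i\rangle\,\d s$. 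The last integral has a bounded integrand (bounded by $2\norm{\hat Y}\le c_N$), hence is Lipschitz. The two stochastic integrals are continuous local martingales whose quadratic variations are absolutely continuous with density bounded by $c_N$ (using $\sum_\alpha |w_{\alpha i}|^2 = 1$), so again BDG plus Kolmogorov–Chentsov gives $\ga$-Hölder continuity for all $\ga\in(0,1/2)$. Summing the three contributions yields the claim for $M$.

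The main obstacle is the measurability and integrability of the coefficients $w_{\alpha i}(t)$ (equivalently the eigenvectors $a_i(t)$) as processes, since these are only defined off the exceptional set where $\tilde H(t)$ has repeated eigenvalues, and a priori one does not have pathwise continuity of eigenvectors through crossings. The resolution is that the quadratic variation densities of the martingale terms depend on the $w_{\alpha i}$ only through quantities like $\sum_\alpha|w_{\alpha i}|^2=1$ and $\sum_{|\alpha-\beta|<N^{\fae}}|w_{\alpha i}|^2|w_{\beta j}|^2 = \ga_{ij}\le 1$, so one never needs continuity of $a_i$ itself — only the bound $\langle M_i\rangle_t - \langle M_i\rangle_s \le c_N(t-s)$, which holds pointwise a.s. Combined with the fact (from Proposition \ref{prop: tdX} and Fubini) that the coefficients are well-defined for a.e.\ $(t,\omega)$, this makes the stochastic integrals well-defined and the BDG estimate applicable, and the Hölder continuity then follows as above.
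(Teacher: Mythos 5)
Your proof is correct, and its skeleton is the same as the paper's: reduce the H\"older continuity of $\la(t)$ to that of $U(t)$ via Weyl's inequality and the Lipschitz dependence of $\tilde H(t)$ on $U(t)$ (plus the trivially Lipschitz $(T-t)\hat Y$ term), note that the drift part of the SDE for $U$ is Lipschitz, and then control the martingale parts using only the boundedness of their quadratic variation densities (for fixed $N$). The one genuine divergence is the final technical tool: the paper writes each martingale part, via the Dambis--Dubins--Schwarz time change, as a Brownian motion composed with a Lipschitz clock and invokes L\'evy's modulus of continuity, whereas you use BDG moment bounds $\bbE\norm{U(t)-U(s)}^{2p}\le C_{N,p}|t-s|^p$ together with Kolmogorov--Chentsov and let $p\to\infty$. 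Both are standard and both cover every $\ga<1/2$; the time-change route gives the sharp modulus $\sqrt{2h|\log h|}$ in one stroke, while your moment-criterion route is arguably more routine to set up, applies verbatim to the matrix-valued process entrywise, and handles the stochastic-integral terms of $M$ in exactly the same way as those of $U$ without exhibiting a time change for each. You also spell out why the a.e.-defined eigenvector coefficients $w_{\alpha i}$ cause no trouble (they enter the quadratic variations only through $\sum_\alpha|w_{\alpha i}|^2=1$ and $\ga_{ij}\le 1$), a point the paper passes over with ``we omit the details''; that is a worthwhile addition rather than a gap.
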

\begin{proof}
For any matrix $Q$, denote $\norm{Q}_\infty:= \max_{\alpha,\beta}\abs{Q_{\alpha\beta}}$.  By Weyl's inequality, for any $s,t\in[0,\mathfrak{t}]$, we have
	\ben
		\abs{\la(s)-\la(t)}_\infty \lesssim  \norm{\tilde{H}(s)-\tilde{H}(t)}_\infty\,.
	\ee
By definition, $\tilde{H}(t)$ is a Lipschitz function  of $U(t)$, and therefore
\be\label{eq: lip}
	\abs{\la(s)-\la(t)} \lesssim  \norm{U(s)-U(t)}_\infty\,.
\ee
Consider the $(\alpha,\beta)$-th entry of $U(t)$, which can be written as 
\ben	
	U_{\alpha\beta}(t)=\delta_{\alpha\beta}+\int_0^t \sigma(r)\dd w(r) +\int_0^t b(r)\dd r\,,
\ee
for some bounded, continuous adapted processes $\sigma(t)$, $b(t)$ and a standard complex Brownian motion $w(t)$.  The third term $\int_0^t b(r)\dd r$ is a Lipschitz function in $t$. By the time change formula (see Theorem 8.5.7 in \cite{Ok}) the martingale part, $\int_0^t \sigma(r)\dd w(r) $, can be written as a Brownian motion stopped at a certain stopping time.   To be more precise, let

\be	
	\theta(t):= \int_0^t \abs{\sigma(r)}^2 \dd r \leq C t \,,
\ee
which is a Lipschitz function in $t$. There exists a Brownian motion $\hat{W}$ with respect to a filtration $(\mathcal{G}_t)_{t\geq 0}$ such that, for each $t\geq 0$, $\theta(t)$ is a $\mathcal{G}$-stopping time and 
\ben
	\int_0^t\sigma(r)\dd w(r) = \hat{W}({\theta(t)}).
\ee
 According to L{\`e}vy's modulus of continuity theorem, 
 the Brownian motion $\hat{W}(t)$ satisfies
 \be
	 \lim_{h\to0}\sup_{0\leq t\leq T-h} \frac{\abs{\hat{W}_{t+h}-\hat{W}_{t}}}{\sqrt{2h\abs{\log h}}}=1\,,\text{ a.s.}
\ee
The martingale part of $U_{\alpha\beta}$, as the composite of $\hat{W}(\cdot)$ with a Lipschitz function $\theta(\cdot)$, satisfies
\be\label{ineq: levy}
	 \limsup_{h\to0}\sup_{0\leq t\leq T-h} \frac{\norm{U_{t+h}-U_{t}}_\infty}{\sqrt{2h\abs{\log h}}}\leq c\,,\text{ a.s.}
\ee 
Hence $U(t)$ is almost surely $\ga$-H{\"o}lder continuous for $\gamma < 1/2$.  By \eqref{eq: lip}, it follows that $\la(t)$ is almost surely $\ga$-H{\"o}lder continuous, for any $\ga\in(0,1/2)$.
 
The continuity of $M(t)$ follows from the same time-change argument above. We omit the details here.  
\end{proof}

We state and prove the main theorem of this section.  In the following, for any $\mathfrak{t}>0$, we denote the filtration generated by $\tilde{H}(0)$ and $B(t)$ by
\ben
	(\mathcal{F}_t)_{0\leq t \leq\mathfrak{t}}:=(\sigma(\tilde{H}(0),(B_s)_{0\leq s\leq t}))_{0\leq t\leq \mathfrak{t}}\,,
\ee
\begin{thm}\label{thm: strong}
	For any $\mathfrak{t}>0$, the eigenvalues $\la(t)=(\la_1(t),\cdots,\la_N(t))$ are the unique strong solution to equation \eqref{eq: dla a} on $[0,\mathfrak{t}]$, such that:
	\begin{enumerate}
		\item $\la(t)$ is adapted to the filtration $(\mathcal{F}_t)_{0\leq t \leq\mathfrak{t}}$\,.
		\item $\P\left[\la_1(t)<\cdots< \la_N(t)\,, \text{ for almost all }t\in[0,\mathfrak{t}]\,\right]=1$\,.
	\end{enumerate} 
\end{thm}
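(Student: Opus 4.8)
The plan is to first show that the drift in \eqref{eq: dla a} is a.s.\ integrable on $[0,\mathfrak{t}]$, then to verify that the eigenvalue process $\la(t)$ of $\tilde H(t)$ is a strong solution by localizing It{\^o}'s formula to the (time-)open set on which $\tilde H(t)$ has simple spectrum, and finally to prove pathwise uniqueness by an $\ell^2$ monotonicity estimate that uses only $0\le\ga_{ij}\le 1$.

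\emph{Integrability and the collision set.} Write $X=\diag\{x_k\}+\e^{-N}Q$. Fix $s$ and condition on $(V,U(s))$: then $\tilde H(s)=P+\e^{-N}\tilde Q$, where $P:=V^*\diag\{x_k\}V+U(s)^*YU(s)+(T-s)\hat Y$ is $(V,U(s))$-measurable and, by unitary invariance of the GUE, $\tilde Q:=V^*QV$ is a GUE matrix independent of $P$. Since $\norm{P}$ is bounded by a deterministic constant, Proposition \ref{prop: tdX} applied conditionally gives $\E\sum_{i\neq j}\abs{\la_i(s)-\la_j(s)}^{-1}\le c_N$ uniformly in $s\in[0,\mathfrak{t}]$. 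Integrating in $s$, and using $0\le\ga_{ij}\le1$ together with $\norm{\hat Y}\lesssim\log N$, shows that a.s.\ the full drift of \eqref{eq: dla a} lies in $L^1([0,\mathfrak{t}])$, so the equation is meaningful. By Fubini and the first assertion of Proposition \ref{prop: tdX}, a.s.\ $\tilde H(s)$ has simple spectrum for Lebesgue-a.e.\ $s$; thus the collision set $\mathcal{T}:=\{s\in[0,\mathfrak{t}]:\tilde H(s)\text{ has a repeated eigenvalue}\}$ is Lebesgue-null, which gives property (ii), and $a_i,w_{\alpha k},\ga_{ij}$ are well defined for a.e.\ $s$.

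\emph{The eigenvalue process solves the SDE.} By \eqref{eq: dtH}, $\tilde H(t)$ is a continuous matrix semimartingale with bounded coefficients. On $\mathcal{O}_\delta:=\{H:\abs{\la_i(H)-\la_j(H)}>\delta\ \forall i\neq j\}$ the eigenvalue $\la_i(H)$ and the rank-one spectral projection $\Pi_i(H)=a_i(H)a_i(H)^*$ (a contour integral of $(z-H)^{-1}$) are real-analytic, and every coefficient of \eqref{eq: dla a} is a function of $\tilde H$ through $(\la_i,\Pi_i)$ and the exogenous processes $U,B$: indeed $\dd B_i=\tr[\dd B\,U\Pi_iU^*]$, $\ga_{ij}=\sum_{\abs{\alpha-\beta}<N^{\mathfrak{a}}}(U\Pi_iU^*)_{\alpha\alpha}(U\Pi_jU^*)_{\beta\beta}$, and $\bka{a_i,(U^*\hat Y U-\hat Y)a_i}=\tr[(U^*\hat Y U-\hat Y)\Pi_i]$. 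Applying It{\^o}'s formula to $\la_i(\tilde H(t))$ on each connected component of the time-open set $S_\delta:=\{t:\tilde H(t)\in\mathcal{O}_\delta\}$ makes the formal computation of Section \ref{subsec: formal derivation} rigorous there and shows that on $S_\delta$ the increments of $\la_i$ coincide with \eqref{eq: dla a}. Since $S_\delta^c\downarrow\mathcal{T}$ as $\delta\downarrow0$ and $\mathcal{T}$ is Lebesgue-null, the drift contribution over $S_\delta^c$ tends to $0$ by dominated convergence (by the $L^1$ bound above), while the martingale contribution over $S_\delta^c$ tends to $0$ in $L^2$ because its quadratic variation is $\lesssim\abs{S_\delta^c}\to0$ (the martingale parts of $\tilde H$, of each $\la_i$, and of $M$ have bounded coefficients). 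Together with the a.s.\ continuity of $\la$ and $M$ from Proposition \ref{prop: lacont}, letting $\delta\to0$ yields that $\la(t)$ satisfies \eqref{eq: dla a} on all of $[0,\mathfrak{t}]$; adaptedness to $(\mathcal{F}_t)$ is immediate since $\la(t)$ is a continuous functional of $\tilde H(t)$, and (ii) was noted above.

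\emph{Uniqueness and main obstacle.} Let $\la,\la'$ be two solutions with $\la(0)=\la'(0)$ satisfying (i)--(ii). Since $M_i$ and $\ga_{ij}$ are exogenous (functions of $\tilde H$, not of the solution), the martingale terms cancel in the difference, so $D:=\la-\la'$ is absolutely continuous with $D(0)=0$ and, for a.e.\ $t$,
\ben
\dot D_i=-\frac1N\sum_{j\neq i}(1-\ga_{ij})\frac{D_i-D_j}{(\la_i-\la_j)(\la_i'-\la_j')}.
\ee
Symmetrizing in $i,j$ and using $\ga_{ij}\le1$ together with the fact that, by (ii), $\la_i-\la_j$ and $\la_i'-\la_j'$ both have the sign of $i-j$ for a.e.\ $t$, we obtain
\ben
\frac{\dd}{\dd t}\,\frac1N\sum_iD_i^2=-\frac1{N^2}\sum_{i\neq j}(1-\ga_{ij})\frac{(D_i-D_j)^2}{(\la_i-\la_j)(\la_i'-\la_j')}\le0.
\ee
Hence $\sum_iD_i^2$ is nonincreasing and vanishes at $t=0$, so $D\equiv0$, i.e.\ $\la=\la'$; with the existence part this proves the theorem. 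The step I expect to be the main obstacle is the It{\^o} derivation: because the effective coupling $1-\ga_{ij}$ is strictly below $1$, eigenvalue collisions cannot be excluded a priori and the usual $\beta\ge1$ well-posedness arguments do not apply, forcing one to run It{\^o} only on the time-set of simple spectrum and to control the complementary null set; this is precisely where the level-repulsion estimates of Proposition \ref{prop: tdX}, hence the $\e^{-N}Q$ regularization of $X$, are essential.
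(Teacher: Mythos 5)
Your overall architecture matches the paper's (use of the $\e^{-N}Q$ regularization through Proposition \ref{prop: tdX} for integrability of the drift and a.e.\ simplicity of the spectrum; It{\^o} applied only away from collisions; uniqueness via monotonicity), and your uniqueness argument is essentially the paper's: the paper phrases it as convexity of the potential $V(x)=-\frac1N\sum_{i<j}(1-\ga_{ij})\log(x_j-x_i)$, which is exactly your symmetrized $\ell^2$ computation using $0\le\ga_{ij}\le1$ and the ordering; that part, as well as the conditional GUE argument giving $\E\int_0^{\mathfrak t}\abs{\nabla V(\la)}\dd t<\infty$ and the nullity of the collision set, is fine.

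The genuine gap is in the existence step, precisely at the limit $\delta\to0$. After applying It{\^o} on the components of the random open set $S_\delta$, the quantity you still must control is the increment of $\la_i$ accumulated over $S_\delta^c$, i.e.\ $\la_i(t)-\la_i(0)-\sum_k(\la_i(b_k)-\la_i(a_k))$ summed over the (possibly infinitely many) excursion intervals $(a_k,b_k)$ of $S_\delta$. Your claim that "the martingale contribution over $S_\delta^c$ tends to $0$ in $L^2$ because its quadratic variation is $\lesssim\abs{S_\delta^c}$" is circular: $\la_i$ is not yet known to be a semimartingale across collisions (that is the content of the theorem), so there is no decomposition of its increments over $S_\delta^c$ into a martingale with bounded coefficients plus an integrable drift; and if instead you bound those increments via Weyl by increments of $\tilde H$, you face a sum of order $\sum_k(b_k-a_k)^{1/2}$ (or $(b_k-a_k)^{\gamma}$, $\gamma<1/2$, from H{\"o}lder continuity), which is not controlled by $\abs{S_\delta^c}=\sum_k(b_k-a_k)$ being small unless you also bound the \emph{number} of excursions. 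This is exactly what the paper supplies and your proposal omits: the quantitative level-repulsion bound $\P[\min_{i\neq j}\abs{\la_i-\la_j}\le\delta]\le c_N\delta^2$ of Proposition \ref{prop: tdX}, combined with the continuity estimate of Theorem \ref{thm: Us-Ut}, yields Lemma \ref{lem: cardI}: on a grid of mesh $1/n$ with $\delta=n^{-9/10}$ the expected number of bad grid intervals is $\lesssim n^{1/5}$, so the accumulated eigenvalue increment over bad intervals is $\lesssim n^{1/5}\cdot n^{-9/20}\to0$ and the time spent there is $\lesssim n^{-4/5}\to0$, which is what makes the dominated-convergence step for the drift and the vanishing of the remainder legitimate (along a subsequence). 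Without such a counting/level-repulsion input, the soft argument "$\mathcal T$ is Lebesgue-null, hence the bad set contributes nothing" does not close.
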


\begin{proof}
For simplicity of notation, we assume $\mathfrak{t}=1$.  The same argument applies for any $\mathfrak{t}>0$. Fixing a large number $n\geq N^2$, we divide the interval $[0,1]$ into $n$ parts of equal size:
\be
	[0,1]= \cup_{l=1}^n I_l\,,
\ee
where $I_l:= [(l-1)/n, l/n]$.  Let $\delta >0$ be an $n$-dependent parameter to be chosen. For each index $l$ and parameter $t$ we define the stopping time 
\be
	\tau_{l}(t):= \inf\{ t\in I_l: \exists 1\leq i< N,\text{ s.t. } \abs{\la_{i+1}-\la_{i}}\leq \delta\}\wedge t\,.
\ee
On the interval $[(l-1)/n,\tau_{l})$, the neighboring eigenvalues are separated from each other by a lower bound $\delta$, hence the eigenvalues are smooth functions of the matrix entries of $\tilde{H}(t)$ with bounded second derivatives.  Therefore, it is safe to apply It{\^o}'s formula to the eigenvalues and all  formal calculations are rigorous.  Therefore, we have
\be
	\la (\tau_{m})-\la(\tfrac{m-1}{n})= \int^{\tau_{m} }_{\frac{m-1}{n}} \dd M -\nabla V(\la)\dd t\,.
\ee
We add $\la (\tfrac{m}{n})-\la (\tau_{m})$ to both sides and set the time $t$ in the definition of each $\tau_m$ to be $m/n$,
\be\label{eq: diffla}
	\la (\tfrac{m}{n})-\la(\tfrac{m-1}{n})= \int^{\tau_{m}(\frac{m}{n}) }_{\frac{m-1}{n}} (\dd M -\nabla V(\la)\dd t )+\left(\la (\tfrac{m}{n})-\la(\tau_m(\tfrac{m}{n})) \right)\,.
\ee
Note that Proposition \ref{prop: tdX} implies the absolute integrability of $\nabla V(\la)$, i.e., 
\be
	\E \int_0^1 \abs{\nabla V(\la(t))} \dd t<+\infty\,.
\ee
 Also, $M$ is a semi-martingale with bounded quadratic variation. Therefore, the integral $\int_{\frac{m-1}{n}}^{\frac{m}{n}}(\dd M-\nabla V(\la(t))\dd t)$ is a.s. well-defined. We can subtract the integral $\int_{\frac{m-1}{n}}^{\frac{m}{n}}\left(\dd M-\nabla V(\la)\dd t\right)$ from both sides of \eqref{eq: diffla} to get,
\be\label{eq: diffla1}
	\begin{split}
&\la (\tfrac{m}{n})-\la(\tfrac{m-1}{n})-\int_{\frac{m-1}{n}}^{\frac{m}{n}}\left(\dd M-\nabla V(\la)\dd t\right)\\&= -\int_{\tau_m(\frac{m}{n}) }^{\frac{m}{n}} (\dd M -\nabla V(\la)\dd t) +\left(\la (\tfrac{m}{n})-\la(\tau_m(\tfrac{m}{n})) \right)\,.
\end{split}
\ee
Note that the right hand side vanishes if $\tau_m(\tfrac{m}{n}) = \tfrac{m}{n}$. Intuitively, because of the repulsive nature of eigenvalues, $\tau_m(\tfrac{m}{n}) = \tfrac{m}{n}$ holds with high probability, which should give some a-priori control over the amount of time that the eigenvalues spend close together. 
 In order to rigorously justify this heuristic,  we define a stochastic index set $\mathcal{I}\subset \{1,\cdots,n\}$ by
\be\label{def: I}
	\mathcal{I}:= \{ m: \tau_m(m/n)< m/n\}\,.
\ee
The meaning of $\mathcal{I}$ is that, for each $m\in \mathcal{I}$, the smallest gap between neighbouring eigenvalues hits $\delta$ 
on the time interval $[\tfrac{m-1}{n}, \tfrac{m}{n})$.  If we fix any $1\leq l\leq N$ and sum \eqref{eq: diffla1} over $m\leq l$, we obtain 
\be\label{eq: lal}
	\begin{split}
	&\la(\tfrac{l}{n})-\la(0)-\int_0^{\tfrac{l}{n}}(\dd M -\nabla V(\la)\dd t )\\=& -\sum_{m\in\mathcal{I},m\leq l} \int^{\frac{m}{n}}_{\tau_m(\tfrac{m}{n})}\dd M -\nabla V(\la)\dd t +\sum_{m\in\mathcal{I},m\leq l}\left(\la_i (\tfrac{m}{n})-\la_i(\tau_m(\tfrac{m}{n}))\right)\,.
	\end{split}
\ee
We want to show that the left hand side is always $0$, so denote
\be\label{def: mut}
	\mu(t):= \la(t)-\la(0)-\int_0^{t}(\dd M -\nabla V(\la)\dd s)\,.
\ee
 We shall prove that $\mu\equiv 0$ almost surely. Equation \eqref{eq: lal} yields
\be
	 \mu(\tfrac{l}{n})=\sum_{m\in\mathcal{I},m\leq l} \int^{\frac{m}{n}}_{\tau_m(\tfrac{m}{n})}\dd M -\nabla V(\la)\dd t +\sum_{m\in\mathcal{I},m\leq l}\left(\la_i (\tfrac{m}{n})-\la_i(\tau_l(\tfrac{m}{n}))\right)\,.
\ee
Therefore, taking the supremum over all $1\leq l\leq n$, 
\be
	\begin{split}
	\sup_{1\leq l\leq n}\abs{\mu(\tfrac{l}{n}) }\leq& \abs{\mathcal{I}}\sup_{\abs{s-t}\leq n^{-1}} \left[\abs{M(s)-M(t)} +\abs{\la(s)-\la(t)}\right] \\ &+ \sum_{m\in\mathcal{I}} \int_{I_m}\abs{\nabla V(\la)}\dd t\,.
	\end{split}
\ee
In view of the fact that $M$ and $\la$ are almost surely $\ga$-H{\"o}lder continuous for all $\ga<1/2$ (see Proposition \ref{prop: lacont}), we have, almost surely,
\be\label{eq: diffla2}
\begin{split}
	\sup_{1\leq l\leq n}\abs{\mu(\tfrac{l}{n}) }\leq& \abs{\mathcal{I}}  n^{-9/20}+ \sum_{m\in\mathcal{I}} \int_{I_m}\abs{\nabla V(\la)}\dd t\,,
\end{split}
\ee
when $n$ is large enough (here the `large enough' is random but this will not play a role in what follows). We now make the choice $$\delta=n^{-9/10}\,.$$ For the first term on the right hand side of \eqref{eq: diffla2}, we have by Lemma \ref{lem: cardI} below that $\E \abs{\mathcal{I}}n^{-9/20}\to0$ as $n\to \infty$. Therefore, $\abs{\mathcal{I}}n^{-9/20}\to0$ almost surely (along a subsequence).  For the second term on the right hand side of \eqref{eq: diffla2}, note that
\be
	 \sum_{m\in\mathcal{I}} \int_{I_m}\abs{\nabla V(\la)}\dd t = \int_{A_n} \abs{\nabla V(\la)}\dd t \,.
\ee
Here $A_n$ is a random subset of $[0,1]$ whose Lebesgue measure equals $\abs{\mathcal{I}}/n$.  Again, Lemma \ref{lem: cardI} implies that the Lebesgue measure  of $A_n$ goes to $0$ almost surely along (the same as above) subsequence. It follows from the absolute integrability of $\absa{\nabla V(\la)}$ that, 
along this subsequence, the second term on the right hand side of \eqref{eq: diffla2} goes to $0$, almost surely. Therefore, there is a subsequence $(n_k)\subset{\N}$ such that
\be
	\lim_{k\to \infty} \sup_{1\leq l \leq n_k} \abs{\mu(\tfrac{l}{n_k})} =0\,,\quad \text{a.s.}
\ee
In view of the H{\"o}lder continuity of $\mu(t)$, we have almost surely
\be
	\mu(t)=0\,, \forall t\in[0,1]\,.
\ee
By definition \eqref{def: mut} of $\mu$, we know that
\be
	\la(t)= \la(0)+\int_0^t (\dd M -\nabla V(\la)\dd t)\,.
\ee
This implies that $\la(t)$ is a strong solution to \eqref{eq: dla a}.  The fact that 
\be	
	\P\{ \la_1(t)<\cdots<\la_N(t)\,,\text{ for almost all } t\in[0,1]\}=1
\ee
follows immediately from the following estimate (see Proposition \ref{prop: tdX}),
\be
	\E \int_0^t \sum_{i\neq j}\frac{1}{\abs{\la_i-\la_j}}\leq c_N\,.
\ee

Finally it remains to prove uniqueness of the strong solution.  Suppose there is another strong solution $\hat{\la}(t)$ almost surely satisfying $\hat{\la}_1(t)\leq \cdots \leq \hat{\la}_N(t)$ for all $t\in[0,1]$ .  We have an equation for the difference $\la(t)-\hat{\la}(t)$:
\be
	\dd (\la(t)-\hat{\la}(t)) =\left( -\nabla V(\la(t))+\nabla V(\hat{\la}(t))\right) \dd t\,.
\ee
The square of Euclidean norm of $\la(t)-\hat{\la}(t)$ satisfies
\be
	\frac{\dd}{\dd t} \abs{\la(t)-\hat{\la}(t)}^2 = -\left\langle \nabla V(\la(t))-\nabla V(\hat{\la}(t)), \la(t)-\hat{\la}(t)\right\rangle \,.
\ee
The right hand side is non-positive, because $V$ is a convex function.  Hence, $\abs{\la(t)-\hat{\la}(t)}^2\leq \abs{\la(0)-\hat{\la}(0)}^2=0$ for all $t\in[0,1]$, which implies that $\la(t)=\hat\la(t)$ for all $t\geq 0$.
 
\end{proof}

In the proof above, we used the following lemma that gives a bound on the cardinality of $\mathcal{I}$ (see \eqref{def: I}). For the reader's convenience we restate the definition of $\mathcal{I}$ in the lemma.

\begin{lem}\label{lem: cardI}
	Let $\delta= n^{-9/10}$, $n\geq N^2$.  Let
	\ben
		\mathcal{I}:= \{i: \exists t\in [(i-1)/n,i/n)\,,\exists 1\leq k< N, \text{ s.t. } \abs{\la_k(t)-\la_{k+1}(t)} \leq \delta\}\,.
	\ee
	Then, for $n$ large enough,
	\ben
		\E \abs{\mathcal{I}} \leq cn^{1/5}\,.
	\ee
\end{lem}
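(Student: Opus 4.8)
\emph{Proof plan.} We estimate $\E|\mathcal{I}| = \sum_{i=1}^{n} \P[i \in \mathcal{I}]$ by bounding, for each $i$, the probability that some gap of $\tilde H(t)$ drops below $\delta$ somewhere on $I_i := [(i-1)/n, i/n)$. The difficulty is that the typical displacement of the eigenvalues over a time interval of length $1/n$ is of order $n^{-9/20}$ by Theorem \ref{thm: Us-Ut}, which is much larger than $\delta = n^{-9/10}$; hence a rigidity statement at the two endpoints of $I_i$ alone carries no information. To get around this we place inside each $I_i$ a fine sub-mesh
\[
 t_0 := \tfrac{i-1}{n} = s_0 < s_1 < \cdots < s_{K_n} = \tfrac{i}{n}, \qquad s_{l} - s_{l-1} = \vartheta := c_0 n^{-2},
\]
with $c_0$ a small constant depending only on $K$; then $K_n \asymp n$, and $\vartheta < 1/N$ once $n \geq N^2$ and $N$ is large, so that Theorem \ref{thm: Us-Ut} (second estimate) applies on each sub-interval.

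For the oscillation step, observe first that by Weyl's inequality and the decomposition $\tilde H(t) = V^*XV + U(t)^*YU(t) + (T-t)\hat{Y}$ (the map $U \mapsto U^*YU$ is $2\|Y\|$-Lipschitz, and $t \mapsto (T-t)\hat{Y}$ is $\|\hat{Y}\| \lesssim \log N$-Lipschitz in $t$ by \eqref{hatY bound}),
\[
\| \lambda(s) - \lambda(s') \|_\infty \;\lesssim\; \| U(s) - U(s') \| + |s - s'| \log N .
\]
Since $\|U(s) - U(s')\| = \|\hat U(s) - \hat U(s')\|$ for $\hat U(\cdot) := U(\cdot) U(s')^{*}$, the second estimate of Theorem \ref{thm: Us-Ut} with $\vartheta = c_0 n^{-2}$ shows that on each sub-interval $[s_{l-1}, s_l]$ one has $\sup \|U(s) - U(s')\| \leq 2(c_0 n^{-2})^{9/20} = 2c_0^{9/20}\delta$ outside an event of probability at most $\e^{-c_N n^{1/5}}$. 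Taking a union bound over the $\asymp n^2$ sub-intervals covering $[0,1]$ and choosing $c_0$ small, we find that with probability $1 - O(n^2 \e^{-c_N n^{1/5}})$ the bound $\sup_{|s-s'|\leq 2\vartheta}\|\lambda(s) - \lambda(s')\|_\infty < \delta/2$ holds simultaneously on all sub-intervals. On this event, if $|\lambda_k(t) - \lambda_{k+1}(t)| \leq \delta$ for some $t \in I_i$, then taking $s_j$ to be the mesh point nearest to $t$ (so $t$ and $s_j$ lie in adjacent sub-intervals) gives $\min_{l \neq l'} |\lambda_l(s_j) - \lambda_{l'}(s_j)| \leq 2\delta$; hence
\[
\P[i \in \mathcal{I}] \;\leq\; O\!\left( n\, \e^{-c_N n^{1/5}} \right) + \sum_{j=0}^{K_n} \P\!\left[ \min_{l \neq l'} |\lambda_l(s_j) - \lambda_{l'}(s_j)| \leq 2\delta \right].
\]

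It remains to bound each summand, which is where Proposition \ref{prop: tdX} enters. At the fixed time $s_j$, write $\tilde H(s_j) = P(s_j) + \e^{-N}\tilde Q$ with $P(s_j) := V^*\diag\{x_k\}V + U(s_j)^*YU(s_j) + (T-s_j)\hat{Y}$ and $\tilde Q := V^*QV$; conditionally on $(V, U(s_j))$, the matrix $\tilde Q$ has the law \eqref{def: pQ} (by unitary, resp. orthogonal, invariance of that density) and is independent of $P(s_j)$, while $\|P(s_j)\| \leq 2K+1$ deterministically for $N$ large. Consequently $\psi(N, P(s_j)) \leq \exp\!\big( \e^{2N} N (2K+1)^2 \big) =: \Psi_N$ uniformly in $j$, and Proposition \ref{prop: tdX} (used with the $\delta^2$ bound, valid in both symmetry classes) gives $\P[\min_{l\neq l'}|\lambda_l(s_j) - \lambda_{l'}(s_j)| \leq 2\delta] \leq c_N \Psi_N (2\delta)^2 \leq c_N \delta^2$. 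Summing over the $K_n \asymp n$ mesh points and then over $1 \leq i \leq n$, and recalling $\delta = n^{-9/10}$,
\[
\E|\mathcal{I}| \;\leq\; O\!\left( n^2\, \e^{-c_N n^{1/5}} \right) + c_N\, n \cdot n \cdot n^{-9/5} \;=\; O\!\left( n^2 \e^{-c_N n^{1/5}}\right) + c_N n^{1/5} \;\leq\; c\, n^{1/5}
\]
for $n$ large (the constant $c$ may depend on $N$, as is permitted in this section). The only genuinely delicate point is the oscillation step: the sub-mesh must be taken at the scale $\vartheta \asymp n^{-2}$ precisely so that the eigenvalue motion $\asymp \vartheta^{9/20}$ over one sub-interval matches the resolution $\delta$, while keeping the number $\asymp n^2$ of sub-intervals small enough that the exponential tail in Theorem \ref{thm: Us-Ut} survives the union bound; everything else is a direct application of Proposition \ref{prop: tdX} and Theorem \ref{thm: Us-Ut}.
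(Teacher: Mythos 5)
Your proof is correct and follows essentially the same route as the paper: a sub-mesh of spacing $\asymp n^{-2}$ inside each interval, the fixed-time minimal-gap bound from Proposition \ref{prop: tdX} at the mesh points, and the second estimate of Theorem \ref{thm: Us-Ut} to control the eigenvalue oscillation between mesh points, followed by a union bound and summation over the $\asymp n^{2}$ sub-intervals. Your explicit conditioning on $(V,U(s_j))$ to justify applying Proposition \ref{prop: tdX} to $\tilde{H}(s_j)$ merely spells out a step the paper leaves implicit.
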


\begin{proof}
	Observe that $\mathcal{I}=\sum_{l=1}^{n}\ind{l\in\mathcal{I}}$, so
	\be\label{eq: EI}
	\E \abs{\mathcal{I}} =\sum_{l=1}^n \E \ind{l\in\mathcal{I}} = \sum_{l=1}^n \P[{l\in\mathcal{I}}] \,.
	\ee
	For every fixed $1\leq l\leq n$, we compute the probability of the event $\{l\in\mathcal{I}\}$. We choose $n$ equally spaced points $t_0<\cdots<t_{n-1}$ on the interval $[(l-1)/n,l/n)$, given by
	\ben
	  t_j:= \frac{l-1}{n}+\frac{j}{n^2 }\,.
	\ee
	For each sample path $\om\in\{l\in\mathcal{I}\}$, either the smallest gap between $\la_1(t_j),\cdots,\la_N(t_j)$ gets smaller than $3\delta$ for some $j$, or the path vibrates dramatically near some $t_j$. To be precise, we have
	\ben
		\begin{split}
		\{l\in&\mathcal{I}\}\subset \left(\cup_j \{ \exists 1\leq k< N, \text{ s.t. } \abs{\la_k(t_j)-\la_{k+1}(t_j)} \leq 3\delta  \} \right)\\
		&\cup (\cup_j\{ \exists t \in [t_{j-1},t_j) \,, \exists 1\leq k\leq N, \text{ s.t. } \,, \abs{\la_k(t)-\la_k(t_{j-1})} \geq \delta \})\\
		&=: (\cup_j \mathcal{A}_{lj}) \cup (\cup_j\mathcal{B}_{lj})\,.
		\end{split}
	\ee
	According to Proposition \ref{prop: tdX}, we have $\P[\mathcal{A}_{lj}] \leq c\delta^2$, therefore
	\be\label{ineq: Pl}
		\P[l\in\mathcal{I}] \leq \sum_{ 1\leq j\leq n} \P[\mathcal{A}_{lj}]+\sum_{ 1\leq j\leq n}\P[\mathcal{B}_{lj}]\leq cn\delta^2+\sum_{ 1\leq j\leq n}\P[\mathcal{B}_{lj}]\,.
	\ee
	Now we look at $\P[\mathcal{B}_{lj}]$.  Since the eigenvalues of a Hermitian matrix are Lipschitz functions of the matrix entries,  and according to the definition of $\tilde{H}(t)$, we have
	\ben
		\mathcal{B}_{lj} \subset \{ \exists t \in  [t_{j-1},t_j)  \text{, s.t. }  \norm{U(t)-U(t_{j-1})} \geq c\delta \}\,.
	\ee
	Setting $\vartheta=1/n^2$ in the second estimate of Theorem \ref{thm: Us-Ut}, we see that $\P[\mathcal{B}_{lj}]\leq \e^{-cn^{1/5}}$.
	Therefore, \eqref{ineq: Pl} yields
	\ben
		\P[l\in\mathcal{I}] \leq cn^{-4/5}+n\e^{-cn^{1/20}}\leq c_1n^{-4/5}\,.
	\ee
	Then \eqref{eq: EI} implies
	\ben
		\E \abs{\mathcal{I}}\leq n\cdot c_1n^{-4/5} \leq c_1n^{1/5}\,.
	\ee
\end{proof}

\bibliography{FreeConv.bib}
\bibliographystyle{abbrv}
\end{document}